\documentclass[12pt]{article}
\usepackage[margin=2.5 cm]{geometry}
\usepackage{natbib}
\usepackage{amsmath}
\usepackage{amssymb}
\usepackage[titletoc]{appendix}

\usepackage{cancel}
\usepackage{graphicx}
\usepackage{latexsym}

\usepackage{pdflscape}

\usepackage{mathrsfs}
\usepackage{mathtools}
\usepackage{subfig}
\usepackage{thmtools,thm-restate}
\usepackage{appendix}
\usepackage{mathrsfs}

\usepackage{enumerate}
\usepackage{mathtools}
\usepackage{graphicx}
\usepackage{setspace}
\usepackage{mdframed}
\usepackage{enumitem, color, amssymb}
\usepackage{multicol}
\usepackage{subfig}

\usepackage{subfig}
\usepackage{wrapfig}
\usepackage{tikz}
\usetikzlibrary{shapes.geometric}
\usetikzlibrary{decorations.pathreplacing}
\usetikzlibrary{positioning}

\definecolor{presentBlue}{RGB}{51,51,178}
\definecolor{baseBlue}{RGB}{0,77,77}
\definecolor{varBlue1}{RGB}{31,118,118}
\definecolor{varBlue2}{RGB}{13,97,97}
\definecolor{varBlue3}{RGB}{0,54,54}
\definecolor{varBlue1}{RGB}{0,30,30}

\definecolor{presentGreen}{RGB}{0,102,0}
\definecolor{varGreen1}{RGB}{41,158,41}
\definecolor{varGreen2}{RGB}{18,129,18}
\definecolor{varGreen3}{RGB}{0,72,0}
\definecolor{varGreen4}{RGB}{0,102,0}

\definecolor{baseRed}{RGB}{128,0,0}
\definecolor{varRed1}{RGB}{197,51,51}
\definecolor{varRed2}{RGB}{161,22,22}
\definecolor{varRed3}{RGB}{90,0,0}
\definecolor{varRed4}{RGB}{50,0,0}

\definecolor{presentGray}{RGB}{63,63,63}


\usepackage{pf2}

\usepackage{subfig}
\usepackage{wrapfig}

\usepackage{epstopdf}
\usepackage[T1]{fontenc}
\usepackage{tabu}


\usepackage[T1]{fontenc}
\usepackage[]{algorithm2e} 
\parindent=0.3in

\newcommand{\wh}[1]{\widehat{#1}}

\newcommand{\ov}[1]{\overline{#1}}
\newcommand{\mb}[1]{\mathbf{#1}}

\newcommand{\mbU}{\mb{U}}
\newcommand{\mbV}{\mb{V}}

\newcommand{\argmin}{\mathrm{argmin}}

\newcommand{\natD}{\mb{d}_{\natural}}

\newcommand{\natmbU}{\natural(\mbU)}
\newcommand{\natmbV}{\natural(\mbV)}
\newcommand{\RO}{\mb{RO}}

\mathchardef\mhyphen="2D

\newcommand{\cl}{\mathrm{cl}}
\newcommand{\interior}{\mathrm{int}}
\newcommand{\bd}{\mathrm{bd}}
\newcommand{\ext}{\mathrm{ext}}
\newcommand{\opt}{\scriptstyle \mathrm{opt}}

\newcommand{\seq}[3]{ \left\{{#1}\right\}_{#2}^{#3} }

\mathchardef\mhyphen="2D

\newtheorem{theorem}{Theorem}[section]
\newtheorem{lemma}[theorem]{Lemma}
\newtheorem{remark}[theorem]{Remark}

\newtheorem{definition}[theorem]{Definition}
\newtheorem{example}[theorem]{Example}

\usepackage{graphicx}
\usepackage{dcolumn}
\usepackage{bm}

\usepackage{titlesec}

\titleformat*{\section}{\large\bfseries}
\titleformat*{\subsection}{\large\bfseries}
\titleformat*{\subsubsection}{\normalsize\bfseries}
\titleformat*{\paragraph}{\normalsize\bfseries}
\titleformat*{\subparagraph}{\normalsize\bfseries}

\usepackage{tikz}
\usetikzlibrary{shapes.geometric}
\usetikzlibrary{decorations.pathreplacing}
\usetikzlibrary{positioning}

\definecolor{presentBlue}{RGB}{51,51,178}
\definecolor{baseBlue}{RGB}{0,77,77}
\definecolor{varBlue1}{RGB}{31,118,118}
\definecolor{varBlue2}{RGB}{13,97,97}
\definecolor{varBlue3}{RGB}{0,54,54}
\definecolor{varBlue4}{RGB}{0,30,30}

\definecolor{presentGreen}{RGB}{0,102,0}
\definecolor{varGreen1}{RGB}{41,158,41}
\definecolor{varGreen2}{RGB}{18,129,18}
\definecolor{varGreen3}{RGB}{0,72,0}
\definecolor{varGreen4}{RGB}{0,102,0}

\definecolor{baseRed}{RGB}{128,0,0}
\definecolor{varRed1}{RGB}{197,51,51}
\definecolor{varRed2}{RGB}{161,22,22}
\definecolor{varRed3}{RGB}{90,0,0}
\definecolor{varRed1}{RGB}{50,0,0}

\definecolor{presentGray}{RGB}{63,63,63}

\title{Stability and Continuity in Robust Linear and Linear Semi-Infinite Optimization}
\author{Timothy C.Y. Chan, University of Toronto \and Philip Allen Mar\thanks{Corresponding author, email: philip.mar@mail.utoronto.ca}, University of Toronto}

\begin{document}

\maketitle

\begin{abstract}%
We consider the stability of Robust Optimization problems with respect to perturbations in their uncertainty sets. We focus on Linear Optimization problems, including those with a possibly infinite number of constraints, also known as Linear Semi-Infinite Optimization (LSIO) problems, and consider uncertainty in both the cost function and constraints. We prove Lipschitz continuity of the optimal value and $\epsilon$-approximate optimal solution set with respect to the Hausdorff distance between uncertainty sets and with an explicit Lipschitz constant that can be calculated. In addition, we prove closedness and upper semi-continuity for the optimal solution set mapping with respect to the uncertainty set.

\end{abstract}%

\section{Introduction}

The theory and applications of Robust Optimization (RO) have grown rapidly over the past two decades. However, one important theoretical property of optimization problems, \emph{stability}, has been largely unexplored in RO with a few notable exceptions. The stability of the optimal value and the optimal solution set, with respect to certain problem parameters, are examples of the types of stability that are of particular interest in optimization problems. For RO problems, one natural quantity that stability can be defined with respect to is the \emph{uncertainty set}, which is our focus in this paper.


Before proceeding further, we first make a distinction between what we call \emph{quantitative} and \emph{qualitative} stability.  Quantitative stability properties are those that are endowed with an explicit constant that describes the degree of stability.  One such example is the Lipschitz continuity of the optimal value with some constant $L$, which bounds the change in the optimal value by the product of $L$ and some amount of perturbation in the parameters. On the other hand, qualitative stability properties are those that do not furnish an explicit constant measuring the degree of stability.  One example is upper semi-continuity of the optimal solution set, which describes the limiting behavior of the optimal solution set without providing any information about the rate of convergence.

Some RO research has considered stability properties through either a sensitivity analysis or problem well-posedness viewpoint. \citet{RobustConvexOpt} proved a bound on the difference in the optimal value between a nominal problem and its robust counterpart, with the bound depending on the specific uncertainty set in the robust problem. \citet{el1998robust} proved H{\" o}lder continuity of the approximate optimal solution set in semidefinite programming with respect to perturbations in the linear matrix inequality coefficients. More recently, \citet{crespi2015quasiconvexity} derived closedness properties for the optimal solution set of a robust vector optimization problem. In these papers, the stability results are with respect to a fixed uncertainty set.


Several recent papers have proved convergence properties of RO problems with respect to changes in the uncertainty set.  In particular, it was shown that the optimal solution set (and value) of the robust problem converged to the optimal solution set (and value) of the nominal problem as a polyhedral uncertainty set converged to a singleton in robust linear programming (\citet{AdaptiveRobustIMRT}). An analogous result was shown for the case of general convex uncertainty sets in robust conic programming (\citet{werner2010consistency}). These results have been extended to the case where a regularized convex uncertainty set converges to a non-singleton set in robust quadratic programming (\citet{moazeni2013regularized}).

Stability has been well-studied in the related field of Stochastic Programming (see \citet{schultz2000some} for an extensive literature review). In particular, there exist results regarding Lipschitz continuity of the optimal value (\citet{liu2013stability}), of the optimal solution sets (\citet{romisch1996lipschitz,shapiro1994quantitative,liu2013stability}) and approximate optimal solution sets (\citet{romisch2007stability}), with respect to the probability measure. Additionally, recent research in Distributionally Robust Stochastic Programming (DRSP) or Distributionally Robust Optimization (DRO) stability include convergence of optimal values (and optimal solution sets) as the ambiguity sets converge, focusing primarily on uncertainty in the cost function (\citet{sun2015convergence,DistributionalInterpretation,dupavcova2011uncertainties}). Quantitative convergence of the optimal value has also been established for a two-stage DRSP problem, including uncertainty in the constraints (\citet{riis2005applying}).


Stability has also been studied in Linear Semi-Infinite Optimization (LSIO) problems, which include the class of robust convex optimization problems (\citet{RobustConvexOpt}). In particular, \citet{canovas2006lipschitz} showed Lipschitz continuity of the optimal value while \citet{canovas1999stability} showed convergence of the optimal solution set. Even though RO problems can be viewed as LSIO problems, the above-mentioned properties do not directly yield stability properties for RO due to the difference in their problem structures. In these LSIO results, the LSIO problems have constraints that are indexed relative to a pre-defined and common index set.  On the other hand, RO problems have constraints that are typically defined by an uncertainty set, which may not constitute a common index set with other RO problems. However, as we will show, stability properties for RO can be established by taking advantage of the lack of explicit indexing in RO constraints, leveraging existing LSIO results.



In this paper, we present a comprehensive study of stability, including both quantitative and qualitative properties, in robust linear optimization with respect to the uncertainty set.  In particular, we provide quantitative stability results for the optimal value and $\epsilon$-approximate optimal solution set (hereafter abbreviated as $\epsilon$-optimal solution set), and qualitative stability results for the optimal solution set. Our results hold for uncertainty in both the cost function and constraints. Our results also extend to the case where the nominal problem is a LSIO problem itself, and thus, we establish novel stability results for robust LSIO. Because of the connection to robust LSIO, our stability results serve as a stepping stone to understanding stability in a general class of robust convex optimization problems. In addition to minor technical assumptions, the primary assumptions we make about the underlying RO problem are that its uncertainty set is convex and compact, that the strong Slater condition holds, and that its optimal solution set is non-empty and bounded.



The specific contributions of this paper are:


\begin{enumerate}
\item \textbf{Lipschitz continuity of the optimal value}. This is the first quantitative stability result for the optimal value in RO and robust LSIO with uncertainty in the constraints, providing an explicit Lipschitz constant that can be calculated from the problem parameters. The quantitative nature of this result contrasts with the results for the optimal value given in previous papers (\citet{AdaptiveRobustIMRT,werner2010consistency,sun2015convergence}).

\item \textbf{Closedness and upper semi-continuity of the optimal solution set}. Under the above outlined assumptions, our results relax restrictions such as the type of uncertainty set considered (\citet{AdaptiveRobustIMRT,moazeni2013regularized}) and the specific form or limit of its convergence (\citet{AdaptiveRobustIMRT,werner2010consistency}) that are found in previous papers. Furthermore, our results hold for both uncertainty in the constraint set and cost function, as compared with the results in previous papers (\citet{moazeni2013regularized,sun2015convergence}) which only include uncertainty in the cost function. 



\item \textbf{Lipschitz continuity of the $\epsilon$-optimal solution set}. This is the first quantitative stability result for the $\epsilon$-optimal solution set in RO and robust LSIO, for which not even qualitative stability results have been established yet. Our proof leverages a similar result from variational analysis (\citet{rockafellar2009variational}).
\end{enumerate}

The rest of the paper is organized as follows. In Section~\ref{sec:Prelim}, we review relevant background and introduce our notation. The main stability results we prove require two things: first, a mapping between RO and LSIO problems, which we develop in Section~\ref{sec:RO_LSIO_Transform}, and second, a guarantee of invariance of the Lipschitz constant for equivalent LSIO problems, which we prove in Section~\ref{sec:Lipschitz_constant_invariance}.  Finally, we prove the desired RO and robust LSIO stability results of Lipschitz continuity of the optimal value in Section~\ref{sec:Lipschitz_continuity_optimal_value}, set convergence of the optimal solution set in Section~\ref{sec:Convergence_results_optimal_set}, and Lipschitz continuity of the $\epsilon$-optimal solution set in Section~\ref{sec:approx_optimal_set}. The Appendix contains helpful technical results. While some of these results have been used implicitly in other papers such as \citet{canovas2006lipschitz}, we include them here with proofs for ease of reference.




\section{Preliminaries}\label{sec:Prelim}


\subsection{Notation}

We write $\interior(A)$, $\bd(A)$, $\cl(A)$, and $\ext(A)$ to denote the interior, boundary, closure, and exterior of a set $A$. The appropriate topology or metric space for these definitions will be clear in context. We define $\mbox{conv}(A)$ and $\mbox{cone}(A)$ as the convex hull and conical hull
of $A$, respectively. Given a metric $d$ on a metric space $X$, we define the distance between a point $x$ and a set $A\subseteq X$ as $d(x,A):=\inf_{y\in A}d(x,y)$. If $X$ is additionally a normed vector space with norm $\|\cdot\|$ and $d$ is associated with the norm $\|\cdot\|$, we can also analogously denote $d_{*}$ as the dual norm of $d$. Also, let $\chi_{A}(x)$ denote the characteristic function with $\chi_{A}(x)=0$ if $x\in A$ and $\chi_{A}(x)=+\infty$ otherwise. Let $B(x_{0},\epsilon)$ denote the ball in the appropriate metric space of radius $\epsilon$ centred at $x_{0}$. Let $B:=B(0,1)$ denote the unit ball.

\subsection{Linear Semi-Infinite Optimization Problems}

In this section, we present the LSIO concepts that are most relevant to our development. Much of the concepts and notation are adapted from \citet{canovas2006lipschitz} and related papers (\citet{canovas2005distance,canovas2006ill,canovas1999stability,lopez2012stability}).

\subsubsection{Formulation and Related Quantities}
A Linear Semi-Infinite Optimization problem takes the form:
\begin{equation}\label{eq:LSIO}
\begin{alignedat}{3}
\underset{x\in \mathbb{R}^{n}}{\inf} & \quad \langle c, x\rangle\\
\mathrm{subject \hspace{1mm} to}        & \quad \langle a_{t}, x\rangle \geq b_{t}, &&\quad  \forall t\in T,
\end{alignedat}
\end{equation}
%
where the $\langle \cdot , \cdot \rangle$ denotes the dot product in $\mathbb{R}^{n}$. Here, $c$ is the cost function, $a_{t}$ are the coefficient vectors indexed by $t$, and $b_{t}$ are the right-hand side values. The set $T$ is possibly uncountably infinite, with no assumed topological structure. It is helpful to think of the coefficients and right-hand sides as functions, namely, $a : T\to \mathbb{R}^{n}$ with $a : t \mapsto a_{t}$ and $b: T\to \mathbb{R}$ with $b:t \mapsto b_{t}$.

We denote $\pi:=(c,\sigma)$ as the tuple that uniquely defines a LSIO problem of the form \eqref{eq:LSIO}. Further, we write $\sigma:=(a,b)$ as the constraint system of $\pi$, that is, $\pi:=(c,(a,b))=(c,\sigma)$. Different LSIO problems will be indexed by $j$ and written $\pi_{j}=(c^{j},(a^{j},b^{j}))=(c^{j},\sigma_{j})$. We denote $\Pi:=\mathbb{R}^{n}\times \Sigma$ as the collection of all LSIO problems, where $\Sigma:= (\mathbb{R}^{n}\times \mathbb{R})^{T}$ is the collection of all constraint systems (both feasible and infeasible).

A problem $\pi = (c,\sigma)$ has a feasible solution set, an optimal value, an optimal solution set and, for some given $\epsilon>0$, an $\epsilon$-approximate\footnote{We will write `$\epsilon$-optimal solution set' going forward.} optimal solution set, which are denoted as:
\begin{eqnarray*}
F(\sigma) &:=& \{ x\in \mathbb{R}^{n} \mbox{ : } \langle a_{t}, x \rangle \geq b_{t}, \mbox{ } \forall t\in T\}, \\
\nu(\pi) &:=& \inf_{x\in\mathbb{R}^{n}} \{ \langle c, x \rangle \mbox{ : } x \in F(\sigma) \},\\
F^{\opt}(\pi) &:=& \{ x \in F(\sigma) \mbox{ : } \langle c , x \rangle = \nu(\pi) \},\\
F^{\epsilon\mhyphen\opt}(\pi) &:=& \{ x \in F(\sigma) \mbox{ : } \langle c , x \rangle \leq \nu(\pi) +\epsilon \}.
\end{eqnarray*}
For completeness, define $\nu(\pi)=+\infty$ if $F(\sigma)=\varnothing$.

%

Lastly, LSIO problems of the form \eqref{eq:LSIO} satisfy the strong Slater condition with strong Slater constant $\rho$, if there exists $x_{0}\in\mathbb{R}^{n}$ and $\rho>0$ such that $\langle a_{t}, x_{0}\rangle \geq b_{t} + \rho$ for all $t\in T$.

\subsubsection{Distance Metrics}\label{subsub:distance_metrics}

Let $\|\cdot\|$ denote the Euclidean norm in the context-appropriate dimension. Let two constraint systems $\sigma_{1}$ and $\sigma_{2}$ be given. For a fixed $t$, let $(a_{t}^{1},b_{t}^{1})\in \mathbb{R}^{n+1}$ and $(a_{t}^{2},b_{t}^{2})\in \mathbb{R}^{n+1}$ be the $t$-th constraint of $\sigma_{1}$ and $\sigma_{2}$, respectively. Let $d((a^{1}_{t},b^{1}_{t}), (a^{2}_{t},b^{2}_{t}))$ denote the distance between these constraints:
\begin{equation*}\label{eq:small_euclidean_metric}
d((a^{1}_{t},b^{1}_{t}), (a^{2}_{t},b^{2}_{t})):=  \left\| \left(\begin{array}{c} a^{1}_{t} \\ b^{1}_{t} \end{array} \right) - \left(\begin{array}{c} a^{2}_{t} \\ b^{2}_{t} \end{array} \right) \right\|.
\end{equation*}
Let $\delta^{\Sigma}(\sigma_{1},\sigma_{2})$ denote the distance between constraint systems $\sigma_{1}$ and $\sigma_{2}$:
\begin{equation*}\label{eq:sigma_metric}
\delta^{\Sigma}(\sigma_{1},\sigma_{2}) := \sup_{t\in T} d((a^{1}_{t},b^{1}_{t}), (a^{2}_{t},b^{2}_{t})).
\end{equation*}
Also, let $\delta^{\Pi}(\pi_{1},\pi_{2})$ denote the distance between problems $\pi_{1}:=(c^{1},\sigma_{1})$ and $\pi_{2}:=(c^{2},\sigma_{2})$:
\begin{equation*}
\delta^{\Pi}(\pi_{1},\pi_{2}) := \max\{ \|c^{1}-c^{2}\|, \delta^{\Sigma}(\sigma_{1},\sigma_{2})  \}.
\end{equation*}

\subsubsection{Equivalence relations}\label{subsub:equivalence_relations}
We define equivalence relations between problems based on their constraint systems and their cost function. Let $\sigma_{1}\sim_{\Sigma}\sigma_{2}$ (``$\Sigma$-equivalence'') denote an equivalence between the constraint systems of problems $\pi_{1}=(c^{1},\sigma_{1})=(c^{1},(a^{1},b^{1}))$ and $\pi_{2}=(c^{2},\sigma_{2})=(c^{2},(a^{2},b^{2}))$ when
\begin{equation*}
\{ (a^{1}_{t},b^{1}_{t}), t\in T \} = \{ (a^{2}_{t},b^{2}_{t}), t\in T \}.
\end{equation*}

Under this equivalence, problems $\pi_{1}$ and $\pi_{2}$ are equivalent up to an ordering of the indices and redundancy in the constraints, which is a stronger condition than the feasible solution sets being the same\footnote{Suppose that $\pi_{1}\sim_{\Sigma}\pi_{2}$ and that the constraint $\langle 0 , x\rangle \geq -1$ does not exist in either $\pi_{1}$ or $\pi_{2}$. If we add the constraint $\langle 0 , x\rangle \geq -1$ to $\pi_{1}$, then it is no longer the case that $\pi_{1}\sim_{\Sigma}\pi_{2}$. However, because $\langle 0 , x\rangle \geq -1$ is a trivial constraint, we still have $F(\sigma_{1})=F(\sigma_{2})$.}, $F(\sigma_{1})=F(\sigma_{2})$.  Let $\pi_{1} \sim_{\Pi} \pi_{2}$ (``$\Pi$-equivalence'') denote an equivalence between $\pi_{1}$ and $\pi_{2}$ when $c^{1}=c^{2}$ in addition to $\sigma_{1}\sim_{\Sigma}\sigma_{2}$. Note that $\Pi$-equivalence implies that $\pi_{1}$ and $\pi_{2}$ share the same optimal value, feasible solution set, optimal solution set, and $\epsilon$-optimal solution set. In this paper, we will distinguish between ``$\Pi$-equivalent'' defined above and ``equivalent'', which we use to refer to optimization problems that have the same feasible solution set, optimal value, optimal solution set, and $\epsilon$-optimal solution set.

\subsubsection{Why indexing matters in LSIO problems}

The following example illustrates an important subtlety when using the definitions in the previous two subsections. In particular, the choice of indexing in LSIO problems affects the $\delta^{\Sigma}$ distance between them.

\begin{example}[LSIO distances]
Consider the following $\Sigma$-equivalent constraint systems $\sigma_{1} = (a^{1},b^{1})$ and $\sigma_{2}=(a^{2},b^{2})$, with $T=\{ 1,2 \}$, and $a^{1}:=(a^{1}_{1},a^{1}_{2})$ and $a^{2}:=(a^{2}_{1},a^{2}_{2})$:
\begin{eqnarray*}
a^{1}_{1} = a^{2}_{1} &=& \left( \begin{array}{c} 1 \\ 0 \end{array} \right) \\
a^{1}_{2} = a^{2}_{2} &=& \left( \begin{array}{c} 0 \\ 1 \end{array} \right) \\
b^{1}_{t} = b^{2}_{t} &=& 0 \hspace{5mm}  \forall t \in T.
\end{eqnarray*}
Then $\delta^{\Sigma}(\sigma^{1},\sigma^{2})= 0$. On the other hand, if we simply swap the indexing of two of the $a$ vectors
\begin{eqnarray*}
a^{1}_{1} = a^{2}_{2} &=& \left( \begin{array}{c} 1 \\ 0 \end{array} \right) \\
a^{1}_{2} = a^{2}_{1} &=& \left( \begin{array}{c} 0 \\ 1 \end{array} \right) \\
b^{1}_{t} = b^{2}_{t} &=& 0 \hspace{5mm}  \forall t \in T,
\end{eqnarray*}
then $\delta^{\Sigma}(\sigma^{1},\sigma^{2})= \sqrt{2}$. Simply rearranging the indices generates a different $\delta^{\Sigma}$ distance, even though these constraint systems are $\Sigma$-equivalent. \hfill$\square$
\end{example}

\subsubsection{Classifying LSIO problems}

Let $\Sigma_{f}:= \{\sigma \in \Sigma \mbox{ : } F(\sigma) \neq \varnothing \}$ denote the set of feasible constraint systems and $\Sigma_{i}:= \Sigma \backslash \Sigma_{f}$ denote the set of infeasible constraint systems. Let $\Pi_{f}:=\{ \pi \in \Pi \mbox{ : } \sigma \in \Sigma_{f}\}$ denote the set of feasible problems and $\Pi_{i}:=\Pi\backslash\Pi_{f}$ denote the set of infeasible problems. Furthermore, let $\Pi_{s}:=\{ \pi \in \Pi \mbox{ : } F^{\opt}(\pi)\neq \varnothing \}$ denote the set of solvable problems.
Finally, we define $\Sigma_{\infty}:=\{\sigma\in \Sigma \mbox{ : }\delta^{\Sigma}(\sigma,\bd(\Sigma_{f}))=+\infty\}$ and $\Pi_{\infty}:=\{\pi\in \Pi \mbox{ : }\delta^{\Pi}(\pi,\bd(\Pi_{f}))=+\infty\}$. 
Useful facts about these sets of LSIO problems are stated and proved in Appendix~\ref{app:set_LSIO_problems_relationships}.


\subsection{Robust Optimization}

In this section, we present a formulation of the Robust Linear Optimization problem. We begin with a RO problem with a single constraint and then generalize to the case of multiple constraints. Although our results hold for the general case, we present results for the single-constraint case first in Sections~\ref{sec:RO_LSIO_Transform} and~\ref{sec:Lipschitz_continuity_optimal_value}, to help build intuition.

Suppose $c$ is the cost function, $a$ is the coefficient vector, and $b$ is the right-hand side value. For now, suppose $c$ and $b$ are fixed, but $a$ is uncertain and lies in an uncertainty set $U\subseteq\mathbb{R}^{n}$, which is a non-empty, compact, and convex subset of $\mathbb{R}^{n}$. Then the single-constraint RO problem is:
\begin{equation}\label{eq:singleRobust}
\begin{alignedat}{3}
\underset{x\in \mathbb{R}^{n}}{\inf} & \quad \langle c, x\rangle\\
\mathrm{subject \hspace{1mm} to}        & \quad \langle a, x\rangle \geq b, &&\quad  \forall a\in U.
\end{alignedat}
\end{equation}

We denote $\RO(U):=(c,U,b)$ as the tuple that uniquely determines the single-constraint RO problem \eqref{eq:singleRobust}.
To avoid defining new notation, we denote its feasible solution set, optimal value, optimal solution set, and $\epsilon$-optimal solution set of RO problems in the same way as in the LSIO case: $F(\RO(U))$, $\nu(\RO(U))$, $F^{\opt}(\RO(U))$, and $F^{\epsilon\mhyphen\opt}(\RO(U))$, respectively; the context will make it clear whether we are talking about LSIO or RO problems\footnote{Note that we write $F(\RO(U))$ as the feasible solution set of a robust problem, without making a distinction between a ``constraint system'' $(U,b)$ and a RO ``problem'' $(c,U,b)$.}.

We measure the distance between two uncertainty sets $U$ and $V$ using the Hausdorff distance:
\begin{equation*}
d_{H}(U,V):=\max\left\{ \adjustlimits\sup_{u\in U}\inf_{v\in V} \|u-v\|, \adjustlimits\sup_{v\in V}\inf_{u\in U} \|u-v\|  \right\}.
\end{equation*}

Now we generalize formulation \eqref{eq:singleRobust} to the multiple-constraint case, as well as the case of uncertainty in both the cost function and right-hand side values. 
Suppose $c$ is an uncertain cost function and lies in an uncertainty set $C\subseteq\mathbb{R}^{n}$. Suppose $I$ is an index set that is possibly uncountable. For each $\alpha\in I$, suppose $(a^{\alpha},b^{\alpha})$ is an uncertain constraint that lies in an uncertainty set $U_{\alpha}\subseteq\mathbb{R}^{n+1}$. Each constraint, then, has an uncertainty set that is independent of the uncertainty sets in the other constraints. For convenience, write the \emph{constraint-wise uncertainty set} $\natmbU:=\bigotimes_{\alpha\in I}U_{\alpha}$. Then, the multiple-constraint RO problem is:
\begin{equation}\label{eq:multiple_FIRST_ROBUST}
\begin{alignedat}{3}
\adjustlimits \inf_{x\in\mathbb{R}^{n}}\sup_{c\in C}  & \quad \langle c,x\rangle,\\
\mathrm{subject \hspace{1mm} to}            & \quad \langle a^{\alpha},x \rangle \geq b^{\alpha}, &&\quad  (a^{\alpha},b^{\alpha})\in U_{\alpha}\mbox{, } \forall \alpha \in I.
\end{alignedat}
\end{equation}

Without loss of generality, we prove our stability results for constraint-wise uncertainty sets of the form $\natmbU$. This is because, for any RO problem with an arbitrary uncertainty set $\mbU$ on the constraints, it is possible to formulate an RO problem of the form \eqref{eq:multiple_FIRST_ROBUST} with an uncertainty set of the form $\natmbU:=\bigotimes_{\alpha\in I}U_{\alpha}$ generated from $\mbU$ (provided $\mbU$ generates $U_{\alpha}$ that are compact and convex). See Remark \ref{rem:Remark_Transformation} for additional details.

As in the LSIO case, a RO problem satisfies the strong Slater condition with strong Slater constant $\rho$, if there exists $x_{0}\in\mathbb{R}^{n}$ and $\rho>0$ such that $\langle a^{\alpha} , x_{0} \rangle \geq b^{\alpha} + \rho$  for all $(a^{\alpha},b^{\alpha}) \in U_{\alpha}$, $\alpha \in I$. We can assume the uncertainty is only in the constraints, characterized completely by $\natmbU$, with fixed cost function $c$:
\begin{equation}\label{eq:multipleRobustFormINITIAL}
\begin{alignedat}{3}
\underset{x\in\mathbb{R}^{n}}{\inf} \mbox{ } & \quad \langle c,x\rangle,\\
\mathrm{subject \hspace{1mm} to}            & \quad \langle a^{\alpha},x \rangle \geq b^{\alpha}, &&\quad  (a^{\alpha},b^{\alpha})\in U_{\alpha}\mbox{, } \forall \alpha \in I,
\end{alignedat}
\end{equation}
which is transformed from the formulation~\eqref{eq:multiple_FIRST_ROBUST} through an epigraph reformulation. Note that if \eqref{eq:multiple_FIRST_ROBUST} satisfies the strong Slater condition, \eqref{eq:multipleRobustFormINITIAL} does as well, with the same Slater constant.

We denote $\RO(\natmbU):=(\natmbU)$ as the (singleton) tuple that uniquely determines the multiple-constraint RO problem. When $I$ is uncountably infinite, formulation~\eqref{eq:multipleRobustFormINITIAL} is actually a robust LSIO problem. Thus, beyond simply generalizing the single-constraint problem, the results we derive for robust linear optimization using formulation~\eqref{eq:multipleRobustFormINITIAL} extend to robust LSIO problems by definition (\citet{goberna2014post}), and give insight into the stability of robust convex optimization.

The quantities $F(\RO(\natmbU))$, $\nu(\RO(\natmbU))$, $F^{\opt}(\RO(\natmbU))$, and $F^{\epsilon\mhyphen\opt}(\RO(\natmbU))$ are defined analogously to the single-constraint case. Given two constraint-wise uncertainty sets $\natmbU$ and $\natmbV$, we define a distance metric $\mb{d}_{\natural}(\natmbU,\natmbV)$ as:
\begin{equation*}
\mb{d}_{\natural}(\natmbU,\natmbV):= \sup_{\alpha\in I}d_{H}(U_{\alpha},V_{\alpha}),
\end{equation*}
defined on the metric space of constraint-wise uncertainty sets where the uncertainty set on each constraint is compact and convex.



Table \ref{tab:notationConstants} in Appendix \ref{app:Summary_Notation} summarizes the notation used in this paper.

\subsection{Important Theorems}
The main results we present in this paper parallel and leverage the following established results on stability of LSIO and general convex optimization problems.

\subsubsection{Lipschitz continuity of the optimal value}
\begin{theorem}[Theorem 4.3 from \citet{canovas2006lipschitz}]\label{thm:LipschitzContinuityLSIO}
Let $\pi_{0}=(c^{0},\sigma_{0})\in \interior(\Pi_{s})$, and let $0\leq \epsilon <\delta^{\Pi}(\pi_{0},\bd(\Pi_{s}))$. If $\pi_{1}=(c^{1},\sigma_{1})$ and $\pi_{2}=(c^{2},\sigma_{2})$ are problems in $\Pi$ satisfying $\delta^{\Pi}(\pi_{j},\pi_{0}) \leq \epsilon$ for $j=1,2$, then
\begin{equation}
| \nu(\pi_{1}) - \nu(\pi_{2}) | \leq L(\pi_{0},\epsilon) \delta^{\Pi}(\pi_{1},\pi_{2}),
\end{equation}
where
\begin{equation}
L(\pi_{0},\epsilon) := \varphi_{*}(0) \left( (\epsilon + \|c^{0}\|)\frac{\psi(\mu(\pi_{0},\epsilon))}{\delta^{\Sigma}(\sigma_{0},\Sigma_{\mathrm{i}})-\epsilon} + \mu(\pi_{0},\epsilon)\right),
\end{equation}
with functions $\varphi_{*}(\cdot)$ and $\psi(\cdot)$, and constant $\mu(\cdot,\epsilon)$ as defined in Definition \ref{def:constantDefinition}.
\end{theorem}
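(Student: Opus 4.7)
The plan is to prove a one-sided bound $\nu(\pi_2) - \nu(\pi_1) \leq L(\pi_0, \epsilon) \delta^\Pi(\pi_1, \pi_2)$ and then swap $\pi_1, \pi_2$ by symmetry. The crucial input is that $\pi_0 \in \interior(\Pi_s)$ with $\epsilon < \delta^\Pi(\pi_0, \bd(\Pi_s))$: this guarantees every problem in the $\epsilon$-ball around $\pi_0$ is solvable, and furnishes two uniform estimates valid on this ball---a distance-to-infeasibility bounded below by $\delta^\Sigma(\sigma_0, \Sigma_i) - \epsilon$, and a norm bound $\mu(\pi_0, \epsilon)$ on optimal solutions (the constant from Definition~\ref{def:constantDefinition}).

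First, I would fix optimal solutions $x^*_1 \in F^{\opt}(\pi_1)$, $x^*_2 \in F^{\opt}(\pi_2)$ with $\|x^*_j\| \leq \mu(\pi_0, \epsilon)$, and a strong Slater point $\bar{x}$ for $\pi_0$ which by the perturbation bound retains a Slater margin of at least $\delta^\Sigma(\sigma_0, \Sigma_i) - \epsilon$ for both $\pi_1$ and $\pi_2$. The main construction is the candidate point $\hat{x}_2 := (1 - \lambda) x^*_1 + \lambda \bar{x}$ for $\pi_2$, with $\lambda$ chosen proportional to $\delta^\Sigma(\sigma_1, \sigma_2) / [\delta^\Sigma(\sigma_0, \Sigma_i) - \epsilon]$---just large enough that the fraction $\lambda$ of $\bar{x}$'s Slater slack offsets the constraint residuals of $x^*_1$ against $\sigma_2$, which are controlled by $\delta^\Sigma(\sigma_1, \sigma_2)(1 + \|x^*_1\|)$.

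Then $\nu(\pi_2) - \nu(\pi_1) \leq \langle c^2, \hat{x}_2 \rangle - \langle c^1, x^*_1 \rangle$, which splits as
\begin{equation*}
\langle c^2 - c^1, x^*_1 \rangle \;+\; \lambda \langle c^2, \bar{x} - x^*_1 \rangle.
\end{equation*}
The first term is bounded by $\mu(\pi_0, \epsilon) \|c^1 - c^2\|$, producing the additive $\mu(\pi_0, \epsilon)$ summand inside $L(\pi_0, \epsilon)$. The second term, after substituting $\|c^2\| \leq \|c^0\| + \epsilon$, bounding $\|\bar{x} - x^*_1\|$ by $\psi(\mu(\pi_0, \epsilon))$, and plugging in the formula for $\lambda$, produces the fractional summand in $L$. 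The leading $\varphi_*(0)$ should appear through a dual-norm conversion linking the primal estimates to the metric $\delta^\Pi$.

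The main obstacle I anticipate is the rigorous derivation of the two uniform bounds: (i) proving $\|x^*\| \leq \mu(\pi_0, \epsilon)$ for \emph{all} perturbed problems requires a uniform coercivity argument for the LSIO family around $\pi_0$, not just pointwise boundedness of each optimal set; and (ii) the choice of $\lambda$ must be calibrated carefully because the Slater slack of $\bar{x}$ with respect to $\sigma_2$ has already been diminished by the perturbation, so the coefficient in front of $\delta^\Sigma(\sigma_1,\sigma_2)$ has to be tight enough to yield exactly the denominator $\delta^\Sigma(\sigma_0, \Sigma_i) - \epsilon$ in $L$. Once these uniform bounds are established, matching the explicit formula for $L(\pi_0, \epsilon)$ is primarily a matter of bookkeeping through the auxiliary functions $\varphi_*$, $\psi$, and constant $\mu$.
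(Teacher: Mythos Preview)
The paper does not prove this statement. Theorem~\ref{thm:LipschitzContinuityLSIO} is quoted verbatim as Theorem~4.3 of \citet{canovas2006lipschitz} and is used as a black-box input to the paper's own results (Theorems~\ref{thm:LipschitzSingleRO} and~\ref{thm:Lipschitz_multiple_coeff_cost_RHS_unc}); no proof or proof sketch of it appears anywhere in the manuscript. Consequently there is nothing in the paper to compare your proposal against.

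For what it is worth, your outline---pick uniformly norm-bounded optimizers, push an optimal point of $\pi_1$ toward a strong Slater point to manufacture a feasible point for $\pi_2$, then split the cost difference into a $\langle c^2-c^1, x^*_1\rangle$ term and a $\lambda\langle c^2,\bar{x}-x^*_1\rangle$ term---is in the spirit of the argument in the original \citet{canovas2006lipschitz} reference, where the uniform bound $\mu(\pi_0,\epsilon)$ on optimal solutions and the feasible-set Lipschitz estimate (their Corollary~4.1, invoked in the present paper in Step~4 of the proof of Theorem~\ref{thm:closedness_USC_optimal_set}) play exactly the roles you assign them. But confirming that your bookkeeping reproduces the precise constant $L(\pi_0,\epsilon)$ requires going to that source, not to this paper.
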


The above theorem establishes Lipschitz continuity of the optimal value for LSIO problems: the difference in the optimal value between two sufficiently close LSIO problems is bounded linearly by their distance in the $\delta^{\Pi}$-metric. Note that this is a \emph{local} Lipschitz continuity result because the problems under consideration, $\pi_1$ and $\pi_2$, must reside in a neighborhood of a third problem, $\pi_0$.


\subsubsection{Closedness and Upper Semi-Continuity of optimal solution set}

Note that $F(\cdot)$ can be defined as a set mapping from the set of constraint systems $\Sigma$ to the set of subsets of $\mathbb{R}^{n}$. Similarly, $\nu(\cdot)$, $F^{\opt}(\cdot)$, and $F^{\epsilon\mhyphen\opt}(\cdot)$ can be viewed as set mappings from the set of problems $\Pi$ to the set of (extended) real numbers, in the case of the optimal value, and the set of subsets of $\mathbb{R}^{n}$, in the case of the optimal and $\epsilon$-optimal solution set. 

\begin{theorem}[Theorem 3.1 (i),(ii), and (vi) from \citet{goberna1996stability}]\label{thm:lsc_SS_interior} If $\pi:=(c,\sigma)\in \Pi_{f}$, then the following are equivalent:
\begin{enumerate}
\item $F(\cdot)$ is lower semi-continuous at $\sigma$.
\item $\pi\in \interior(\Pi_{f})$.
\item $\pi$ satisfies the strong Slater condition.
\end{enumerate}
\end{theorem}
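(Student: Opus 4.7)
The plan is to prove the three-way equivalence via a cyclic chain of short perturbation arguments built around a strong Slater point. Specifically, I would establish (3) $\Rightarrow$ (2) $\Rightarrow$ (3) and then (2) $\Leftrightarrow$ (1), so that the Slater point becomes available whenever I need it.

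For (3) $\Rightarrow$ (2), fix a strong Slater pair $(x_0, \rho)$. If $\pi' = (c', \sigma')$ satisfies $\delta^\Pi(\pi', \pi) \leq \epsilon$, then both $\|a'_t - a_t\| \leq \epsilon$ and $|b'_t - b_t| \leq \epsilon$ for every $t \in T$, so by Cauchy--Schwarz
\[
\langle a'_t, x_0 \rangle \geq \langle a_t, x_0 \rangle - \epsilon \|x_0\| \geq b_t + \rho - \epsilon \|x_0\| \geq b'_t + \rho - \epsilon(\|x_0\| + 1).
\]
Choosing $\epsilon < \rho / (\|x_0\| + 1)$ keeps $x_0$ feasible for every such $\sigma'$, so the entire $\epsilon$-ball around $\pi$ lies in $\Pi_f$, giving $\pi \in \interior(\Pi_f)$. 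The contrapositive (2) $\Rightarrow$ (3) proceeds by considering the uniform upward shift $\sigma_\epsilon := (a, b + \epsilon \mathbf{1})$, which lies within $\delta^\Sigma$-distance $\epsilon$ of $\sigma$. Any point feasible for $\sigma_\epsilon$ would certify strong Slater for $\sigma$ with margin $\epsilon$; hence if strong Slater fails, $\sigma_\epsilon \in \Sigma_i$ for every $\epsilon > 0$, and infeasible problems approach $\pi$ arbitrarily closely in $\delta^\Pi$.

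For (2) $\Rightarrow$ (1), suppose $V \subseteq \mathbb{R}^n$ is open with $V \cap F(\sigma) \ni x$. Using the Slater point $x_0$ (available via the already-established (2) $\Leftrightarrow$ (3)), the convex combination $x_\lambda := (1-\lambda) x + \lambda x_0$ lies in $V$ for $\lambda$ sufficiently small, and enjoys a uniform slack $\lambda \rho$ against every constraint of $\sigma$. Re-running the calculation from (3) $\Rightarrow$ (2) with slack $\lambda \rho$, any $\sigma'$ within $\delta^\Sigma$-distance $\lambda \rho / (\|x_\lambda\| + 1)$ of $\sigma$ admits $x_\lambda$ as a point of $V \cap F(\sigma')$, which is exactly the definition of lower semi-continuity at $\sigma$. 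Conversely, (1) $\Rightarrow$ (2) is immediate: take $V$ to be any open neighborhood of a point of $F(\sigma)$, and lower semi-continuity forces $F(\sigma') \neq \varnothing$ for all $\sigma'$ in a $\delta^\Sigma$-neighborhood of $\sigma$, so $\pi \in \interior(\Pi_f)$.

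The main technical care, and what I expect to be the subtle point, is exploiting the \emph{uniformity} of the Slater margin $\rho$ across all $t \in T$: a single scalar simultaneously controls every constraint after perturbation, which is precisely what separates the strong Slater condition from a pointwise one and, crucially, what allows the argument to go through without any topological or cardinality assumption on the index set $T$.
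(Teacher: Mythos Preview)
Your proof is correct and self-contained. Note, however, that the paper does not actually prove this theorem: it is stated in Section~2.4 as a cited result (Theorem~3.1 (i), (ii), and (vi) from Goberna--L\'opez 1996) and invoked as a black box throughout the later arguments. So there is no ``paper's own proof'' to compare against.

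That said, your approach is essentially the standard one and would match what appears in the original source. A couple of minor points worth tightening if you write it out fully: in the (1) $\Rightarrow$ (2) direction, you should make explicit the identification $\Pi_f = \mathbb{R}^n \times \Sigma_f$, so that $\sigma \in \interior(\Sigma_f)$ immediately yields $\pi \in \interior(\Pi_f)$; and in (2) $\Rightarrow$ (3), it is worth noting that $\delta^\Sigma(\sigma_\epsilon, \sigma) = \epsilon$ exactly (not just $\leq \epsilon$), since the perturbation is purely in the $b$-coordinate. Your closing remark about the role of the \emph{uniform} Slater margin across $T$ is exactly the right emphasis: this is indeed the mechanism that allows the equivalences to hold without any topological structure on $T$, which is crucial for the paper's RO-LSIO transformation where $T$ is taken to be $\mathbb{R}^n$ or $I \times \mathbb{R}^{n+1}$ with no assumed topology.
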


\begin{theorem}[Theorem 5.1 (i) and (ii) from \citet{canovas1999stability}]\label{thm:optSetConv} If $\pi:=(c,\sigma)\in \Pi_{s}$, then:
\begin{enumerate}
\item $F^{\opt}(\cdot)$ is closed at $\pi$ if and only if either $F(\cdot)$ is lower semi-continuous at $\pi$ or $F(\sigma)=F^{\opt}(\pi)$.
\item If $F^{\opt}(\cdot)$ is upper semi-continuous at $\pi$, then $F^{\opt}(\cdot)$ is closed at $\pi$. The converse is true if $F^{\opt}(\pi)$ is bounded.
\end{enumerate}
\end{theorem}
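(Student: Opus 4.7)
I would treat the two implications separately. For the \emph{if} direction, take an arbitrary sequence $\pi_k = (c^k,\sigma_k) \to \pi$ with $x_k \in F^{\opt}(\pi_k)$ and $x_k \to x^\ast$; I want to show $x^\ast \in F^{\opt}(\pi)$. First, feasibility is preserved under $\delta^{\Pi}$-convergence: for each $t \in T$, $\langle a^k_t,x_k\rangle \geq b^k_t$ passes to the limit because $\delta^{\Sigma}(\sigma_k,\sigma)\to 0$ gives uniform convergence of $(a^k_t,b^k_t)$ to $(a_t,b_t)$. Hence $x^\ast \in F(\sigma)$. If $F(\sigma) = F^{\opt}(\pi)$ we are immediately done. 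Otherwise, under the lsc hypothesis, pick any $y \in F^{\opt}(\pi)$ and use lower semi-continuity of $F$ at $\sigma$ to produce a sequence $y_k \in F(\sigma_k)$ with $y_k \to y$; then $\nu(\pi_k) \leq \langle c^k, y_k\rangle \to \langle c,y\rangle = \nu(\pi)$, so $\limsup_k \nu(\pi_k) \leq \nu(\pi)$. Combining with $\nu(\pi_k) = \langle c^k,x_k\rangle \to \langle c,x^\ast\rangle$ yields $\langle c, x^\ast \rangle \leq \nu(\pi)$, and feasibility of $x^\ast$ forces equality, so $x^\ast \in F^{\opt}(\pi)$.

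\paragraph{The hard direction of (i).} For the \emph{only if} direction I would argue by contrapositive: suppose $F$ is not lsc at $\sigma$ and $F(\sigma) \supsetneq F^{\opt}(\pi)$, and construct a sequence $\pi_k \to \pi$ with optimal solutions $x_k$ converging to some $x^\ast \notin F^{\opt}(\pi)$. The failure of lsc gives a point $\bar{x} \in F(\sigma)$ and a sequence $\sigma_k \to \sigma$ with $d(\bar{x},F(\sigma_k)) \not\to 0$; meanwhile, there is some $y \in F(\sigma)\setminus F^{\opt}(\pi)$ with $\langle c,y\rangle > \nu(\pi)$. The construction I would attempt is a cost perturbation $c^k \to c$ chosen so that $\pi_k = (c^k,\sigma_k)$ has an optimal solution drifting toward a non-optimal point of $\pi$ (for example, along a direction pointing from $F^{\opt}(\pi)$ toward $y$, exploiting the lsc-defect of $F$ at $\sigma$ to ensure the drift does not collapse back to $F^{\opt}(\pi)$). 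This is the main obstacle; the construction in \citet{canovas1999stability} should be the model to follow.

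\paragraph{Plan for part (ii).} The implication usc $\Rightarrow$ closed is a standard fact in set-valued analysis: if $x_k \in F^{\opt}(\pi_k)$, $\pi_k \to \pi$, and $x_k \to x^\ast$, then for any open $U\supseteq F^{\opt}(\pi)$ containing $x^\ast$'s neighborhood (take $U$ suitably), usc forces $x_k \in U$ eventually, and passing to intersections of nested neighborhoods yields $x^\ast \in F^{\opt}(\pi)$. For the converse under boundedness of $F^{\opt}(\pi)$, I would argue by contradiction. If $F^{\opt}$ were not usc at $\pi$, there would exist an open $U \supseteq F^{\opt}(\pi)$ and a sequence $\pi_k \to \pi$ with $x_k \in F^{\opt}(\pi_k)\setminus U$. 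Since $F^{\opt}(\pi)$ is bounded, a careful neighborhood argument (using that $F^{\opt}(\pi_k)$ stays near the bounded set $F^{\opt}(\pi)$ by a Lipschitz-type bound as in Theorem~\ref{thm:LipschitzContinuityLSIO}, or directly via closedness of feasibility plus boundedness of optimal values) yields a bounded $\{x_k\}$, hence a convergent subsequence $x_k \to x^\ast \notin U \supseteq F^{\opt}(\pi)$; this contradicts closedness of $F^{\opt}$ at $\pi$.
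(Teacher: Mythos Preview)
This theorem is not proved in the paper at all: it is quoted verbatim as Theorem~5.1(i)--(ii) of \citet{canovas1999stability} in the ``Important Theorems'' subsection of the preliminaries, and is only \emph{used} later (in the proof of Theorem~\ref{thm:closedness_USC_optimal_set}). So there is no ``paper's own proof'' to compare against; any comparison would have to be with the original proof in \citet{canovas1999stability}.

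On the merits of your sketch itself: the \emph{if} direction of (i) is essentially correct and standard. The \emph{only if} direction you openly leave as a vague plan (``the construction I would attempt\ldots''); this is exactly the delicate part in \citet{canovas1999stability}, and your outline does not yet contain the key idea of how the failure of lower semi-continuity of $F$ at $\sigma$, together with $F(\sigma)\neq F^{\opt}(\pi)$, is converted into a specific perturbed sequence whose optimal solutions converge outside $F^{\opt}(\pi)$.

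For part~(ii), the implication usc $\Rightarrow$ closed is fine once you use that $F^{\opt}(\pi)$ is a closed set (so its complement can be separated by an open neighborhood). The converse, however, has a real gap: you need the sequence $\{x_k\}$ with $x_k\in F^{\opt}(\pi_k)\setminus U$ to be \emph{bounded} in order to extract a convergent subsequence, and your justification (``a Lipschitz-type bound as in Theorem~\ref{thm:LipschitzContinuityLSIO}, or directly via closedness of feasibility plus boundedness of optimal values'') does not work as stated. Theorem~\ref{thm:LipschitzContinuityLSIO} requires $\pi\in\interior(\Pi_s)$, which is strictly stronger than the hypothesis $\pi\in\Pi_s$ here; and boundedness of optimal \emph{values} does not by itself bound the optimal \emph{solutions} of the $\pi_k$. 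The argument in \citet{canovas1999stability} handles this boundedness issue separately, and that is precisely the step your sketch is missing.
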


These two theorems are largely technical theorems that relate the different notions of qualitative stability in LSIO problems, including closedness, lower semi-continuity, and upper semi-continuity. As stated in \citet{lopez2012stability}, closedness means that every convergent sequence composed of points from the sequence of solution sets always converges to a point in the limit set. Lower (upper) semi-continuity guarantees that the solution sets do not shrink (explode) in size in the limit.

\subsubsection{Lipschitz continuity of the $\epsilon$-optimal solution set}

\begin{theorem}[Theorem 7.69 from \citet{rockafellar2009variational}]\label{thm:estimates_for_approximately_optimal_solutions} Let $f$ and $g$ be proper, lower semi-continuous, convex functions on $\mathbb{R}^{n}$ with $\argmin f$ and $\argmin g$ non-empty. Let $r_{0}$ be large enough that $\argmin f\cap r_{0} B\neq\varnothing$ and  $\argmin g\cap r_{0} B\neq\varnothing$, and also $\min f \geq -r_{0}$ and $\min g \geq -r_{0}$. Then, with $r > r_{0}$, $\epsilon>0$ and $\ov{\eta} = \wh{\mb{d}}_{r}(f,g)$,
\begin{equation}
\wh{\mb{d}}_{r}(\epsilon \mathrm{\mhyphen argmin} f, \epsilon \mathrm{\mhyphen argmin} g) \leq \ov{\eta}\left( 1 + \frac{2r}{\ov{\eta}+\epsilon/2} \right) \leq (1+4r\epsilon^{-1})\wh{\delta}^{+}_{r}(f,g),
\end{equation}
where $\wh{\mb{d}}_{r}$ is defined in Def. \ref{def:set_distance_definitions} and $\wh{\delta}^{+}_{r}$ is defined in Def. \ref{def:function_distance_definitions}.
\end{theorem}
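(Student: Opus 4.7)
The plan is to work directly from the definitions: $\wh{\mb{d}}_r(f,g)$ is a Hausdorff-type distance between the epigraphs of $f$ and $g$ truncated to a ball of radius $r$, while the $\epsilon\mhyphen\argmin$ sets are the sublevel sets $\{x : f(x) \leq \min f + \epsilon\}$ and $\{x : g(x) \leq \min g + \epsilon\}$. The key idea is three-fold: (a) transfer epigraph closeness to closeness of the minima, yielding $|\min f - \min g| \leq \ov{\eta}$; (b) for any $x \in (\epsilon\mhyphen\argmin f) \cap rB$, construct a companion point $y$ near $x$ with $g(y) \leq f(x) + \ov{\eta}$; and (c) pull $y$ into $\epsilon\mhyphen\argmin g$ by a convex combination with an actual $g$-minimiser.

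Concretely, I would first appeal to the definition of $\wh{\mb{d}}_r$ to produce, for each such $x$, a point $y \in rB$ with $\|y - x\| \leq \ov{\eta}$ and
\begin{equation*}
g(y) \leq f(x) + \ov{\eta} \leq \min f + \epsilon + \ov{\eta} \leq \min g + \epsilon + 2\ov{\eta},
\end{equation*}
where the last step uses (a). Since $y$ need not be $\epsilon$-optimal for $g$, I would then fix $y^{*} \in \argmin g \cap r_{0}B$ and form $z_{\lambda} = (1-\lambda) y + \lambda y^{*}$. Convexity of $g$ yields
\begin{equation*}
g(z_{\lambda}) - \min g \leq (1-\lambda)\bigl(g(y) - \min g\bigr) \leq (1-\lambda)(\epsilon + 2\ov{\eta}),
\end{equation*}
so the sharp choice $\lambda = 2\ov{\eta}/(2\ov{\eta} + \epsilon)$ gives $g(z_{\lambda}) - \min g \leq \epsilon$, placing $z_{\lambda} \in \epsilon\mhyphen\argmin g$.

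The triangle inequality, together with $\|y - y^{*}\| \leq 2r$ (both points lie in $rB$), then yields
\begin{equation*}
\|z_{\lambda} - x\| \leq \|y - x\| + \lambda\,\|y - y^{*}\| \leq \ov{\eta} + 2r\lambda = \ov{\eta}\left(1 + \frac{2r}{\ov{\eta} + \epsilon/2}\right),
\end{equation*}
which is the first claimed inequality after symmetrising in $f \leftrightarrow g$. For the second inequality, I would combine the trivial arithmetic bound $1 + 2r/(\ov{\eta} + \epsilon/2) \leq 1 + 4r/\epsilon$ with the standard comparison $\wh{\mb{d}}_r(f,g) \leq \wh{\delta}^{+}_r(f,g)$ between the symmetric truncated epi-distance and its one-sided excess.

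The main obstacle I anticipate is step (b): carefully unpacking the definition of $\wh{\mb{d}}_r$ to secure simultaneously a small displacement $\|y - x\|$ and a controlled increase $g(y) - f(x)$, while ensuring $y$ remains inside $rB$ so the truncation actually applies. The hypothesis on $r_{0}$ is precisely what guarantees $\argmin g \cap r_{0}B \subseteq rB$, so that a genuine minimiser $y^{*}$ is available for the convex combination; and the precise choice of $\lambda$ is what lets the estimate land on the sharp form stated rather than a looser $O(r\ov{\eta}/\epsilon)$ bound.
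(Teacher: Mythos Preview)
The paper does not prove this statement: Theorem~\ref{thm:estimates_for_approximately_optimal_solutions} is quoted verbatim from \citet{rockafellar2009variational} (their Theorem~7.69) and is used as a black box in Lemma~\ref{lem:distApproxOpt}. There is therefore no in-paper proof to compare against.

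That said, your outline is the standard Rockafellar--Wets argument: bound $|\min f-\min g|$ via epigraph closeness, then for each $x$ in the truncated $\epsilon$-argmin of $f$ pull a nearby epigraph point of $g$ back onto $\epsilon\mhyphen\argmin g$ by a convex combination with a true minimiser, and compute the optimal mixing weight $\lambda$. Two technical points deserve care. First, step~(b) requires $(x,f(x))\in rB\subset\mathbb{R}^{n+1}$, not merely $x\in rB\subset\mathbb{R}^{n}$; the hypotheses $\min f\geq -r_{0}$ and $\argmin f\cap r_{0}B\neq\varnothing$ are what allow you to place a suitable epigraph point inside the $r$-ball, but the passage from ``$x\in rB$'' to ``$(x,f(x))\in rB$'' is exactly the delicate step you flag and is not automatic. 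Second, your final sentence invokes ``the standard comparison $\wh{\mb{d}}_{r}(f,g)\leq\wh{\delta}^{+}_{r}(f,g)$''; this holds, but in Rockafellar--Wets it is the content of their Proposition~7.61 rather than a triviality, so it should be cited rather than asserted.
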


This theorem establishes Lipschitz continuity of the $\epsilon$-optimal set of minimizers for proper, lower semi-continuous, convex functions. In turn, this result applies to LSIO problems because, as we shall see in Section \ref{sec:approx_optimal_set}, a LSIO problem can be equivalently represented as the minimization of a proper, lower semi-continuous, convex function.

\section{RO-LSIO Transformation}\label{sec:RO_LSIO_Transform}

It is well-known that RO problems can be considered as LSIO problems. However, LSIO problems are defined with respect to some index set $T$, while RO problems are defined without reference to any index set. In addition, for a given RO problem and for \emph{any} choice of $T$ of appropriate cardinality, there are an infinite number of $\Pi$-equivalent LSIO problems defined with respect to $T$ that are equivalent to the RO problem. Thus, the challenge when comparing different RO problems (for the purpose of evaluating stability or continuity with respect to perturbations in the uncertainty set) is twofold. First, we must define a common index set $T$ so that for \emph{any} two given RO problems, the LSIO problems generated with respect to $T$ that are respectively equivalent to the RO problems have a well-defined $\delta^{\Pi}$-distance.
Second, using this $T$, we must define a RO-LSIO transformation that is a local isometry between the space of RO problems and the space of LSIO problems. That is, RO problems whose uncertainty sets are close in Hausdorff distance are mapped into respectively equivalent LSIO problems that are close in the $\delta^{\Pi}$-metric.
%
%
%
%
%
%
%
The following conceptual example illustrates these challenges.
%



\begin{example}[Indexing issues in the RO-LSIO transformation]
Consider Figures~\ref{fig:robustGoodIndexing} and~\ref{fig:robustBadIndexing}, depicting two polyhedral uncertainty sets, $U$ and $V$, from two RO problems. Since the uncertainty sets are the same in the two figures, $d_H(U,V)$ is the same in both.

Now consider the LSIO problems induced by the indexing of the vertices of $U$ and $V$ as shown, which amounts to listing out the constraints in the form of a LSIO problem \eqref{eq:LSIO}. For polyhedral uncertainty sets, it suffices to index the vertices to transform an RO problem into a LSIO problem. First, note that defining $T = \{1, \ldots, 5\}$ would be sufficient to label all the vertices of $V$, but insufficient to label all the vertices of $U$, so a more appropriate index set is $T=\{1,\ldots,6\}$.

Second, note the different LSIO problems generated by different ways of indexing the vertices in Figures~\ref{fig:robustGoodIndexing} and~\ref{fig:robustBadIndexing}. In Figure~\ref{fig:robustGoodIndexing}, the distance $\delta^{\Pi}(\pi_{U},\pi_{V})$ between the two generated LSIO problems $\pi_{U}$ and $\pi_{V}$ is characterized by the distance between vertex 5 in $U$ and vertex 5 in $V$. However, if we change the indexing of the vertices of $V$, as shown in Figure~\ref{fig:robustBadIndexing}, the distance $\delta^{\Pi}(\pi_{U},\pi'_{V})$ between the two generated LSIO problems $\pi_{U}$ and $\pi'_{V}$ is characterized by the distance between vertex 3 in $U$ and vertex 3 in $V$. Clearly $\delta^{\Pi}(\pi_{U},\pi'_{V}) > \delta^{\Pi}(\pi_{U},\pi_{V})$, even though $\pi_{V} \sim_{\Pi} \pi'_{V}$. So even in the case where $|T|$ is finite, the choice of $T$ and the specific RO-LSIO indexing affects the measurement of distance using the $\delta^{\Pi}$ metric.

\begin{figure}
\centering
\subfloat[Indexing yielding LSIO problems with small $\delta^{\Pi}$-distance.]{\label{fig:robustGoodIndexing}
   \begin{tikzpicture}[scale=0.3]

\draw[draw, draw opacity=0.5] (0,0) rectangle (17,17);

\foreach \i in {1,...,6}
  \pgfmathparse{5*cos(\i*60)}
  \let\cosAngle\pgfmathresult
  \pgfmathparse{5*sin(\i*60)}
  \let\sinAngle\pgfmathresult
    \fill [opacity=0.5] (7+\cosAngle,7+\sinAngle) circle [radius=.1] coordinate (U-\i);

\filldraw [fill opacity=0.1,varRed1, thin, draw=baseRed]
  (U-1) \foreach \i in {2,...,6}{ -- (U-\i) } -- cycle;

\node [below left] at (U-1) {\tiny{1}};
\node [below left] at (U-2) {\tiny{2}};
\node [below left] at (U-3) {\tiny{3}};
\node [below left] at (U-4) {\tiny{4}};
\node [below left] at (U-5) {\tiny{5}};
\node [below left] at (U-6) {\tiny{6}};

\node [below left = 0.5 cm] at (U-4) {\tiny{$U$}};

\pgfmathparse{25}
\let\angleShift\pgfmathresult

\pgfmathparse{10*sin(\angleShift/2)*sin((180-\angleShift)/2-30)}
\let\xshift\pgfmathresult

\pgfmathparse{10*sin(\angleShift/2)*cos((180-\angleShift)/2-30)}
\let\yshift\pgfmathresult

\foreach \i in {1,...,5}
  \pgfmathparse{5*cos(\i*72+\angleShift)}
  \let\cosAngle\pgfmathresult
  \pgfmathparse{5*sin(\i*72+\angleShift)}
  \let\sinAngle\pgfmathresult
  \fill [presentGray,opacity=0.5] (7+\xshift+\cosAngle,7+\yshift+\sinAngle) circle [radius=.1] coordinate (V-\i);

\filldraw [fill opacity=0.1,varBlue1, thin, draw=baseBlue]
  (V-1) \foreach \i in {2,...,5}{ -- (V-\i) } -- cycle;

\node [above right] at (V-1) {\tiny{1}};
\node [above right] at (V-2) {\tiny{2}};
\node [below right] at (V-2) {\tiny{3}};
\node [above right] at (V-3) {\tiny{4}};
\node [above right] at (V-4) {\tiny{5}};
\node [above right] at (V-5) {\tiny{6}};

\node [above right = 0.5 cm] at (V-1) {\tiny{$V$}};

\draw[thin,presentGray,opacity=0.5] (U-1) -- (V-1);
\draw[thin,presentGray,opacity=0.5] (U-2) -- (V-2);
\draw[thin,presentGray,opacity=0.5] (U-3) -- (V-2);
\draw[thin,presentGray,opacity=0.5] (U-4) -- (V-3);
\draw[thin,presentGray,opacity=0.5] (U-5) -- (V-4);
\draw[thin,presentGray,opacity=0.5] (U-6) -- (V-5);

\end{tikzpicture}
}
\quad
\subfloat[Indexing yielding LSIO problems with large $\delta^{\Pi}$-distance.]{\label{fig:robustBadIndexing}

\begin{tikzpicture}[scale=0.3]

\draw[draw, draw opacity=0.5] (0,0) rectangle (17,17);

\foreach \i in {1,...,6}
  \pgfmathparse{5*cos(\i*60)}
  \let\cosAngle\pgfmathresult
  \pgfmathparse{5*sin(\i*60)}
  \let\sinAngle\pgfmathresult
    \fill [opacity=0.5] (7+\cosAngle,7+\sinAngle) circle [radius=.1] coordinate (U-\i);

\filldraw [fill opacity=0.1,varRed1, thin, draw=baseRed]
  (U-1) \foreach \i in {2,...,6}{ -- (U-\i) } -- cycle;

\node [below left] at (U-1) {\tiny{1}};
\node [below left] at (U-2) {\tiny{2}};
\node [below left] at (U-3) {\tiny{3}};
\node [below left] at (U-4) {\tiny{4}};
\node [below left] at (U-5) {\tiny{5}};
\node [below left] at (U-6) {\tiny{6}};

\node [below left= 0.5 cm] at (U-4) {\tiny{$U$}};

\pgfmathparse{25}
\let\angleShift\pgfmathresult

\pgfmathparse{10*sin(\angleShift/2)*sin((180-\angleShift)/2-30)}
\let\xshift\pgfmathresult

\pgfmathparse{10*sin(\angleShift/2)*cos((180-\angleShift)/2-30)}
\let\yshift\pgfmathresult

\foreach \i in {1,...,5}
  \pgfmathparse{5*cos(\i*72+\angleShift)}
  \let\cosAngle\pgfmathresult
  \pgfmathparse{5*sin(\i*72+\angleShift)}
  \let\sinAngle\pgfmathresult
  \fill [presentGray,opacity=0.5] (7+\xshift+\cosAngle,7+\yshift+\sinAngle) circle [radius=.1] coordinate (V-\i);

\filldraw [fill opacity=0.1,varBlue1, thin, draw=baseBlue]
  (V-1) \foreach \i in {2,...,5}{ -- (V-\i) } -- cycle;

\node [above right] at (V-1) {\tiny{4}};
\node [above right] at (V-2) {\tiny{6}};
\node [below right] at (V-2) {\tiny{5}};
\node [above right] at (V-3) {\tiny{1}};
\node [above right] at (V-4) {\tiny{2}};
\node [above right] at (V-5) {\tiny{3}};

\node [above right = 0.5 cm] at (V-1) {\tiny{$V$}};

\draw[thin,presentGray,opacity=0.5] (U-1) -- (V-3);
\draw[thin,presentGray,opacity=0.5] (U-2) -- (V-4);
\draw[thin,presentGray,opacity=0.5] (U-3) -- (V-5);
\draw[thin,presentGray,opacity=0.5] (U-4) -- (V-1);
\draw[thin,presentGray,opacity=0.5] (U-5) -- (V-2);
\draw[thin,presentGray,opacity=0.5] (U-6) -- (V-2);

\end{tikzpicture}
}
\caption{Different indexing for RO problems.}
\label{fig:indexing_RO_problems}
\end{figure}
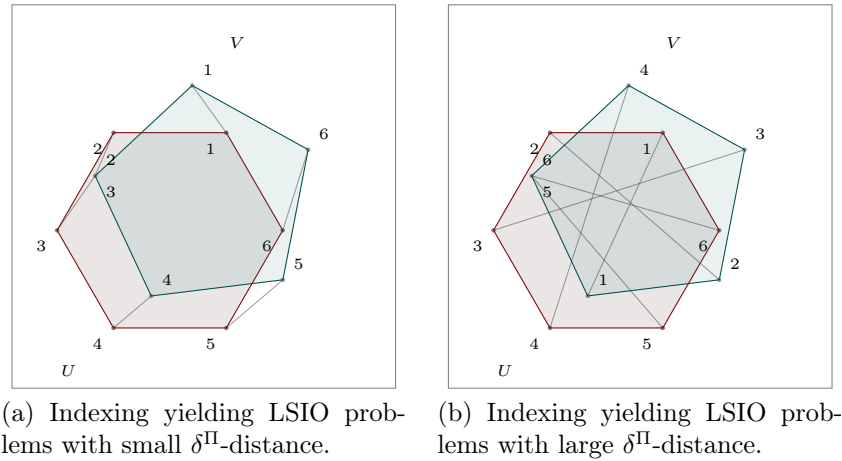




\end{example}



\subsection{Transformation theorem: single-constraint case}
To be able to index all elements from general uncertainty sets $U$ and $V$, which are uncountable subsets of $\mathbb{R}^{n}$, we choose the index set $T$ to be $\mathbb{R}^{n}$ itself. The explicit RO-LSIO transformation for $\RO(U)$ and $\RO(V)$ that we desire will yield LSIO problems whose distance in the $\delta^{\Pi}$-metric is precisely equal to $d_{H}(U,V)$. To help create such a transformation, note that repeating a constraint or adding a trivial constraint (e.g., $\langle 0_{n}, x \rangle \geq -\rho$, where $\rho>0$) can alter the distance between $\Pi$-equivalent LSIOs, but does not change the feasible solution set.



\begin{theorem}[RO-LSIO transformation for single-constraint RO problems]\label{thm:ROLSIOtransform}
Let $\rho>0$ be given. Let $\RO(U)$ and $\RO(V)$ be RO problems of the form \eqref{eq:singleRobust} with non-empty compact and convex uncertainty sets $U\subseteq\mathbb{R}^{n}$ and $V\subseteq\mathbb{R}^{n}$, cost function $c$, and right-hand side value $b$. Define $T:=\mathbb{R}^{n}$ as the common index set, with elements denoted by $t$. Define the LSIO problems $\pi_{U;V}:=(c,\sigma_{U;V})$ and $\pi_{V;U}:=(c,\sigma_{V;U})$ as follows:
\begin{equation*}
\sigma_{U;V}(t) := \begin{cases}
(t,b) & \mbox{if } t\in U, \\
(\argmin_{u\in U}\|u-t\|,b) & \mbox{if } t\in V\backslash U,\\
(0_{n},-\rho) &\mbox{if } t\notin U\cup V,
\end{cases}\\
\end{equation*}
\begin{equation*}
\sigma_{V;U}(t) := \begin{cases}
(t,b) & \mbox{if } t\in V,\\
(\argmin_{v\in V}\|v-t\|,b) & \mbox{if } t\in U\backslash V,\\
(0_{n},-\rho) &\mbox{if } t\notin U\cup V.
\end{cases}
\end{equation*}
Then $\pi_{U;V}$ and $\pi_{V;U}$ are well-defined and are equivalent to $\RO(U)$ and $\RO(V)$, respectively. Lastly, $\delta^{\Pi}(\pi_{U;V},\pi_{V;U})= d_{H}(U,V)$.
\end{theorem}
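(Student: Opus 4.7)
The plan is to verify the three claims in order: well-definedness, equivalence, and the distance computation. The first two are straightforward bookkeeping, and the whole substance of the theorem really lies in the case analysis for the distance claim.

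First I would argue well-definedness. Because $U$ is non-empty, compact, and convex, the Euclidean projection $\operatorname{argmin}_{u\in U}\|u-t\|$ exists and is unique for each $t\in \mathbb{R}^n$ (by strict convexity of $\|\cdot\|^2$ on a closed convex set); the same reasoning handles $V$. Since the three cases in each definition of $\sigma_{U;V}$ and $\sigma_{V;U}$ partition $T = \mathbb{R}^n$, each constraint system is a single-valued map from $T$ into $\mathbb{R}^{n+1}$.

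Next I would establish equivalence. For $\pi_{U;V}$, the constraints fall into three groups: (i) for $t\in U$ we get $\langle t, x\rangle \geq b$, which is exactly the collection of $\RO(U)$ constraints; (ii) for $t\in V\setminus U$ we get $\langle \operatorname{proj}_U(t), x\rangle \geq b$, which is a repetition of a constraint already in group (i) since $\operatorname{proj}_U(t) \in U$; and (iii) for $t\notin U\cup V$ we get $\langle 0_n, x\rangle \geq -\rho$, which is trivially satisfied for every $x$ since $\rho>0$. Thus $F(\sigma_{U;V}) = F(\RO(U))$, and because both problems have cost function $c$, they share the same optimal value, optimal solution set, and $\epsilon$-optimal solution set. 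The argument for $\pi_{V;U}$ and $\RO(V)$ is symmetric.

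Finally I would compute $\delta^{\Pi}(\pi_{U;V},\pi_{V;U})$. Since both problems use the same cost vector $c$, the distance reduces to $\delta^{\Sigma}(\sigma_{U;V},\sigma_{V;U})$. I would split the supremum over $t\in\mathbb{R}^n$ into four cases: on $U\cap V$ both systems give $(t,b)$, contributing $0$; on $U\setminus V$ the difference is $\|t-\operatorname{proj}_V(t)\| = d(t,V)$; on $V\setminus U$ the difference is $\|t-\operatorname{proj}_U(t)\| = d(t,U)$; and off $U\cup V$ both give $(0_n,-\rho)$, contributing $0$. Hence
\begin{equation*}
\delta^{\Sigma}(\sigma_{U;V},\sigma_{V;U}) = \max\Bigl\{\sup_{t\in U\setminus V} d(t,V),\ \sup_{t\in V\setminus U} d(t,U)\Bigr\}.
\end{equation*}
Because $d(t,V)=0$ whenever $t\in V$, the supremum over $U\setminus V$ coincides with the supremum over all of $U$, and similarly on the other side, giving exactly $d_H(U,V)$ by definition. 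The only subtle point, and the one I would be most careful about, is the off-$U\cup V$ case: the $\rho>0$ choice is essential so that the trivial constraint is genuinely redundant and so that $\sigma_{U;V}(t)$ and $\sigma_{V;U}(t)$ agree there, preventing the supremum from being artificially inflated by points of $T$ lying far from both uncertainty sets.
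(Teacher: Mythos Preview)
Your proposal is correct and follows essentially the same approach as the paper: the same three-part structure (well-definedness via uniqueness of the Euclidean projection onto a compact convex set, equivalence via the redundant/trivial nature of the extra constraints, and the distance computation via the four-region case analysis on $U\cap V$, $U\setminus V$, $V\setminus U$, and $(U\cup V)^c$). Your observation that the supremum over $U\setminus V$ of $d(t,V)$ equals the supremum over all of $U$ because $d(t,V)=0$ on $V$ is exactly the paper's closing step.
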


\begin{remark}
Here, we write $\argmin$ to refer to the unique element, rather than the (singleton) set, that minimizes the function in question. This abuse of notation for $\argmin$ is used in the later theorems as well.
\end{remark}

\begin{proof} \pf\ We first prove that $\pi_{U;V}$ is a well-defined LSIO problem and is equivalent to $\RO(U)$. We omit the analogous proof for $\pi_{V;U}$ and $\RO(V)$. Finally, we prove that $\delta^{\Pi}(\pi_{U;V},\pi_{V;U})= d_{H}(U,V)$.

To show that $\pi_{U;V}$ is well-defined, we must show that $\sigma_{U;V}$ maps \emph{every} element in $T:=\mathbb{R}^{n}$ to \emph{exactly one} constraint (i.e., vector in $\mathbb{R}^{n+1}$). When $t\in U$ or $t\notin U\cup V$, it is clear that $t$ maps to exactly one constraint, either $(t,b)$ or $(0_{n},-\rho)$, respectively. When $t\in V\backslash U$, the existence and uniqueness of $\argmin_{u\in U}\| u-t\|$ follows from $U$ being compact and convex, and $\|\cdot\|$ being the Euclidean norm.


To show that $\pi_{U;V}$ is equivalent to $\RO(U)$, it suffices to show that $\pi_{U;V}$ has exactly the same feasible solution set and cost function as $\RO(U)$. First, if $t\in U$, then $\sigma_{U;V}(t):=(t,b)$, so every constraint in $\RO(U)$ is listed in $\pi_{U;V}$. Second, since $\argmin_{u\in U}\| u-t\|\in U$, if $t\in V\backslash U$, then $\sigma_{U;V}(t):=(\argmin_{u\in U}\| u-t\|,b)$ is a redundant constraint. 
Lastly, if $t\notin U\cup V$, then $\sigma_{U;V}(t):=(0_{n},-\rho)$, which is a trivial constraint. Thus, every constraint in $\RO(U)$ is included in $\pi_{U;V}$ and all the other constraints of $\pi_{U;V}$ do not constrain the feasible solution set any further. Thus, the feasible solution set of $\pi_{U;V}$ is the same as $\RO(U)$, and since the cost function is the same by construction, it follows that the optimal value and optimal solution set of $\pi_{U;V}$ and $\RO(U)$ are also the same.


Lastly, we show that $\delta^{\Pi}(\pi_{U;V},\pi_{V;U}) = d_{H}(U,V)$. 
Observe that for every $t\in T$:
\begin{equation}\label{eq:four_cases_single_constraint_LSIO}
\|\sigma_{U;V}(t)-\sigma_{V;U}(t)\|=\begin{cases}
0 & \mbox{if } t\in U\cap V,\\
\min_{v\in V} \|v-t \| & \mbox{if } t \in U \backslash V, \\
\min_{u\in U} \|u-t \| & \mbox{if } t \in V \backslash U, \\
0 & \mbox{if } t \notin U\cup V,
\end{cases}
\end{equation}
where the minimum in the case $t \in U \backslash V $ follows because $\|t-\argmin_{v\in V}\|v-t\| \|= \min_{v\in V}\|v-t\|$, and analogously for the case $t \in V \backslash U$. Taking the supremum of \eqref{eq:four_cases_single_constraint_LSIO} over $t\in T$:
\begin{equation}\label{eq:actually_its_the_Hausdorff_distance}
\sup_{t\in T}\|\sigma_{U;V}(t)-\sigma_{V;U}(t)\|=
\max\left\{ \sup_{t\in U\cap V} 0 , \adjustlimits\sup_{t\in U\backslash V} \min_{v\in V} \|v-t \|, \adjustlimits\sup_{t\in V\backslash U} \min_{u\in U} \|u-t \|, \sup_{t\notin U\cup V} 0   \right\}.
\end{equation}
By definition, $\sup_{t\in T}\|\sigma_{U;V}(t)-\sigma_{V;U}(t)\| = \delta^{\Sigma}(\sigma_{U;V},\sigma_{V;U})$. Since the cost function is the same for both $\pi_{U;V}$ and $\pi_{V;U}$, it follows that $\sup_{t\in T}\|\sigma_{U;V}(t)-\sigma_{V;U}(t)\| = \delta^{\Pi}(\sigma_{U;V},\sigma_{V;U})$. Note that $\sup_{t\in U\backslash V} \min_{v\in V} \|v-t \| =  \sup_{t\in U}\min_{v\in V} \| v-t\|$ because if $t\in V$, then $\min_{v\in V}\|v-t\|=0$. Analogously, $\sup_{t\in V\backslash U} \min_{u\in U} \|u-t \| =  \sup_{t\in V}\min_{v\in U} \| u-t\|$. Finally, replacing the `$\min$' with `$\inf$' in \eqref{eq:actually_its_the_Hausdorff_distance}:
\begin{multline*}
\delta^{\Pi}(\pi_{U;V},\pi_{V;U})=\sup_{t\in T}\|\sigma_{U;V}(t)-\sigma_{V;U}(t)\|=\\
\max\left\{ \adjustlimits\sup_{t\in U} \inf_{v\in V} \|v-t \|, \adjustlimits\sup_{t\in V} \inf_{u\in U} \|u-t \|  \right\} = d_{H}(U,V).
\end{multline*}
\hfill $\square$



\end{proof}

\begin{figure}
\centering
\begin{tikzpicture}[scale=0.3]
\draw[draw, draw opacity=0.5] (0,17) rectangle (17,0);

\filldraw[varRed1, fill opacity = 0.1, thin,draw=baseRed] (9,8) circle [radius=3.5];
\node at (9,8-4) {\tiny{$U$}};

\filldraw[varBlue1, fill opacity = 0.1, thin,draw=baseBlue] (10,8.5) circle [radius=3.5];
\node at (10,8.5+4) {\tiny{$V$}};

\node[below] at (8.5,0) {$T=\mathbb{R}^{n}$};

\filldraw[presentGray, fill opacity = 0.1, thin, draw=presentGray] (9-2,8-2) circle [radius = 0.1];

\filldraw[presentGreen, fill opacity = 0.1, thin, draw=presentGreen] (9-2,8-2) circle [radius = 0.1];
\node[left] at (9-2,8-2) {\tiny{$(t,b)$}};

\node[right] at (9-2,8-2) {\tiny{$t\in U$}};

\filldraw[presentGray, fill opacity = 0.1, thin, draw=presentGray] (10+3.3,8) circle [radius = 0.1];

\filldraw[presentGreen, fill opacity = 0.1, thin, draw=presentGreen] (9+3.5,8) circle [radius = 0.1];
\node[left] at (9+3.5,8) {\tiny{$(\argmin, b)$}};

\draw[->,presentGray, opacity = 0.9] (10+3.3,8) -- (9+3.5,8);
\node[right] at (10+3.3,8) {\tiny{$t\in V\backslash U$}};

\filldraw[presentGray, fill opacity = 0.1, thin, draw=presentGray] (3,16) circle [radius = 0.1];

\filldraw[presentGreen, fill opacity = 0.1, thin, draw=presentGreen] (1,1) circle [radius = 0.1];
\node[right] at (1,1) {\tiny{$(0_{n},-\rho)$}};

\draw[->,presentGray, opacity = 0.9] (3,16) -- (1,1);
\node[right] at (3,16) {\tiny{$t\notin U\cup V$}};

\end{tikzpicture}

\caption{Defining $\pi_{U;V}$, indexing $T$. The 3 scenarios of when $t\in U$, $t\in V\backslash U$ and $t\notin U\cup V$}
\label{fig:piUV}
\end{figure}
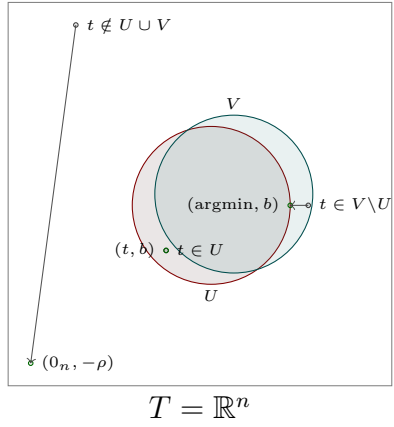

Figure \ref{fig:piUV} gives a picture of Theorem \ref{thm:ROLSIOtransform} and how we define $\pi_{U;V}$ and, analogously, $\pi_{V;U}$.\\

\subsection{Transformation theorem - multiple-constraints case}

To generalize Theorem \ref{thm:ROLSIOtransform}, we consider the case of robust problems with multiple constraints under uncertainty. We consider a special case of uncertainty for multiple constraints, as stated in the Introduction, namely, problems with fixed cost function and uncertainty sets $\natmbU$ and $\natmbV$ for the constraints. Here, $\natmbU$ and $\natmbV$  are the Cartesian products of compact and convex $U_{\alpha}$ and $V_{\alpha}$ which are subsets of $\mathbb{R}^{n}$, where $\alpha\in I$ for some constraint index set $I$. In this case, the set $T$ must be chosen as the set $I\times \mathbb{R}^{n}\times\mathbb{R}$, with elements $\mb{t}:=(\alpha,t,s)$ with $t\in \mathbb{R}^{n}$ and $s\in\mathbb{R}$. 

\begin{theorem}[RO-LSIO transformation for multiple-constraint problems]\label{thm:ROLSIOtransformMultiple}
Let $\rho>0$ be given. Let $\RO(\natmbU)$ and $\RO(\natmbV)$ be RO problems with uncertainty sets $\natmbU$ and $\natmbV$ with compact and convex $U_{\alpha}\subseteq\mathbb{R}^{n}$ and $V_{\alpha}\subseteq\mathbb{R}^{n}$ for all $\alpha\in I$ for some arbitrary index set $I$. Define $T:=I\times \mathbb{R}^{n}\times\mathbb{R}$ as the common index set, with elements denoted by $\mb{t}\in T$ which are tuples $\mb{t}:=(\alpha,t,s)$ where $\alpha\in I$, $t\in \mathbb{R}^{n}$, and $s\in\mathbb{R}$. Define the LSIO problems $\pi_{\natmbU;\natmbV}:=(c,\sigma_{\natmbU;\natmbV})$ and $\pi_{\natmbV;\natmbU}:=(c,\sigma_{\natmbV;\natmbU})$ as follows:
\begin{multline*}
\sigma_{\natmbU;\natmbV}(\mb{t}) = \sigma_{\natmbU,\natmbV}((\alpha,t,s)) := \begin{cases}
(t,s) & \mbox{if } (t,s)\in U_{\alpha},\\
\argmin_{(u_{a},u_{b})\in U_{\alpha}}d((u_{a},u_{b}),(t,s)) & \mbox{if } (t,s)\in V_{\alpha}\backslash U_{\alpha},\\
(0_{n},-\rho) &\mbox{if } (t,s)\notin U_{\alpha}\cup V_{\alpha},
\end{cases}
\end{multline*}
\begin{multline*}
\sigma_{\natmbV;\natmbU}(\mb{t})=\sigma_{\natmbV;\natmbU}((\alpha,t,s)) := \begin{cases}
(t,s) & \mbox{if } (t,s)\in V_{\alpha},\\
\argmin_{(v_{a},v_{b})\in V_{\alpha}} d((v_{a},v_{b}),(t,s)) & \mbox{if } (t,s)\in U_{\alpha}\backslash V_{\alpha},\\
(0_{n},-\rho) &\mbox{if } (t,s)\notin U_{\alpha}\cup V_{\alpha},
\end{cases}
\end{multline*}
where $d(\cdot,\cdot)$ refers to the Euclidean metric. Then $\pi_{\natmbU;\natmbV}$ and $\pi_{\natmbV;\natmbU}$ are well-defined and are equivalent to $\RO(\natmbU)$ and $\RO(\natmbV)$, respectively. Lastly, $\delta^{\Pi}(\pi_{\natmbU;\natmbV},\pi_{\natmbV;\natmbU}) = \natD(\natmbU,\natmbV):= \sup_{\alpha\in I}d_{H}(U_{\alpha},V_{\alpha})$.
\end{theorem}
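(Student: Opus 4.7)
The proof would closely mirror the single-constraint case in Theorem \ref{thm:ROLSIOtransform}, extended to handle the richer index set $T = I \times \mathbb{R}^n \times \mathbb{R}$. First I would verify well-definedness: for each $\mb{t} = (\alpha, t, s) \in T$, the first and third cases of $\sigma_{\natmbU;\natmbV}$ produce exactly one constraint by inspection. For the middle case, the existence and uniqueness of $\argmin_{(u_a,u_b) \in U_\alpha} d((u_a,u_b),(t,s))$ follows from $U_\alpha$ being non-empty, compact, and convex, combined with strict convexity of the Euclidean norm — this is just the projection of $(t,s)$ onto a closed convex set in the appropriate ambient space. The same reasoning handles $\sigma_{\natmbV;\natmbU}$.

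Next, to show that $\pi_{\natmbU;\natmbV}$ is equivalent to $\RO(\natmbU)$, I would verify that their feasible solution sets coincide constraint-class by constraint-class. For each $\alpha \in I$ and each $(t,s)\in U_\alpha$, the original RO constraint is listed in $\pi_{\natmbU;\natmbV}$ under the index $(\alpha,t,s)$ via the first case. Constraints produced by the second case have coefficient-rhs pair in $U_\alpha$ and so are redundant, while the third case produces the trivial constraint $\langle 0_n,x\rangle \geq -\rho$, which does not cut the feasible set. Since the cost function $c$ is the same by construction, the feasible solution set, optimal value, optimal solution set, and $\epsilon$-optimal solution set all match. The analogous argument handles $\pi_{\natmbV;\natmbU}$ and $\RO(\natmbV)$.

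Finally, for the distance equality I would split the analysis over the location of $(t,s)$ relative to $U_\alpha$ and $V_\alpha$. The quantity $\|\sigma_{\natmbU;\natmbV}(\mb{t}) - \sigma_{\natmbV;\natmbU}(\mb{t})\|$ equals $0$ on $U_\alpha \cap V_\alpha$ and on $(U_\alpha \cup V_\alpha)^c$, equals $\min_{(v_a,v_b)\in V_\alpha} d((v_a,v_b),(t,s))$ on $U_\alpha \setminus V_\alpha$, and equals $\min_{(u_a,u_b)\in U_\alpha} d((u_a,u_b),(t,s))$ on $V_\alpha\setminus U_\alpha$, in each case using the fact that distance to the projection equals the infimum distance to the set. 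Since $\delta^\Pi(\pi_{\natmbU;\natmbV},\pi_{\natmbV;\natmbU}) = \sup_{\mb{t}\in T}\|\sigma_{\natmbU;\natmbV}(\mb{t})-\sigma_{\natmbV;\natmbU}(\mb{t})\|$ (the cost functions agree), I would take the supremum factored as $\sup_{\alpha\in I}\sup_{(t,s)\in\mathbb{R}^{n+1}}$. As in the single-constraint proof, points in the intersection contribute zero and can be harmlessly absorbed into the domains of the two inner minima, producing $\max\{\sup_{(t,s)\in U_\alpha}\inf_{(v_a,v_b)\in V_\alpha}d(\cdot,\cdot),\;\sup_{(t,s)\in V_\alpha}\inf_{(u_a,u_b)\in U_\alpha}d(\cdot,\cdot)\} = d_H(U_\alpha,V_\alpha)$ at each $\alpha$. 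The outer supremum over $\alpha$ then yields $\natD(\natmbU,\natmbV)$.

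The substantive content is all present in Theorem \ref{thm:ROLSIOtransform}; the main obstacle here is bookkeeping — carefully separating the constraint-index $\alpha$ from the coefficient-rhs coordinates $(t,s)$, and observing that the supremum over $T = I\times\mathbb{R}^{n}\times\mathbb{R}$ factors cleanly so that the inner (per-$\alpha$) supremum reproduces the Hausdorff distance between $U_\alpha$ and $V_\alpha$ in $\mathbb{R}^{n+1}$. Compactness and convexity of each $U_\alpha$ and $V_\alpha$ are used in exactly one place, to guarantee uniqueness of the projection in the middle cases; dropping either hypothesis would make $\sigma_{\natmbU;\natmbV}$ ill-defined rather than changing the Hausdorff computation.
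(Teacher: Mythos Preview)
Your proposal is correct and follows essentially the same approach as the paper's own proof: establish well-definedness via uniqueness of the Euclidean projection onto each compact convex $U_\alpha$, verify equivalence by observing that the second and third cases produce only redundant or trivial constraints, and compute the $\delta^{\Pi}$-distance by the same four-case split over $(t,s)$ relative to $U_\alpha$ and $V_\alpha$, then factor the supremum over $T$ as $\sup_{\alpha\in I}\sup_{(t,s)}$ to recover $d_H(U_\alpha,V_\alpha)$ at each $\alpha$.
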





\begin{proof} \pf\ We first prove that $\pi_{\natmbU;\natmbV}$ is a well-defined LSIO problem and is equivalent to $\RO(\natmbU)$. We omit the analogous proof for $\pi_{\natmbV;\natmbU}$ and $\RO(\natmbV)$. Then, we prove that $\delta^{\Pi}(\pi_{\natmbU;\natmbV},\pi_{\natmbV;\natmbU}) = \natD(\natmbU,\natmbV)$.

To show that $\pi_{\natmbU;\natmbV}$ is well-defined, we must show that $\sigma_{\natmbU,\natmbV}$ maps \emph{every} element in $T$ to \emph{exactly one} constraint (i.e., vector in $\mathbb{R}^{n+1}$). For any given $\alpha$, when $(t,s)\in U_{\alpha}$ or $(t,s)\notin U_{\alpha}\cup V_{\alpha}$, it is clear that $(t,s)$ maps to exactly one constraint, either $(t,s)$ or $(0_{n},-\rho)$. When $(t,s)\in V_{\alpha}\backslash U_{\alpha}$, the existence and uniqueness of $\argmin_{(u_{a},u_{b})\in U_{\alpha}}d((u_{a},u_{b}),(t,s))$ follows from $U_{\alpha}$ being compact and convex, and $d(\cdot,\cdot)$ being the Euclidean norm.%

To show that $\pi_{\natmbU;\natmbV}$ is equivalent to $\RO(\natmbU)$, it suffices to show that $\pi_{\natmbU;\natmbV}$ has exactly the same feasible solution set and cost function as $\RO(\natmbU)$. Let $\alpha\in I$ be chosen. First, if $(t,s)\in U_{\alpha}$, then $\sigma_{\natmbU;\natmbV}((\alpha,t,s)):=(t,s)$, thus, since we chose arbitrary $\alpha$, every constraint in $\RO(\natmbU)$ is listed in $\pi_{\natmbU;\natmbV}$. Second, since $\argmin_{(u_{a},u_{b})\in U_{\alpha}}d((u_{a},u_{b}),(t,s))\in U_{\alpha}$, if $(t,s)\in V_{\alpha}\backslash U_{\alpha}$, then $\sigma_{\natmbU;\natmbV}((\alpha,t,s)):=\argmin_{(u_{a},u_{b})\in U_{\alpha}}d((u_{a},u_{b}),(t,s))$ is a redundant constraint. Lastly, if $(t,s)\notin U_{\alpha}\cup V_{\alpha}$, then $\sigma_{\natmbU;\natmbV}((\alpha,t,s)):=(0_{n},-\rho)$, which is a trivial constraint. Thus, every constraint in $\RO(\natmbU)$ is included in $\pi_{\natmbU;\natmbV}$ and all the other constraints of $\pi_{\natmbU;\natmbV}$ do not constrain the feasible solution set any further. Thus, the feasible solution set of $\pi_{\natmbU;\natmbV}$ is the same as $\RO(\natmbU)$, and since the cost function is the same by construction, it follows that the optimal value and optimal solution set of $\pi_{\natmbU;\natmbV}$ and $\RO(\natmbU)$ are also the same.

Lastly, we show that $\delta^{\Pi}(\pi_{\natmbU;\natmbV},\pi_{\natmbV;\natmbU}) = d_{H}(\natmbU,\natmbV)$. 
Observe that for every $(\alpha,t,s)\in T$:
\begin{multline}\label{eq:four_cases_multiple_constraint_LSIO}
\|\sigma_{\natmbU;\natmbV}((\alpha,t,s))-\sigma_{\natmbV;\natmbU}((\alpha,t,s))\|=
\begin{cases}
0 & \mbox{if } (t,s)\in U_{\alpha}\cap V_{\alpha}\\
\min_{(v_{a},v_{b})\in V_{\alpha}} d((v_{a},v_{b}),(t,s)) & \mbox{if } (t,s) \in U_{\alpha} \backslash V_{\alpha} \\
\min_{(u_{a},u_{b})\in U_{\alpha}} d((u_{a},u_{b}),(t,s)) & \mbox{if } (t,s) \in V_{\alpha} \backslash U_{\alpha} \\
0 & \mbox{if } (t,s) \notin U_{\alpha}\cup V_{\alpha},
\end{cases}
\end{multline}
where the minimum in the case $t \in U_{\alpha}\backslash V_{\alpha} $ follows because $d( (t,s),\argmin_{(v_{a},v_{b})\in V_{\alpha}} d((v_{a},v_{b}),(t,s)) ) = \min_{(v_{a},v_{b})\in V_{\alpha}} d((v_{a},v_{b}),(t,s))$, by definition, and analogously for the case $t \in V_{\alpha}\backslash U_{\alpha}$. Taking the supremum of \eqref{eq:four_cases_multiple_constraint_LSIO} over $\mb{t}=(\alpha,t,s)\in T$:
\begin{multline}\label{eq:actually_its_the_Multiple_Hausdorff_distance}
\sup_{\mb{t}\in T}\|\sigma_{\natmbU;\natmbV}((\alpha,t,s))-\sigma_{\natmbV;\natmbU}((\alpha,t,s))\|=\\
\sup_{\alpha\in I}\left\{ \sup_{(t,s)\in U_{\alpha} \cap V_{\alpha}} 0 , \adjustlimits\sup_{(t,s)\in U_{\alpha}\backslash V_{\alpha} } \min_{(v_{a},v_{b}) \in V_{\alpha}}d((v_{a},v_{b}),(t,s)), \adjustlimits\sup_{(t,s)\in V_{\alpha} \backslash U_{\alpha}} \min_{(u_{a},u_{b})\in U_{\alpha}}  d((u_{a},u_{b}),(t,s)), \sup_{(t,s)\notin U_{\alpha} \cup V_{\alpha}} 0   \right\}.
\end{multline}

By definition, $\sup_{\mb{t}\in T}\|\sigma_{U;V}(\mb{t})-\sigma_{V;U}(\mb{t})\|=\delta^{\Sigma}(\sigma_{\natmbU;\natmbV},\sigma_{\natmbV;\natmbU})$. Since the cost function is the same for both $\pi_{\natmbU;\natmbV}$ and $\pi_{\natmbV;\natmbU}$, it follows that $\sup_{\mb{t}\in T}\|\sigma_{U;V}(\mb{t})-\sigma_{V;U}(\mb{t})\|=\delta^{\Pi}(\pi_{\natmbU;\natmbV},\pi_{\natmbV;\natmbU})$. 

Note that $\adjustlimits\sup_{(t,s)\in U_{\alpha}\backslash V_{\alpha}} \min_{(v_{a},v_{b}) \in V_{\alpha}} d((v_{a},v_{b}),(t,s)) = \adjustlimits \sup_{(t,s)\in U_{\alpha}}\min_{(v_{a},v_{b}) \in V_{\alpha}} d((v_{a},v_{b}),(t,s))$ because if $(t,s)\in V_{\alpha}$, then $ \min_{(v_{a},v_{b}) \in V_{\alpha}} d((v_{a},v_{b}),(t,s))=0$. 

Analogously, $\adjustlimits\sup_{(t,s)\in V_{\alpha} \backslash U_{\alpha}} \min_{(u_{a},u_{b})\in U_{\alpha}}  d((u_{a},u_{b}),(t,s)) = \adjustlimits\sup_{(t,s)\in V_{\alpha}} \min_{(u_{a},u_{b})\in U_{\alpha}}  d((u_{a},u_{b}),(t,s))$. Finally, replacing the `$\min$' with `$\inf$' in \eqref{eq:actually_its_the_Multiple_Hausdorff_distance}:
\begin{multline*}
\delta^{\Pi}(\pi_{\natmbU;\natmbV},\pi_{\natmbV;\natmbU})=\sup_{\mb{t}\in T}\|\sigma_{\natmbU;\natmbV}(\mb{t})-\sigma_{\natmbV;\natmbU}(\mb{t})\|=\\
\sup_{\alpha\in I}\left\{ \adjustlimits\sup_{(t,s)\in U_{\alpha}} \inf_{(v_{a},v_{b}) \in V_{\alpha}}d((v_{a},v_{b}),(t,s)), \adjustlimits\sup_{(t,s)\in V_{\alpha}} \inf_{(u_{a},u_{b})\in U_{\alpha}}  d((u_{a},u_{b}),(t,s)) \right\}= \sup_{\alpha\in I} d_{H}(U,V).
\end{multline*}
\hfill $\square$
\end{proof}

\section{Lipschitz constant invariance}\label{sec:Lipschitz_constant_invariance}

We first define several set mappings and present two technical lemmas that we will use in our proof of Lipschitz constant invariance.

\begin{restatable}[Set mapping definitions from \citet{canovas2006lipschitz}]{definition}{setmappingdefinition}\label{def:setMappings}
Define the following set mappings for $\pi:=(c,\sigma)=(c,(a_{t},b_{t})_{t\in T})$:
\begin{eqnarray*}
A(\sigma) &:=& \mathrm{conv}( \{ a_{t} , t\in T \} ),\\
R(\pi) &:=& \{ -b_{t} , t\in T ; \nu(\pi) \},\\
Z^{-}(\pi) &:=& \mathrm{conv}( \{ a_{t} , t\in T ; -c\} ),\\
H(\sigma) &:=& \mathrm{conv}\left( \left\{ \left( \begin{array}{c} a_{t} \\ b_{t} \end{array} \right), t\in T \right\} \right) + \left\{ \left( \begin{array}{c} 0_{n} \\ -\mu \end{array} \right), \mu \geq 0 \right\}.
\end{eqnarray*}
\end{restatable}

\begin{restatable}[Constants from \citet{canovas2006lipschitz}]{definition}{constantDefinition}\label{def:constantDefinition} For any $\pi_{0}:= (c^{0},\sigma_{0})\in \interior(\Pi_{s})$, define the following quantities for $0<\epsilon<\delta^{\Pi}(\pi_{0},\bd(\Pi_{s}))$:
\begin{eqnarray*}
\varphi(\lambda)=\varphi_{*}(\lambda) &:=& \sqrt{1+\lambda^{2}},\\
\psi(\alpha) & := & (1+\alpha)\sqrt{1+\alpha^{2}},\\
\widehat{\rho}(\pi_{0}) &:=& \frac{\sup R(\pi_{0}) }{ d_{*}(0_{n},\bd(Z^{-}(\pi_{0}))) },\\
\beta(\pi_{0},\epsilon) &:=& \frac{\psi(\widehat{\rho}(\pi_{0}))}{\delta^{\Sigma}(\sigma_{0},\Sigma_{i}) - \epsilon}, \\
\gamma(\pi_{0},\epsilon) &:=& \varphi_{*}(0)( \widehat{\rho}(\pi_{0}) + \epsilon\beta(\pi_{0},\epsilon) ) + \| c^{0} \|_{*} \beta(\pi_{0},\epsilon), \\
\mu(\pi_{0},\epsilon) &:=& \varphi(0)\frac{ \sup R(\pi_{0}) + \epsilon\max\{1,\gamma(\pi_{0},\epsilon)\}}{ d(0_{n},\bd(Z^{-}(\pi_{0}))) - \epsilon},\\
L(\pi_{0},\epsilon) &:=& \varphi_{*}(0) \left( (\epsilon + \|c^{0}\|)\frac{\psi(\mu(\pi_{0},\epsilon))}{\delta^{\Sigma}(\sigma_{0},\Sigma_{i})-\epsilon} + \mu(\pi_{0},\epsilon)\right).
\end{eqnarray*}
%
\end{restatable}
%



\begin{restatable}[Distance to ill-posedness]{lemma}{distillposed}\label{lem:distIllPosed}  If $\pi:=(c,\sigma_{0}) \in \Pi_{f}$, then
\begin{equation*}
\delta^{\Sigma}(\sigma_{0},\Sigma_{i}) = d(0_{n+1}, \bd(H(\sigma_{0}))).
\end{equation*}
\end{restatable}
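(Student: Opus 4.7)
The plan is to reduce the distance-to-infeasibility question about $\sigma_{0}$ to a distance-to-boundary question about a single convex set $H(\sigma_{0})\subseteq\mathbb{R}^{n+1}$. The two ingredients I will need are a geometric characterization of (in)feasibility in terms of $H$, and the Hausdorff Lipschitz continuity of $\sigma\mapsto H(\sigma)$.

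First I would prove the characterization
\[
\sigma\in\Sigma_{i}\;\Longleftrightarrow\;0_{n+1}\in\interior(H(\sigma)),\qquad \sigma\in\bd(\Sigma_{f})\;\Longleftrightarrow\;0_{n+1}\in\bd(H(\sigma)).
\]
The forward implication for infeasibility uses the LSIO analogue of Farkas' lemma: $\sigma\in\Sigma_{i}$ yields finitely many $\lambda_{t}\geq 0$ with $\sum\lambda_{t}a_{t}=0$ and $\sum\lambda_{t}b_{t}>0$; after normalizing $\sum\lambda_{t}=1$ this puts $(0_{n},\nu)$ with $\nu>0$ inside $\conv\{(a_{t},b_{t})\}$, and combined with the downward ray $\{(0_{n},-\mu):\mu\geq 0\}$ that is built into $H(\sigma)$ this forces an entire neighbourhood of $0_{n+1}$ into $H(\sigma)$. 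Conversely, if $(0_{n},\epsilon)\in H(\sigma)$ with $\epsilon>0$ one reads off a Farkas certificate of infeasibility directly.

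Next I would prove Hausdorff Lipschitz continuity of $H$, i.e.\ $d_{H}(H(\sigma_{1}),H(\sigma_{2}))\leq \delta^{\Sigma}(\sigma_{1},\sigma_{2})$, by the standard matched-coefficients trick: for $p=\sum\lambda_{t}(a_{t}^{1},b_{t}^{1})+(0_{n},-\mu)\in H(\sigma_{1})$, the companion point $p'=\sum\lambda_{t}(a_{t}^{2},b_{t}^{2})+(0_{n},-\mu)$ lies in $H(\sigma_{2})$ and satisfies $\|p-p'\|\leq\sum\lambda_{t}\|(a_{t}^{1},b_{t}^{1})-(a_{t}^{2},b_{t}^{2})\|\leq\delta^{\Sigma}(\sigma_{1},\sigma_{2})$. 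With these two ingredients the lemma follows by a two-sided argument. For $\delta^{\Sigma}(\sigma_{0},\Sigma_{i})\geq d(0_{n+1},\bd(H(\sigma_{0})))$, assume $\eta:=d(0_{n+1},\bd(H(\sigma_{0})))>0$ (otherwise the inequality is trivial). Note that $\sigma_{0}\in\Sigma_{f}$ and Step 1 give $0\notin\interior(H(\sigma_{0}))$; combined with $\eta>0$ this forces $0\notin H(\sigma_{0})$, so $d(0,H(\sigma_{0}))=d(0,\bd(H(\sigma_{0})))=\eta$. For any $\sigma\in\Sigma_{i}$ with $\epsilon:=\delta^{\Sigma}(\sigma_{0},\sigma)<\eta$, Step 1 places $0\in\interior(H(\sigma))$ and Step 2 gives $0=d(0,H(\sigma))\geq d(0,H(\sigma_{0}))-\epsilon\geq\eta-\epsilon>0$, a contradiction. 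For the opposite inequality $\leq$, pick $p^{*}\in\bd(H(\sigma_{0}))$ attaining $\|p^{*}\|=\eta$ and define $\sigma'$ by shifting every constraint $(a_{t}^{0},b_{t}^{0})\mapsto(a_{t}^{0},b_{t}^{0})-p^{*}$. Then $H(\sigma')=H(\sigma_{0})-p^{*}$, so $0\in\bd(H(\sigma'))$ and $\delta^{\Sigma}(\sigma_{0},\sigma')=\|p^{*}\|=\eta$; by Step 1, $\sigma'\in\bd(\Sigma_{f})\subseteq\cl(\Sigma_{i})$, so arbitrarily small further perturbations of $\sigma'$ land in $\Sigma_{i}$, yielding $\delta^{\Sigma}(\sigma_{0},\Sigma_{i})\leq\eta$.

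The main obstacle is the rigorous proof of the Step 1 characterization, because $T$ may be uncountable and $\conv\{(a_{t},b_{t})\}$ (hence $H(\sigma)$) need not be closed in that setting; consequently one has to be careful about whether ``interior'' and ``boundary'' are taken in $H(\sigma)$ or in $\cl(H(\sigma))$, and to verify that the Farkas-type separation theorem used for the LSIO setting applies without requiring closure of the characteristic cone. Once this characterization is in hand, Step 2 and the two-sided contradiction/construction arguments are essentially routine.
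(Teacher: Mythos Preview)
Your Step 1 characterization $\sigma\in\Sigma_{i}\Leftrightarrow 0_{n+1}\in\interior(H(\sigma))$ has two genuine gaps. First, the Farkas certificate you invoke (finitely many $\lambda_t\geq 0$ with $\sum\lambda_t a_t=0$, $\sum\lambda_t b_t>0$) characterizes \emph{strong} infeasibility, not infeasibility: take $T=(0,1]$ in $\mathbb{R}$ with constraints $tx\geq 1$. This system is infeasible, yet every finite subsystem is feasible, so no certificate exists; one computes $H(\sigma)=(0,1]\times(-\infty,1]$ and hence $0_{2}\in\bd(H(\sigma))$, not the interior. Your $\geq$ argument, which explicitly uses $\sigma\in\Sigma_i\Rightarrow 0\in\interior(H(\sigma))$, therefore breaks. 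Second, even when the certificate does exist, your claim that ``$(0_n,\nu)\in\conv\{(a_t,b_t)\}$ combined with the downward ray forces an entire neighbourhood of $0_{n+1}$ into $H(\sigma)$'' is wrong: the Minkowski sum only yields the vertical line $\{0_n\}\times(-\infty,\nu]$, not an open set. For the single constraint $0\cdot x\geq 1$ the certificate holds but $H=\{0\}\times(-\infty,1]$ has empty interior. Your acknowledged ``main obstacle'' about closures of $\conv$ is real, but these two issues are separate and more fundamental.

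The paper sidesteps both problems by introducing the class $\Sigma_{si}$ of strongly infeasible systems and reducing the lemma to the identity $\delta^{\Sigma}(\sigma_0,\Sigma_i)=\delta^{\Sigma}(\sigma_0,\bd(\Sigma_{si}))$. It then quotes Theorem 6 of C\'anovas et al.\ (2005) for $\delta^{\Sigma}(\sigma_0,\bd(\Sigma_{si}))=d(0_{n+1},\bd(H(\sigma_0)))$, and combines Corollary 1 and Theorem 5(iii) of the same reference to get $\delta^{\Sigma}(\sigma_0,\Sigma_i)=\delta^{\Sigma}(\sigma_0,\bd(\Sigma_f))$ together with $\bd(\Sigma_f)\subseteq\bd(\Sigma_{si})$. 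Your Lipschitz-of-$H$ Step 2 and the shift construction in the $\leq$ direction are in spirit the mechanisms behind those cited theorems, so the outline is salvageable---but the correct anchor is $\Sigma_{si}$ rather than $\Sigma_i$, and the pointwise ``$0$ in interior'' test must be replaced by distance-to-boundary statements that remain valid when $H$ is lower-dimensional.
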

\begin{proof}
\pf\ In Appendix \ref{app:proof_Lemma_dist_Ill_Posed}. \hfill $\square$
\end{proof}

\begin{restatable}[Constants in Definition \ref{def:constantDefinition} are well-defined]{lemma}{greaterThanZero}\label{lem:greaterThanZero} For any $\pi_{0}:= (c^{0},\sigma_{0})\in \interior(\Pi_{s})$, we have:
\begin{enumerate}
\item $d(0_{n+1},\bd(H(\sigma_{0})))> 0$,
\item $d(0_{n},\bd(Z^{-}(\pi_{0}))) > 0$,
\item $\delta^{\Pi}(\pi_{0},\bd(\Pi_{s}))>0$,
\item Let any $0 < \epsilon < \delta^{\Pi}(\pi_{0},\bd(\Pi_{s}))$ be given, then $\delta^{\Sigma}(\sigma_{0},\Sigma_{i})>\epsilon$. 
\end{enumerate}
In particular, all the values in Definition \ref{def:constantDefinition} are well-defined for $0< \epsilon < \delta^{\Pi}(\pi_{0},\bd(\Pi_{s}))$.
\end{restatable}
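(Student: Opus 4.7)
The plan is to prove the four items in the order (3)$\to$(4)$\to$(1)$\to$(2), feeding each into the next; once these hold, every constant in Definition~\ref{def:constantDefinition} is a finite algebraic combination of continuous and strictly positive expressions (the only sensitive points are the denominators of $\widehat{\rho}$, $\beta$, $\mu$ and $L$, which are controlled exactly by items (1), (2) and (4)), so the ``In particular'' clause is an immediate corollary.

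Item (3) is purely topological: $\bd(\Pi_s)$ is closed and disjoint from $\interior(\Pi_s)$, so $\pi_0\in\interior(\Pi_s)$ forces $\delta^{\Pi}(\pi_0,\bd(\Pi_s))>0$ (with the convention $\delta^{\Pi}(\pi_0,\varnothing)=+\infty$ in the degenerate case $\Pi_s=\Pi$). For (4), I would fix $0<\epsilon<\delta^{\Pi}(\pi_0,\bd(\Pi_s))$ and exploit a ``cost-preserving'' pairing: for any $\sigma\in\Sigma_i$, the problem $\pi:=(c^{0},\sigma)$ lies in $\Pi_i\subseteq\Pi\setminus\Pi_s$. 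Because $\Pi$ carries the linear structure inherited from $\mathbb{R}^{n}\times\Sigma$ and $\delta^{\Pi}$ is translation-invariant, the segment from $\pi_0\in\interior(\Pi_s)$ to $\pi\notin\Pi_s$ must cross $\bd(\Pi_s)$, giving $\delta^{\Pi}(\pi_0,\pi)\geq\delta^{\Pi}(\pi_0,\bd(\Pi_s))>\epsilon$. Since $\pi_0$ and $\pi$ share the cost function $c^{0}$, $\delta^{\Pi}(\pi_0,\pi)=\delta^{\Sigma}(\sigma_0,\sigma)$; taking the infimum over $\sigma\in\Sigma_i$ yields (4). Item (1) then follows instantly from Lemma~\ref{lem:distIllPosed}: $d(0_{n+1},\bd(H(\sigma_0)))=\delta^{\Sigma}(\sigma_0,\Sigma_i)$, which is positive by (4) (applied with any admissible $\epsilon$, or equivalently by letting $\epsilon\to 0^{+}$).

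The main obstacle is (2), since neither topology nor Lemma~\ref{lem:distIllPosed} translates directly into a statement about $Z^{-}(\pi_0)$. The plan here is to invoke the standard LSIO characterization --- in the Goberna--L\'opez tradition, implicit throughout the C\'anovas--L\'opez--Parra line of work and, I expect, recorded in Appendix~\ref{app:set_LSIO_problems_relationships} --- that $\pi\in\interior(\Pi_s)$ decomposes into two conditions: $\pi\in\interior(\Pi_f)$ (controlling feasibility under perturbation of the constraint system, cf.~Theorem~\ref{thm:lsc_SS_interior}) together with $0_{n}\in\interior(Z^{-}(\pi))$, a stable dual condition expressing that $-c$ lies strictly inside the conical hull of $\{a_t\}$ and hence that the optimal value remains finite and attained under small perturbations of $(c,\sigma)$. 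Once this characterization is in place, $0_{n}\in\interior(Z^{-}(\pi_0))$ is exactly the assertion $d(0_{n},\bd(Z^{-}(\pi_0)))>0$, establishing (2) and completing the proof.
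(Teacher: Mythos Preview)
Your route through items (3), (4), (1) is correct and genuinely more elementary than the paper's. The paper leans on three external results from \citet{canovas2006distance}: Proposition~2(ii) for (1), Proposition~3(i) for (2), and Theorem~2 --- the exact formula $\delta^{\Pi}(\pi_0,\bd(\Pi_s))=\min\{d(0_{n+1},\bd(H(\sigma_0))),\,d(0_n,\bd(Z^{-}(\pi_0)))\}$ --- for (3) and (4). You replace the latter with pure metric topology for (3), a clean segment/connectedness argument for (4) (valid once one observes that the case $\delta^{\Sigma}(\sigma_0,\sigma)=+\infty$ is vacuous), and then read off (1) from Lemma~\ref{lem:distIllPosed}. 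For (2) you end up citing essentially the same external characterization as the paper; note that it is Proposition~3(i) of \citet{canovas2006distance}, not anything in Appendix~\ref{app:set_LSIO_problems_relationships}.

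There is, however, a real gap in the ``In particular'' clause. The denominator of $\mu(\pi_0,\epsilon)$ is $d(0_n,\bd(Z^{-}(\pi_0)))-\epsilon$, so well-definedness requires $d(0_n,\bd(Z^{-}(\pi_0)))>\epsilon$, not merely $>0$. Your item~(2) delivers only the latter, and nothing else in your list relates $d(0_n,\bd(Z^{-}(\pi_0)))$ to $\delta^{\Pi}(\pi_0,\bd(\Pi_s))$. The paper closes this gap precisely via Theorem~2 of \citet{canovas2006distance}, which gives $d(0_n,\bd(Z^{-}(\pi_0)))\geq\delta^{\Pi}(\pi_0,\bd(\Pi_s))>\epsilon$. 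To stay within your elementary framework you would need to prove that half of Theorem~2 directly: for instance, given any $w$ with $-w\in\bd(Z^{-}(\pi_0))$ and $\|w\|$ close to $d(0_n,\bd(Z^{-}(\pi_0)))$, translate every $a_t$ by $w$ and $c^{0}$ by $-w$ to obtain a problem $\pi$ with $\delta^{\Pi}(\pi_0,\pi)=\|w\|$ and $0_n\notin\interior(Z^{-}(\pi))$; the characterization you already invoke for (2) then places $\pi\notin\interior(\Pi_s)$, and your own segment argument forces $\delta^{\Pi}(\pi_0,\bd(\Pi_s))\leq\|w\|$.
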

\begin{proof}
\pf\ In Appendix \ref{app:proof_Lemma_greater_than_zero}. \hfill $\square$
\end{proof}


Now we proceed to prove that given any two LSIO problems $\pi_{1},\pi_{2}\in \interior(\Pi_{s})$, such that $\pi_{1}\sim_{\Pi}\pi_{2}$, then, the Lipschitz constants in Theorem \ref{thm:LipschitzContinuityLSIO} satisfy $L(\pi_{1},\epsilon)$=$L(\pi_{2},\epsilon)$. 

\begin{lemma}[Lipschitz constant invariance]\label{lem:LipschitzInv} Let $\pi_{1}=(c^{1},\sigma_{1})\in \interior(\Pi_{s})$ and $\pi_{2}=(c^{2},\sigma_{2})\in \interior(\Pi_{s})$. Suppose $\pi_{1}\sim_{\Pi}\pi_{2}$.
Then:
\begin{enumerate}
\item $\delta^{\Pi}(\pi_{1},\bd(\Pi_{s})) = \delta^{\Pi}(\pi_{2},\bd(\Pi_{s}))$.
\item $\delta^{\Sigma}(\sigma_{1},\Sigma_{{i}}) = \delta^{\Sigma}(\sigma_{2},\Sigma_{{i}})$ and furthermore, if $0<  \epsilon < \delta^{\Pi}(\pi_{1},\bd(\Pi_{s}))=\delta^{\Pi}(\pi_{2},\bd(\Pi_{s}))$, then  $L(\pi_{1},\epsilon) = L(\pi_{2},\epsilon)$.
%
\end{enumerate}

\end{lemma}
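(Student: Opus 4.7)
The plan is to exploit the observation that every ingredient of $L(\pi_{0},\epsilon)$ in Definition~\ref{def:constantDefinition} depends only on the set-valued objects $A(\sigma),R(\pi),Z^{-}(\pi),H(\sigma)$ of Definition~\ref{def:setMappings} and on $\|c\|$, each of which is invariant under $\Pi$-equivalence. Indeed, $\pi_{1}\sim_{\Pi}\pi_{2}$ means $c^{1}=c^{2}$ and $\{(a_{t}^{1},b_{t}^{1}):t\in T\}=\{(a_{t}^{2},b_{t}^{2}):t\in T\}$ as subsets of $\mathbb{R}^{n+1}$, and $\Pi$-equivalence preserves the feasible set, hence $\nu(\pi_{1})=\nu(\pi_{2})$. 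Direct inspection of the convex-hull definitions then yields $A(\sigma_{1})=A(\sigma_{2})$, $R(\pi_{1})=R(\pi_{2})$, $Z^{-}(\pi_{1})=Z^{-}(\pi_{2})$, and $H(\sigma_{1})=H(\sigma_{2})$.

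For part~2, applying Lemma~\ref{lem:distIllPosed} gives $\delta^{\Sigma}(\sigma_{j},\Sigma_{i})=d(0_{n+1},\bd(H(\sigma_{j})))$ for $j=1,2$, and combining with $H(\sigma_{1})=H(\sigma_{2})$ immediately produces $\delta^{\Sigma}(\sigma_{1},\Sigma_{i})=\delta^{\Sigma}(\sigma_{2},\Sigma_{i})$. The equality $L(\pi_{1},\epsilon)=L(\pi_{2},\epsilon)$ then follows mechanically once part~1 is in hand: since $\|c^{1}\|=\|c^{2}\|$, $\sup R(\pi_{1})=\sup R(\pi_{2})$, $d_{*}(0_{n},\bd(Z^{-}(\pi_{1})))=d_{*}(0_{n},\bd(Z^{-}(\pi_{2})))$, and the transcendental pieces $\varphi_{*}(\cdot),\psi(\cdot)$ agree tautologically, substituting these invariants through the cascade $\widehat{\rho}\to\beta\to\gamma\to\mu\to L$ of Definition~\ref{def:constantDefinition} gives the claim, provided the admissible range of $\epsilon$ is common to both problems.

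Part~1 is the real obstacle, because $\delta^{\Pi}$ involves a supremum over indices $t\in T$ and is indexing-sensitive, as the example illustrating LSIO distances in Section~\ref{sec:Prelim} warns. My strategy is a re-indexing argument. First, observe that solvability is $\Pi$-equivalence invariant (two $\Pi$-equivalent problems share feasible set, optimal value, and optimal solution set), so $\Pi_{s}$, and therefore $\bd(\Pi_{s})$, is saturated under $\sim_{\Pi}$. Next, given any $\pi_{3}=(c^{3},\sigma_{3})\in\bd(\Pi_{s})$, I construct $\pi_{3}'\sim_{\Pi}\pi_{3}$ with $\delta^{\Pi}(\pi_{1},\pi_{3}')\leq \delta^{\Pi}(\pi_{2},\pi_{3})$ as follows: using that the underlying constraint \emph{sets} of $\pi_{1}$ and $\pi_{2}$ agree, select, for each $t\in T$, an index $s(t)\in T$ with $(a_{s(t)}^{2},b_{s(t)}^{2})=(a_{t}^{1},b_{t}^{1})$ and set $\pi_{3}'(t):=\pi_{3}(s(t))$, $c^{3'}:=c^{3}$. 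The pointwise estimate $\|(a_{t}^{1},b_{t}^{1})-\pi_{3}'(t)\|=\|(a_{s(t)}^{2},b_{s(t)}^{2})-\pi_{3}(s(t))\|\leq \delta^{\Sigma}(\sigma_{2},\sigma_{3})$ upgrades to $\delta^{\Pi}(\pi_{1},\pi_{3}')\leq \delta^{\Pi}(\pi_{2},\pi_{3})$; taking the infimum over $\pi_{3}\in\bd(\Pi_{s})$ and invoking saturation of $\bd(\Pi_{s})$ yields $\delta^{\Pi}(\pi_{1},\bd(\Pi_{s}))\leq \delta^{\Pi}(\pi_{2},\bd(\Pi_{s}))$, with the reverse inequality following by symmetry.

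The subtle step I expect to grind through carefully is verifying that $\pi_{3}'\sim_{\Pi}\pi_{3}$: the naive construction only forces $\{\pi_{3}'(t):t\in T\}\subseteq\{\pi_{3}(t):t\in T\}$, and $s$ need not be surjective, so some constraint values of $\pi_{3}$ may go missing. The remedy is to modify $\pi_{3}'$ on a carefully chosen collection of indices: for each $u^{*}$ whose value $\pi_{3}(u^{*})$ is absent from $\{\pi_{3}'(t):t\in T\}$, find $t^{*}\in T$ with $(a_{t^{*}}^{1},b_{t^{*}}^{1})=(a_{u^{*}}^{2},b_{u^{*}}^{2})$ (available because the shared constraint set contains $(a_{u^{*}}^{2},b_{u^{*}}^{2})$) and override $\pi_{3}'(t^{*}):=\pi_{3}(u^{*})$; the pointwise distance at $t^{*}$ then becomes $\|(a_{t^{*}}^{1},b_{t^{*}}^{1})-\pi_{3}(u^{*})\|=\|(a_{u^{*}}^{2},b_{u^{*}}^{2})-\pi_{3}(u^{*})\|\leq \delta^{\Pi}(\pi_{2},\pi_{3})$, preserving the bound. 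With careful bookkeeping---iterating the overrides within each fiber of the constraint map so that replacing one preimage of a value does not unhit others---the set equality $\{\pi_{3}'(t):t\in T\}=\{\pi_{3}(t):t\in T\}$ is secured, so $\pi_{3}'\sim_{\Pi}\pi_{3}$; saturation then gives $\pi_{3}'\in\bd(\Pi_{s})$, and Parts~1 and~2 combine to deliver $L(\pi_{1},\epsilon)=L(\pi_{2},\epsilon)$.
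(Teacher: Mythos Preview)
Your treatment of part~2 is correct and matches the paper: both argue that $H(\sigma)$, $Z^{-}(\pi)$, $R(\pi)$, and $\|c\|$ are $\Pi$-equivalence invariants, invoke Lemma~\ref{lem:distIllPosed} for $\delta^{\Sigma}(\sigma_{j},\Sigma_{i})$, and then push the invariance through the cascade in Definition~\ref{def:constantDefinition}.

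Part~1, however, has a genuine gap. Your re-indexing construction cannot in general produce $\pi_{3}'\sim_{\Pi}\pi_{3}$ with $\delta^{\Pi}(\pi_{1},\pi_{3}')\le\delta^{\Pi}(\pi_{2},\pi_{3})$, because $\Sigma$-equivalence only requires that the constraint \emph{sets} coincide, not the multiplicities. Take $T=\{1,2,3\}$, $\sigma_{1}=(A,A,B)$, $\sigma_{2}=(A,B,B)$ (so $\sigma_{1}\sim_{\Sigma}\sigma_{2}$), and $\sigma_{3}=(X,Y,Z)$ with $A=(0,0)$, $B=(10,0)$, $X=(0,0.1)$, $Y=(10,0.1)$, $Z=(10,-0.1)$. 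Then $\delta^{\Sigma}(\sigma_{2},\sigma_{3})=0.1$, but every $\sigma_{3}'$ with $\{\sigma_{3}'(t)\}=\{X,Y,Z\}$ forces two of its entries to sit at index positions where $\sigma_{1}$ equals $A$, and one of those two must be $Y$ or $Z$, giving $\delta^{\Sigma}(\sigma_{1},\sigma_{3}')\ge\|A-Y\|\approx 10$. No ``careful bookkeeping within fibers'' can repair this: the fiber $T_{A}^{1}$ has two slots but $\{\pi_{3}(u):u\in T_{A}^{2}\}=\{X\}$ supplies only one permissible value, so hitting all three of $X,Y,Z$ is impossible within the distance constraint. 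Separately, the inference ``$\Pi_{s}$ is saturated, therefore $\bd(\Pi_{s})$ is saturated'' is not valid in general metric spaces; boundaries depend on the topology, and $\delta^{\Pi}$ is indexing-sensitive.

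The paper sidesteps the whole issue. It invokes Theorem~2 of \citet{canovas2006distance}, which for $\pi\in\cl(\Pi_{s})$ gives the closed-form
\[
\delta^{\Pi}(\pi,\bd(\Pi_{s}))=\min\bigl\{d(0_{n+1},\bd(H(\sigma))),\,d(0_{n},\bd(Z^{-}(\pi)))\bigr\}.
\]
Since both $H(\sigma)$ and $Z^{-}(\pi)$ depend only on the constraint \emph{set} and on $c$, the right-hand side is manifestly $\Pi$-equivalence invariant, and part~1 follows in one line. You should replace your re-indexing argument by an appeal to this characterization.
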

\begin{proof} \pf\

\begin{enumerate}
\item By the definition of $\pi_{1}\sim_{\Pi} \pi_{2}$, we have $c_{1}=c_{2}$, and $\sigma_{1}:=(a^{1}_{t},b^{1}_{t})_{t\in T}\sim_{\Sigma}\sigma_{2}:=(a^{2}_{t},b^{2}_{t})_{t\in T}$, that is, $\{ (a^{1}_{t},b^{1}_{t}), t\in T \} = \{ (a^{2}_{t},b^{2}_{t}), t\in T \}$. By the definition of $H(\cdot)$, it is clear that $H(\sigma_{1})=H(\sigma_{2})$, which implies that $d(0_{n+1},\bd(H(\sigma_{1})))=d(0_{n+1},\bd(H(\sigma_{2})))$. Similarly, $Z^{-}(\pi_{1})=Z^{-}(\pi_{2})$, which implies that $d(0_{n},\bd(Z^{-}(\pi_{1})))=d(0_{n},\bd(Z^{-}(\pi_{2})))$. Since both $\pi_{1}$ and $\pi_{2}\in\interior(\Pi_{s})\subseteq \cl(\Pi_{s})$, we can apply Theorem 2 from \citet{canovas2006distance}. This theorem states that, for $\pi:=(c,\sigma)\in \cl(\Pi_{s})$,
\begin{equation*}
\delta^{\Pi}(\pi,\bd(\Pi_{s})) = \min \left\{d(0_{n+1},\bd(H(\sigma))),d(0_{n},\bd(Z^{-}(\pi)))  \right\},
\end{equation*}
which implies that $\delta^{\Pi}(\pi_{1},\bd(\Pi_{s})) = \delta^{\Pi}(\pi_{2},\bd(\Pi_{s}))$.

\item As shown in the proof of part 1 of this Lemma, $d(0_{n+1},\bd(H(\sigma_{1})))=d(0_{n+1},\bd(H(\sigma_{2})))$, so by Lemma \ref{lem:distIllPosed}, $\delta^{\Sigma}(\sigma_{1},\Sigma_{i}) = \delta^{\Sigma}(\sigma_{2},\Sigma_{i})$. Finally, $L(\pi_1,\epsilon) = L(\pi_2,\epsilon)$ for $\Pi$-equivalent problems $\pi_1, \pi_2$ follows directly from the definition of $L(\cdot, \epsilon)$ and all the supporting quantities in Definitions~\ref{def:setMappings} and~\ref{def:constantDefinition}. \hfill $\square$
%

%
%
\end{enumerate}
\end{proof}


\section{Lipschitz continuity of the optimal value}\label{sec:Lipschitz_continuity_optimal_value}

We now state and prove Lipschitz continuity for the optimal value in both the single-constraint and multiple-constraint case.
\subsection{Single-constraint}
\begin{theorem}[Lipschitz continuity for single-constraint RO] \label{thm:LipschitzSingleRO}
Let $\RO(U)$ be a RO problem of the form \eqref{eq:singleRobust} with non-empty compact and convex uncertainty set $U\subseteq\mathbb{R}^{n}$, fixed cost function $c$, and right-hand side value $b$. Suppose that:
\begin{enumerate}
\item $\RO(U)$ satisfies the strong Slater condition, with strong Slater constant $\rho>0$,
\item $F^{\opt}(\RO(U))$ is non-empty and bounded.
\end{enumerate}
Then, there exists a LSIO $\pi_{U}:=(c,\sigma_{U})\in \interior(\Pi_{s})$, where:
\begin{equation*}
\sigma_{U}(t) := \begin{cases}
(t,b) & \mbox{if } t\in U,\\
(0_{n},-\rho) & \mbox{if } t\notin U,
\end{cases}
\end{equation*}
such that for any $\epsilon$ satisfying $0 < \epsilon< \delta^{\Pi}(\pi_{U},\bd(\Pi_{s}))$, and for all compact, convex uncertainty sets $V\subseteq\mathbb{R}^{n}$ satisfying $d_{H}(U,V)< \epsilon$,
\begin{equation*}
| \nu(\RO(U)) - \nu(\RO(V)) | \leq L(U,\epsilon) d_{H}(U,V).
\end{equation*}
The Lipschitz constant $L(U,\epsilon):=L(\pi_{U},\epsilon)$ can be calculated via Definition \ref{def:constantDefinition} using $\pi_{U}:=(c,\sigma_{U})$ and the given $\epsilon$.
\end{theorem}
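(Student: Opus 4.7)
The plan is to reduce this single-constraint RO result to the LSIO Lipschitz result of Theorem~\ref{thm:LipschitzContinuityLSIO}, routed through the RO-LSIO transformation of Theorem~\ref{thm:ROLSIOtransform} and stabilized by the Lipschitz constant invariance of Lemma~\ref{lem:LipschitzInv}. The role of $\pi_U$ is that of a ``canonical'' LSIO representative of $\RO(U)$, built independently of any comparison set $V$; so I must first verify $\pi_U$ lies in $\interior(\Pi_s)$, and then show that the transformed problem $\pi_{U;V}$ appearing in Theorem~\ref{thm:ROLSIOtransform} is $\Pi$-equivalent to $\pi_U$, so that $L(\pi_{U;V},\epsilon)=L(\pi_U,\epsilon)$ and, crucially, the Lipschitz constant does not depend on $V$.

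First I would verify that $\pi_U$ is well-defined and equivalent to $\RO(U)$: well-definedness is immediate from the case split, and equivalence follows because the constraints $\langle t, x\rangle \ge b$ for $t \in U$ are exactly those of $\RO(U)$, while $\langle 0_n, x\rangle \ge -\rho$ is trivially satisfied. Consequently $F^{\opt}(\pi_U) = F^{\opt}(\RO(U))$ is non-empty and bounded, and $\nu(\pi_U)=\nu(\RO(U))$. For $\pi_U \in \interior(\Pi_s)$, I would use that the strong Slater point $x_0$ for $\RO(U)$ satisfies $\langle t,x_0\rangle \ge b+\rho$ for every $t \in U$ and $\langle 0_n,x_0\rangle = 0 \ge -\rho+\rho$, so strong Slater transfers to $\pi_U$ with the same constant $\rho$; this gives $\pi_U \in \interior(\Pi_f)$ by Theorem~\ref{thm:lsc_SS_interior}. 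Combined with non-empty boundedness of $F^{\opt}(\pi_U)$ and the characterizations of $\Pi_s$ collected in Appendix~\ref{app:set_LSIO_problems_relationships}, this upgrades to $\pi_U \in \interior(\Pi_s)$.

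Next, fix $\epsilon$ with $0 < \epsilon < \delta^{\Pi}(\pi_U,\bd(\Pi_s))$ and an admissible $V$ with $d_H(U,V) < \epsilon$. Invoking Theorem~\ref{thm:ROLSIOtransform} produces LSIO problems $\pi_{U;V}$ and $\pi_{V;U}$ equivalent to $\RO(U)$ and $\RO(V)$ respectively, with $\delta^{\Pi}(\pi_{U;V},\pi_{V;U}) = d_H(U,V)$. The key observation is that $\pi_{U;V} \sim_{\Pi} \pi_U$: both share the cost $c$, and the set of constraints of $\pi_{U;V}$ is $\{(u,b) : u \in U\} \cup \{(0_n,-\rho)\}$, exactly as for $\pi_U$, because every ``$\argmin$'' constraint generated for $t \in V \setminus U$ is of the form $(u,b)$ with $u \in U$ and so is already present. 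Lemma~\ref{lem:LipschitzInv} then gives $\delta^{\Pi}(\pi_{U;V},\bd(\Pi_s)) = \delta^{\Pi}(\pi_U,\bd(\Pi_s))$ (so $\pi_{U;V} \in \interior(\Pi_s)$ and the chosen $\epsilon$ remains admissible) and $L(\pi_{U;V},\epsilon) = L(\pi_U,\epsilon)$.

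Finally I would apply Theorem~\ref{thm:LipschitzContinuityLSIO} with reference problem $\pi_0 := \pi_{U;V}$, $\pi_1 := \pi_{U;V}$ and $\pi_2 := \pi_{V;U}$; the hypothesis $\delta^{\Pi}(\pi_j,\pi_0) \le \epsilon$ holds trivially for $j=1$ and by construction for $j=2$. This yields $|\nu(\pi_{U;V}) - \nu(\pi_{V;U})| \le L(\pi_{U;V},\epsilon)\,\delta^{\Pi}(\pi_{U;V},\pi_{V;U})$, which, after substituting $\nu(\pi_{U;V})=\nu(\RO(U))$, $\nu(\pi_{V;U})=\nu(\RO(V))$, $\delta^{\Pi}(\pi_{U;V},\pi_{V;U})=d_H(U,V)$, and $L(\pi_{U;V},\epsilon)=L(\pi_U,\epsilon)=:L(U,\epsilon)$, delivers the stated bound. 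I expect the main obstacle to be the verification that $\pi_U \in \interior(\Pi_s)$ from the hypotheses: strong Slater alone only gives $\interior(\Pi_f)$ via Theorem~\ref{thm:lsc_SS_interior}, and upgrading to $\interior(\Pi_s)$ requires pairing it with non-empty boundedness of $F^{\opt}$ through the appendix's LSIO classification facts. The remainder is essentially bookkeeping on top of Sections~\ref{sec:RO_LSIO_Transform} and~\ref{sec:Lipschitz_constant_invariance}.
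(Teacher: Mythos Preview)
Your proposal is correct and follows essentially the same seven-step argument as the paper: build $\pi_U$, show $\pi_U\in\interior(\Pi_s)$, invoke Theorem~\ref{thm:ROLSIOtransform} to get $\pi_{U;V}$ and $\pi_{V;U}$, observe $\pi_{U;V}\sim_\Pi\pi_U$, apply Lemma~\ref{lem:LipschitzInv} and Theorem~\ref{thm:LipschitzContinuityLSIO} with $\pi_0=\pi_1=\pi_{U;V}$, $\pi_2=\pi_{V;U}$, and conclude. One small correction: the upgrade from $\interior(\Pi_f)$ to $\interior(\Pi_s)$ is not supplied by Appendix~\ref{app:set_LSIO_problems_relationships} (which only records non-emptiness facts and the $\Pi_\infty$ remark); the paper instead invokes Proposition~1 parts (vi)--(vii) of \citet{canovas2006ill}, which is exactly the pairing of $\interior(\Pi_f)$ with non-empty bounded $F^{\opt}$ that you anticipated.
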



\begin{proof}
\pf\
The proof proceeds in seven steps. Figure \ref{fig:IdeaEnd} gives a conceptual illustration of the proof idea. Let $T:=\mathbb{R}^{n}$.
\begin{enumerate}
\item \emph{Write $\RO(U)$ as the LSIO problem $\pi_{U}$.} 
%
First, we show that $\pi_{U}$ and $\RO(U)$ are equivalent. Note that the cost functions for the two problems are the same, and that $\pi_{U}$ contains all the constraints of $\RO(U)$ plus some additional trivial constraints. Thus $\pi_{U}$ and $\RO(U)$ have the same feasible solution set, optimal value, and optimal solution sets. Furthermore, since $\RO(U)$ satisfies the strong Slater condition with Slater constant $\rho$, and since $\langle 0_{n},x\rangle \geq -\rho +\rho = 0$ for all $x$, it follows that $\pi_{U}$ satisfies the strong Slater condition with Slater constant $\rho$.


Second, we show that $\pi_{U}\in\interior(\Pi_{s})$. By assumption, $F^{\opt}(\RO(U))$ is non-empty and bounded, so $F^{\opt}(\pi_{U})$ is non-empty and bounded. Since $\pi_{U}$ satisfies the strong Slater condition, by Theorem \ref{thm:lsc_SS_interior} we have $\pi_{U}\in\interior(\Pi_{f})$. By Proposition 1 part (vi) from \citet{canovas2006ill}, $c\in\interior(\mbox{cone}(\{ a_{t}\mbox{ : } t\in T \}))$, which, by part (vii) of the same proposition, implies that $\pi_{U}\in\interior(\Pi_{s})$.

\item \emph{Choose $V$ in an $\epsilon$-neighborhood of $U$.}
Let $\epsilon>0$ be given satisfying $0 < \epsilon< \delta^{\Pi}(\pi_{U},\bd(\Pi_{s}))$. Such an $\epsilon$ exists because $\delta^{\Pi}(\pi_{U},\bd(\Pi_{s}))>0$ by Lemma \ref{lem:greaterThanZero}. Let $V$ be any compact and convex set in $\mathbb{R}^n$ satisfying $d_{H}(U,V)\leq \epsilon < \delta^{\Pi}(\pi_{U},\bd(\Pi_{s}))$. Such a $V$ exists; e.g., the choice $V:=U+B(0,\epsilon/2)$ satisfies $d_{H}(U,V)<\epsilon$.

\item \emph{Define $\pi_{U;V}:=(c,\sigma_{U;V})$.} Define the LSIO problem $\pi_{U;V}:=(c,\sigma_{U;V})$ as in Theorem \ref{thm:ROLSIOtransform}. By the same theorem, $\pi_{U;V}$ is well-defined. Since for each $t\in V\backslash U$, $\argmin_{u\in U}\|u-t\|$ is an element of $U$, every constraint in $\pi_{U;V}$ is in $\pi_{U}$ and vice versa. Thus, $\pi_{U;V}\sim_{\Pi}\pi_{U}$. Furthermore, since $\pi_{U;V}\sim_{\Pi}\pi_{U}$ and $\pi_{U}$ satisfies the strong Slater condition with Slater constant $\rho$, it follows that $\pi_{U;V}$ also satisfies the strong Slater condition with constant $\rho>0$.


\item  \emph{Define $\pi_{V;U}:=(c,\sigma_{V;U})$.} Define the LSIO problem $\pi_{V;U}:=(c,\sigma_{V;U})$ as in Theorem \ref{thm:ROLSIOtransform}. By the same theorem, $\pi_{V;U}$ is well-defined and is equivalent to $\RO(V)$.

\item \emph{By Theorem \ref{thm:ROLSIOtransform}, $\delta^{\Pi}(\pi_{U;V},\pi_{V;U}) = d_{H}(U,V) < \epsilon$.}

\item \emph{Apply Theorem \ref{thm:LipschitzContinuityLSIO} with $\pi_{U;V}\mapsto \pi_{0}, \pi_{1}$ and $\pi_{V;U}\mapsto \pi_{2}$.} We check that the assumptions of Theorem \ref{thm:LipschitzContinuityLSIO} are satisfied:
\begin{enumerate}
\item \emph{$\pi_{U;V}\in \interior(\Pi_{s})$.} This proof is identical to the proof that $\pi_{U}\in\interior(\Pi_{s})$, given in Step 1.

\item \emph{$\epsilon < \delta^{\Pi}(\pi_{U;V},\bd(\Pi_{s}))$.} Recall that $\pi_{U;V}$ and $\pi_{U}$ are both in $\interior(\Pi_{s})$, have non-empty bounded optimal solution sets, and are $\Pi$-equivalent to each other. Thus, we have  $\epsilon < \delta^{\Pi}(\pi_{U},\bd(\Pi_{s})) =\delta^{\Pi}(\pi_{U;V},\bd(\Pi_{s}))$, where the inequality is by assumption and the equality is by Lemma \ref{lem:LipschitzInv}.

\item \emph{$\delta^{\Pi}(\pi_{U;V},\pi_{V;U})\leq\epsilon$ and $\delta^{\Pi}(\pi_{U;V},\pi_{U;V})=0\leq\epsilon$.} The first inequality comes from Step 5, and the second inequality is trivial.
\end{enumerate}

Thus, the assumptions of Theorem \ref{thm:LipschitzContinuityLSIO} are satisfied, so:
\begin{equation}\label{eq:paper_proof_Lipschitz_pre_single_RO}
| \nu(\pi_{U;V}) - \nu(\pi_{V;U}) | \leq L(\pi_{U;V},\epsilon) \delta^{\Pi}(\pi_{U;V},\pi_{V;U}),
\end{equation}
where $L(\pi_{U;V},\epsilon)$ is as defined in Definition \ref{def:constantDefinition}.

\item \emph{Lastly, show that the Lipschitz constant is independent of the choice of $V$.} 
Applying Lemma \ref{lem:LipschitzInv} to $\Pi$-equivalent problems $\pi_{U}$ and $\pi_{U;V}$, it follows that $L(\pi_{U},\epsilon) = L(\pi_{U;V},\epsilon)$. Now define $L(U,\epsilon):= L(\pi_{U},\epsilon)$. By recognizing that $\nu(\pi_{U;V})=\nu(\RO(U))$ and $\nu(\pi_{V;U})=\nu(\RO(V))$, and using $\delta^{\Pi}(\pi_{U;V},\pi_{V;U}) = d_{H}(U,V)$, we obtain the final result:
\begin{align*}
| \nu(\mb{RO}(U)) - \nu(\mb{RO}(V)) | & = | \nu(\pi_{U;V}) - \nu(\pi_{V;U}) |,
& \text{(By definition of $\pi_{U;V}$ and $\pi_{V;U}$)}\\
& \leq L(\pi_{U;V},\epsilon) \delta^{\Pi}(\pi_{U;V},\pi_{V;U}),
& \text{(By Theorem \ref{thm:LipschitzContinuityLSIO})}\\
& = L(\pi_{U;V},\epsilon) d_{H}(U,V),
& \text{(By Theorem \ref{thm:ROLSIOtransform})} \\
& = L(U,\epsilon) d_{H}(U,V).
& \text{(Since $\pi_{U;V}\sim_{\Pi}\pi_{U}$)}
\end{align*}
\hfill $\square$
\end{enumerate}
\end{proof}

\begin{figure}
\begin{center}
\begin{tikzpicture}[scale=0.3]


\draw[draw, draw opacity=0.5] (0,32) rectangle (17,15);

\node[below] at (8.5,15) {$\Pi$};

\pgfmathparse{sqrt(5^2+6^2)}
\let\radiusarc\pgfmathresult

\pgfmathparse{atan(6/5)}
\let\angle\pgfmathresult

\pgfmathparse{0}
\let\smallradius\pgfmathresult

\pgfmathparse{-\smallradius*sin(\angle)}
\let\yshift\pgfmathresult

\pgfmathparse{\smallradius*cos(\angle)}
\let\xshift\pgfmathresult

\node at (1,31) {$\Pi_{s}$};



\draw[varRed1, fill opacity=0.1, thin, draw=baseRed, line cap = round]  (5-\xshift,26-\yshift) arc (-\angle:-\angle+40:\radiusarc-\smallradius);

\draw[varRed1, fill opacity=0.1, thin, draw=baseRed, line cap = round]  (5-\xshift,26-\yshift) arc (-\angle:-\angle-2:\radiusarc-\smallradius);

\filldraw[varRed1, fill opacity = 0.1, thin,draw=baseRed] (5,26) circle [radius=0.1];
\draw[draw=baseBlue,very thin] (5,26) circle [radius=2];

\pgfmathparse{-\radiusarc*sin(-\angle+40)}
\let\yend\pgfmathresult

\pgfmathparse{\radiusarc*cos(-\angle+40)}
\let\xend\pgfmathresult

\draw[very thin,baseRed] (19+8-3.5,16+7) -- (\xend,32-\yend);

\pgfmathparse{-\radiusarc*sin(-\angle-2)}
\let\yotherend\pgfmathresult

\pgfmathparse{\radiusarc*cos(-\angle-2)}
\let\xotherend\pgfmathresult

\draw[very thin,baseRed] (19+8-3.5,16+7) -- (\xotherend,32-\yotherend);

\draw[very thin, presentGray] (2,29) -- (5,26);
\node[above] at (2,28.6) {\tiny{$\pi_{U;V}$}};


\pgfmathparse{-\radiusarc*sin(-\angle+20)}
\let\ymid\pgfmathresult

\pgfmathparse{\radiusarc*cos(-\angle+20)}
\let\xmid\pgfmathresult


\filldraw[varRed1, fill opacity = 0.1, thin,draw=baseRed] (\xmid,32-\ymid) circle [radius=0.1];
\node[left] at (\xmid,32-\ymid+1) {\tiny{$\pi_{U}$}};

\draw[very thin, presentGray] (\xmid-0.5,32-\ymid+0.7) -- (\xmid,32-\ymid);

\pgfmathparse{32-\ymid}
\let\yneighmid\pgfmathresult
\pgfmathparse{32-\ymid+1}
\let\ylabelmid\pgfmathresult

\draw[draw=baseBlue,very thin] (\xmid,32-\ymid) circle [radius=2];




\pgfmathparse{sqrt(4^2+(32-25)^2)}
\let\radiusarc\pgfmathresult

\pgfmathparse{atan((32-25)/4)}
\let\angle\pgfmathresult

\pgfmathparse{0}
\let\smallradius\pgfmathresult

\pgfmathparse{-\smallradius*sin(\angle)}
\let\yshift\pgfmathresult

\pgfmathparse{\smallradius*cos(\angle)}
\let\xshift\pgfmathresult


\draw[varBlue1, fill opacity=0.1, thin, draw=baseBlue, line cap = round]  (4-\xshift,25-\yshift) arc (-\angle:-\angle+8:\radiusarc-\smallradius);
\draw[varBlue1, fill opacity=0.1, thin, draw=baseBlue, line cap = round]  (4-\xshift,25-\yshift) arc (-\angle:-\angle-10:\radiusarc-\smallradius);

\pgfmathparse{-\radiusarc*sin(-\angle+5)}
\let\yend\pgfmathresult

\pgfmathparse{\radiusarc*cos(-\angle+5)}
\let\xend\pgfmathresult

\pgfmathparse{-\radiusarc*sin(-\angle+8)}
\let\yendAlt\pgfmathresult

\pgfmathparse{\radiusarc*cos(-\angle+8)}
\let\xendAlt\pgfmathresult

\draw[very thin,baseBlue] (19+9-3.5,16+7.5)-- (\xendAlt,32-\yendAlt);

\filldraw[varBlue1, fill opacity = 0.1, thin,draw=baseBlue] (\xend,32-\yend) circle [radius=0.1];

\draw[very thin, presentGray] (1,26) -- (\xend,32-\yend);
\node at (1,26.5) {\tiny{$\pi_{V;U}$}};

\pgfmathparse{-\radiusarc*sin(-\angle-10)}
\let\yend\pgfmathresult

\pgfmathparse{\radiusarc*cos(-\angle-10)}
\let\xend\pgfmathresult

\draw[very thin,baseBlue] (19+9-3.5,16+7.5)-- (\xend,32-\yend);

\pgfmathparse{32-\ymid}
\let\yneighmid\pgfmathresult
\pgfmathparse{32-\ymid+1}
\let\ylabelmid\pgfmathresult

\pgfmathparse{sqrt(\xmid^2+\ymid^2)+2}
\let\Piradius\pgfmathresult
\filldraw[fill=presentGray, fill opacity = 0.1, draw opacity=0] (0,32) -- (\Piradius,32cm) arc (0:-90:\Piradius);
\draw (\Piradius,32cm) arc (0:-90:\Piradius);

\filldraw[varBlue1, fill opacity = 0.1, thin,draw=baseBlue] (4,25) circle [radius=0.1];





\draw[draw, draw opacity=0.5] (18,32) rectangle (35,15);
\node[below] at (8.5+18,15) {$\mathbb{R}^{2n+1}$};

\draw [->] (19,15) -- (19,32);
\node[left] at (19,32) {\tiny{$\mathbf{c}$}};

\draw [->] (18,16) -- (35,16);
\node[below] at (35,16) {\tiny{$(\mathbf{a},b)$}};

\filldraw[varRed1, fill opacity = 0.1, thin,draw=baseRed] (19+8,16+7) circle [radius=3.5];
\node at (19+8-4,16+7-2) {\tiny{$U$}};

\filldraw[varBlue1, fill opacity = 0.1, thin,draw=baseBlue] (19+9,16+7.5) circle [radius=3.5];
\node at (19+9+4,16+7.5+2) {\tiny{$V$}};
\end{tikzpicture}
\caption{Proof idea; the picture here also illustrates cost function uncertainty, as will be discussed in the next sub-section. The thicker red and blue lines in $\Pi$-space represent LSIO problems that are $\Pi$-equivalent. Note that all the problems that are $\Pi$-equivalent to $\pi_{U}$ have the same distance from $\bd(\Pi_{s})$}
\label{fig:IdeaEnd}
\end{center}
\end{figure}
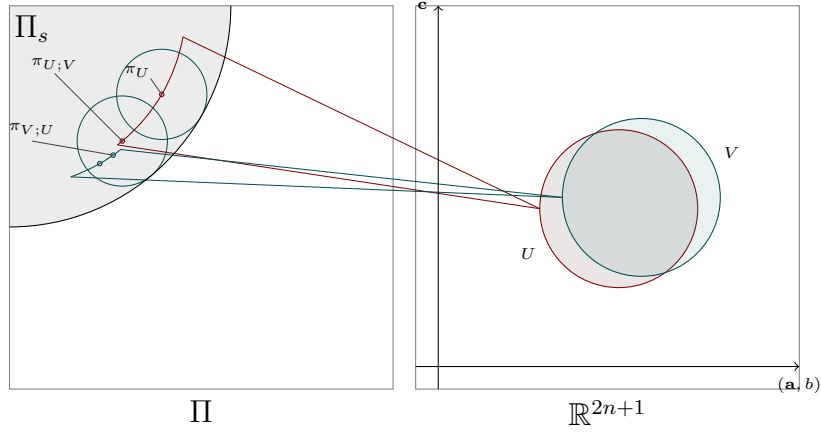


\subsection{Multiple-constraint}

\begin{restatable}[Lipschitz continuity for multiple-constraint RO]{theorem}{LipschitzMultipleRO}\label{thm:Lipschitz_multiple_coeff_cost_RHS_unc}

Let $\RO(\natmbU)$ be a RO problem of the form \eqref{eq:multipleRobustFormINITIAL} with non-empty compact and convex uncertainty set $U_{\alpha}$ in each constraint, indexed by $\alpha\in I$, with fixed cost function $c$. Suppose that:
\begin{enumerate}
\item $\RO(\natmbU)$ satisfies the strong Slater condition, with strong Slater constant $\rho>0$,
\item $F^{\opt}(\RO(\natmbU))$ is non-empty and bounded,
\item $\sup\{ -b^{\alpha}, \alpha\in I \}<+\infty$.
\end{enumerate}
Then, there exists a LSIO problem $\pi_{\natmbU}\in\interior(\Pi_{s})$, where:
\begin{equation*}
\sigma_{\natmbU}(\mb{t}) = \sigma_{\natmbU}((\alpha,t,s)) := \begin{cases}
(t, s) & \mbox{if } (t,s)\in U_{\alpha},\\
(0_{n},-\rho) & \mbox{if } (t,s)\notin U_{\alpha},
\end{cases}
\end{equation*}
such that for any $\epsilon$ satisfying $0<\epsilon < \delta^{\Pi}(\pi_{\natmbU},\bd(\Pi_{s}))$, and for all $\natmbV\subseteq\bigotimes_{\alpha\in I}(\mathbb{R}^{n}\times\mathbb{R})$ with compact and convex $V_{\alpha}$ satisfying $\sup_{\alpha\in I} d_{H}(U_{\alpha},V_{\alpha})<\epsilon$, we have:
\begin{equation*}\label{eq:multiple_dist}
|\nu(\RO(\natmbU))-\nu(\RO(\natmbV))| \leq L(\natmbU,\epsilon) \natD(\natmbU,\natmbV).
\end{equation*}
The Lipschitz constant $L(\natmbU,\epsilon):=L(\pi_{\natmbU},\epsilon)$ can be calculated via Definition \ref{def:constantDefinition} using $\pi_{\natmbU}:=(c,\sigma_{\natmbU})$ and the given $\epsilon$.

\end{restatable}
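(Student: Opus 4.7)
The plan is to mimic the seven-step scheme of the single-constraint proof (Theorem \ref{thm:LipschitzSingleRO}), replacing every invocation of the single-constraint transformation (Theorem \ref{thm:ROLSIOtransform}) by the multiple-constraint transformation (Theorem \ref{thm:ROLSIOtransformMultiple}), and using the third hypothesis $\sup\{-b^{\alpha}, \alpha \in I\} < +\infty$ to keep the supporting quantities in Definition \ref{def:constantDefinition} finite. Throughout, the common index set is $T := I \times \mathbb{R}^{n} \times \mathbb{R}$.

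\medskip

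\textbf{Step 1 (reformulation).} I would first verify that the candidate LSIO $\pi_{\natmbU} := (c, \sigma_{\natmbU})$ is equivalent to $\RO(\natmbU)$: the extra indices $(\alpha,t,s) \notin U_{\alpha}$ contribute only the trivial constraint $(0_{n}, -\rho)$, so the feasible set, optimal value and optimal solution set all coincide with those of $\RO(\natmbU)$. The strong Slater condition transfers to $\pi_{\natmbU}$ with the same constant $\rho$, since the trivial constraints are satisfied with slack $\rho$ by every $x$. To conclude $\pi_{\natmbU} \in \interior(\Pi_{s})$, I combine: (i) strong Slater with Theorem \ref{thm:lsc_SS_interior} to get $\pi_{\natmbU} \in \interior(\Pi_{f})$; (ii) boundedness and non-emptiness of $F^{\opt}(\RO(\natmbU))$ together with Proposition 1(vi)-(vii) of \citet{canovas2006ill} to get $c \in \interior(\mathrm{cone}(\{a_{t} : t \in T\}))$ and hence $\pi_{\natmbU} \in \interior(\Pi_{s})$. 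Assumption 3 is exactly what guarantees $\sup R(\pi_{\natmbU}) < +\infty$, ensuring the quantities $\widehat{\rho}(\pi_{\natmbU})$, $\mu(\pi_{\natmbU},\epsilon)$ and $L(\pi_{\natmbU},\epsilon)$ in Definition \ref{def:constantDefinition} are well-defined and finite.

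\medskip

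\textbf{Steps 2--5 (indexed transformation).} Fix $0 < \epsilon < \delta^{\Pi}(\pi_{\natmbU}, \bd(\Pi_{s}))$, which is positive by Lemma \ref{lem:greaterThanZero}, and take any $\natmbV$ of the required form with $\natD(\natmbU, \natmbV) < \epsilon$. Apply Theorem \ref{thm:ROLSIOtransformMultiple} to obtain the well-defined LSIO problems $\pi_{\natmbU;\natmbV} = (c, \sigma_{\natmbU;\natmbV})$ and $\pi_{\natmbV;\natmbU} = (c, \sigma_{\natmbV;\natmbU})$, which are equivalent to $\RO(\natmbU)$ and $\RO(\natmbV)$ respectively. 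Because every coefficient-RHS pair appearing in $\sigma_{\natmbU;\natmbV}$ is either in some $U_{\alpha}$ or the trivial $(0_{n},-\rho)$ (the $\argmin$ values lie in the same $U_{\alpha}$), we have $\pi_{\natmbU;\natmbV} \sim_{\Pi} \pi_{\natmbU}$, so $\pi_{\natmbU;\natmbV}$ inherits strong Slater with constant $\rho$ and membership in $\interior(\Pi_{s})$. The key isometric identity $\delta^{\Pi}(\pi_{\natmbU;\natmbV}, \pi_{\natmbV;\natmbU}) = \natD(\natmbU,\natmbV)$ is exactly what Theorem \ref{thm:ROLSIOtransformMultiple} delivers.

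\medskip

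\textbf{Steps 6--7 (applying LSIO Lipschitz and invariance).} To apply Theorem \ref{thm:LipschitzContinuityLSIO} with $\pi_{0} = \pi_{1} = \pi_{\natmbU;\natmbV}$ and $\pi_{2} = \pi_{\natmbV;\natmbU}$, I need $\epsilon < \delta^{\Pi}(\pi_{\natmbU;\natmbV}, \bd(\Pi_{s}))$, which follows from $\pi_{\natmbU;\natmbV} \sim_{\Pi} \pi_{\natmbU}$ and Lemma \ref{lem:LipschitzInv}(1). The theorem yields
\[
|\nu(\pi_{\natmbU;\natmbV}) - \nu(\pi_{\natmbV;\natmbU})| \leq L(\pi_{\natmbU;\natmbV}, \epsilon)\, \delta^{\Pi}(\pi_{\natmbU;\natmbV}, \pi_{\natmbV;\natmbU}).
\]
Lemma \ref{lem:LipschitzInv}(2) then gives $L(\pi_{\natmbU;\natmbV},\epsilon) = L(\pi_{\natmbU},\epsilon) =: L(\natmbU,\epsilon)$, which crucially does not depend on $\natmbV$. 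Combining these with $\nu(\pi_{\natmbU;\natmbV}) = \nu(\RO(\natmbU))$, $\nu(\pi_{\natmbV;\natmbU}) = \nu(\RO(\natmbV))$, and the isometry finishes the argument.

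\medskip

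\textbf{Main obstacle.} The conceptually routine part is the isometric embedding, since Theorem \ref{thm:ROLSIOtransformMultiple} already handles it. The subtle point I expect to be most delicate is verifying $\pi_{\natmbU} \in \interior(\Pi_{s})$ in the multiple-constraint setting: boundedness of $F^{\opt}$ is needed to place $c$ in the interior of the conical hull of the coefficient vectors, and Assumption 3 is what guarantees the remaining quantities entering the Lipschitz constant stay finite. This is also the place where Lemma \ref{lem:LipschitzInv} does heavy lifting, since without invariance of $L$ under $\Pi$-equivalence the explicit constant would depend on the auxiliary problem $\pi_{\natmbU;\natmbV}$ and thereby on $\natmbV$, defeating the purpose of a uniform Lipschitz bound.
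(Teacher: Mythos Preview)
Your proposal is correct and follows essentially the same seven-step scheme as the paper's proof (given in full in the appendix): set $T = I \times \mathbb{R}^{n} \times \mathbb{R}$, replace Theorem~\ref{thm:ROLSIOtransform} by Theorem~\ref{thm:ROLSIOtransformMultiple}, and otherwise mirror Theorem~\ref{thm:LipschitzSingleRO} step for step, invoking Lemma~\ref{lem:LipschitzInv} to make the Lipschitz constant independent of $\natmbV$. Your explicit remark that Assumption~3 is what keeps $\sup R(\pi_{\natmbU})$ finite (and hence the constants in Definition~\ref{def:constantDefinition} well-defined) is a useful clarification that the paper leaves implicit.
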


\begin{remark}
The theorem can be proved without the assumption that $\sup\{ -b^{\alpha}, \alpha\in I \}<+\infty$, but the resulting proof is much more complicated and does not yield additional insight.
\end{remark}

\begin{proof}\pf\

The proof is nearly identical to that of Theorem \ref{thm:LipschitzSingleRO}. The only differences are that $T$ is defined as $I\times \mathbb{R}^{n}\times \mathbb{R}$ and Theorem \ref{thm:ROLSIOtransformMultiple} is used in place of Theorem \ref{thm:ROLSIOtransform} where appropriate.
%
%
The full proof is given in Appendix \ref{app:Proof_Theorem_Lipschitz_Multiple}. \hfill $\square$

\end{proof}

\begin{remark}\label{rem:Remark_Transformation}
Note that Theorem \ref{thm:Lipschitz_multiple_coeff_cost_RHS_unc} can still be applied even if we are given an arbitrary uncertainty set $\mb{U}$ that is not of the form specified in~\eqref{eq:multiple_FIRST_ROBUST}. We can simply generate an uncertainty set $\natmbU$ from $\mb{U}$ as follows:
\begin{equation*}
U_{\alpha}:=\{(a^{\alpha},b^{\alpha}) \in \mathbb{R}^{n+1} \mbox{ : } \exists \mb{A}\in \mb{U} \mbox{ such that $(a^{\alpha},b^{\alpha})$ is the $\alpha$-th row of $\mb{A}$}\},
\end{equation*}
and $\natmbU:=\bigotimes_{\alpha\in I}U_{\alpha}$. We can define $\RO(\mbU)$ as follows:
\begin{equation}\label{eq:multipleRobustForm}
\begin{alignedat}{3}
\underset{x}{\inf} \mbox{ } & \quad \langle c,x\rangle,\\
\mathrm{subject \hspace{1mm} to}            & \quad \langle a^{\alpha},x\rangle \geq b^{\alpha}, &&\quad   (a^{\alpha},b^{\alpha})_{\alpha\in I}\in \mb{U}.
\end{alignedat}
\end{equation}
It is not hard to show that $\RO(\mbU)$ and $\RO(\natmbU)$ are equivalent. Thus, by applying Theorem \ref{thm:Lipschitz_multiple_coeff_cost_RHS_unc}, we can bound the optimal values of the RO problems that have uncertainty sets $\mbU$ and $\mbV$ (that have compact and convex $U_{\alpha}$ and $V_{\alpha}$) with the $\mb{d}_{\natural}$-distance between their respectively generated $\natmbU$ and $\natmbV$.

\end{remark}


\section{Closedness and upper semi-continuity of the optimal solution set} \label{sec:Convergence_results_optimal_set}

First, we provide definitions of closedness, lower semi-continuity and upper semi-continuity, as defined in \citet{lopez2012stability}. Then, we state and prove the main result of this section.

\begin{definition}[Definitions from \citet{lopez2012stability}]\label{def:definitions_Lopez} We define the following properties of set valued mappings. Suppose $\mathcal{K}$ is a set valued mapping from $\Pi$ to the power set of $\mathbb{R}^{n}$. Suppose $\mathcal{K}(\ov{\pi})\neq \varnothing$ for some given $\ov{\pi}$. Then:
\begin{enumerate}

\item$\mathcal{K}$ is \emph{closed} at $\ov{\pi}$ if for any sequence $\seq{\pi_{j}}{j=1}{\infty}\subseteq \Pi$ and $\seq{x_{j}}{j=1}{\infty}\subseteq \mathbb{R}^{n}$ such that $x_{j}\in \mathcal{K}(\pi_{j})$ for all $j$, with $\pi_{j}\to \ov{\pi}$ and $x_{j}\to \ov{x}$ for some $\ov{x}\in\mathbb{R}^{n}$, we have $\ov{x}\in \mathcal{K}(\ov{\pi})$.

\item $\mathcal{K}$ is \emph{lower semi-continuous} (lsc) at $\ov{\pi}$ if for every open set $\mathcal{O}\subseteq \mathbb{R}^{n}$ such that $\mathcal{K}(\ov{\pi})\cap \mathcal{O} \neq \varnothing$, there exists a neighborhood $\mathcal{N}$ of $\ov{\pi}$ such that for all $\pi\in \mathcal{N}$ we have $\mathcal{K}(\pi)\cap \mathcal{O}\neq\varnothing$.

\item $\mathcal{K}$ is \emph{upper semi-continuous} (usc) at $\ov{\pi}$ if for each open set $\mathcal{O}\subseteq \mathbb{R}^{n}$ such that $\mathcal{K}(\ov{\pi})\subseteq \mathcal{O}$, there exists a neighborhood $\mathcal{N}$ of $\ov{\pi}$ such that for all $\pi\in\mathcal{N}$, we have $\mathcal{K}(\pi)\subseteq \mathcal{O}$.

\end{enumerate}
\end{definition}

\begin{remark}
The definitions in Definition \ref{def:definitions_Lopez} can be extended to any set-valued mapping between a metric space and the set of subsets of a metric space. In particular, we can put the metric space of all uncertainty sets of the form $\natmbU$ with metric $\mb{d}_{\natural}$ in the place of the space of LSIO problems $\Pi$.
\end{remark}

\begin{theorem}[Closedness and upper semi-continuity of $F^{\opt}$]\label{thm:closedness_USC_optimal_set}
Let $\RO(\natmbU)$ be a RO problem of the form \eqref{eq:multipleRobustFormINITIAL} with non-empty compact and convex $U_{\alpha}$, indexed by $\alpha\in I$, and with fixed cost function $c$. Suppose that:
\begin{enumerate}
\item $\RO(\natmbU)$ satisfies the strong Slater condition, with strong Slater constant $\rho>0$,
\item $F^{\opt}(\RO(\natmbU))$ is non-empty and bounded,
\item $\sup\{ -b^{\alpha}, \alpha\in I \}<+\infty$.
\end{enumerate}
Then, with respect to the metric $\natD(\natmbU,\natmbV)$,
\begin{enumerate}
\item $F^{\opt}(\RO(\cdot))$ is closed at $\natmbU$,
\item $F^{\opt}(\RO(\cdot))$ is upper semi-continuous at $\natmbU$.
\end{enumerate}
\end{theorem}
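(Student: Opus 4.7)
The plan is to establish closedness directly using the Lipschitz continuity of the optimal value (Theorem 5.2) together with the fact that constraint-wise Hausdorff convergence of uncertainty sets preserves pointwise feasibility in the limit, and then to deduce upper semi-continuity from closedness via a contradiction argument that leverages the boundedness of $F^{\opt}(\RO(\natmbU))$ through a recession-cone analysis.

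For closedness, I would let $\{\natmbU_j\}$ satisfy $\natD(\natmbU_j,\natmbU)\to 0$ with $x_j\in F^{\opt}(\RO(\natmbU_j))$ and $x_j\to\ov{x}$. To get $\ov{x}\in F(\RO(\natmbU))$, I fix $\alpha\in I$ and $(a,b)\in U_{\alpha}$; the bound $d_H(U_{\alpha}^j,U_{\alpha})\leq\natD(\natmbU_j,\natmbU)\to 0$ together with compactness of $U_{\alpha}^j$ lets me pick $(a_j,b_j)\in U_{\alpha}^j$ with $(a_j,b_j)\to(a,b)$, and passing to the limit in $\langle a_j,x_j\rangle\geq b_j$ yields $\langle a,\ov{x}\rangle\geq b$. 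For optimality, once $\natD(\natmbU_j,\natmbU)<\delta^{\Pi}(\pi_{\natmbU},\bd(\Pi_s))$ for large $j$, Theorem 5.2 gives $\nu(\RO(\natmbU_j))\to\nu(\RO(\natmbU))$, so $\langle c,\ov{x}\rangle=\lim_j\nu(\RO(\natmbU_j))=\nu(\RO(\natmbU))$, yielding $\ov{x}\in F^{\opt}(\RO(\natmbU))$.

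For upper semi-continuity, I argue by contradiction: suppose there exist an open $\mathcal{O}\supseteq F^{\opt}(\RO(\natmbU))$ and a sequence $\natmbU_j\to\natmbU$ in $\natD$ with $x_j\in F^{\opt}(\RO(\natmbU_j))\setminus\mathcal{O}$. The key step is to show $\{x_j\}$ is bounded. If not, I extract a subsequence with $\|x_j\|\to\infty$ and a further subsequence with $d_j:=x_j/\|x_j\|\to d$, a unit vector. Dividing $\langle a_j,x_j\rangle\geq b_j$ by $\|x_j\|$ and sending $j\to\infty$ (using the same $(a_j,b_j)\to(a,b)$ construction) yields $\langle a,d\rangle\geq 0$ for every $(a,b)\in U_{\alpha}$ and every $\alpha\in I$, so $d$ is a recession direction of $F(\RO(\natmbU))$. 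Since $\nu(\RO(\natmbU_j))\to\nu(\RO(\natmbU))$ is bounded by Theorem 5.2 and $\langle c,x_j\rangle=\nu(\RO(\natmbU_j))$, dividing by $\|x_j\|$ gives $\langle c,d\rangle=0$. Hence, for any $x^*\in F^{\opt}(\RO(\natmbU))$ the ray $\{x^*+td:t\geq 0\}$ lies in $F^{\opt}(\RO(\natmbU))$, contradicting boundedness. With $\{x_j\}$ bounded, a convergent subsequence $x_j\to\ov{x}$ exists, closedness (just shown) forces $\ov{x}\in F^{\opt}(\RO(\natmbU))\subseteq\mathcal{O}$, while $x_j\in\mathcal{O}^c$ closed forces $\ov{x}\in\mathcal{O}^c$, a contradiction.

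The main obstacle I anticipate is the recession-cone boundedness step: the recession direction $d$ harvested from the feasibility limits must additionally satisfy $\langle c,d\rangle=0$, which is precisely what the quantitative bound $\nu(\RO(\natmbU_j))\to\nu(\RO(\natmbU))$ of Theorem 5.2 supplies. An alternative approach more in the spirit of the paper's LSIO machinery would first apply Theorems 2.2 and 2.3 to the LSIO $\pi_{\natmbU}$ of Theorem 5.2 (whose membership in $\interior(\Pi_s)$ follows as in Step 1 of the proof of Theorem 5.1), obtaining LSIO closedness and usc at $\pi_{\natmbU}$, and then transfer to RO via the paired transformations $\pi_{\natmbU_j;\natmbU}$ and $\pi_{\natmbU;\natmbU_j}\sim_{\Pi}\pi_{\natmbU}$ from Theorem 3.3; the subtlety in that route is that $\delta^{\Pi}$ is not invariant under $\Pi$-equivalence, so LSIO closedness at $\pi_{\natmbU}$ does not directly apply to $\pi_{\natmbU_j;\natmbU}$ (which need not converge to $\pi_{\natmbU}$ in $\delta^{\Pi}$), making the paired auxiliary construction essential.
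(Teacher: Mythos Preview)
Your proposal is correct and takes a genuinely different, more elementary route than the paper in both parts.

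For closedness, the paper transforms to LSIO via Theorem~3.3, defines the paired problems $\pi_{\natmbU;\natmbV_j}\sim_{\Pi}\pi_{\natmbU}$ and $\pi_{\natmbV_j;\natmbU}$, and then invokes Corollary~4.1 of \citet{canovas2006lipschitz} to bound $d(x_j,F(\sigma_{\natmbU}))$ by a constant times $\natD(\natmbU,\natmbV_j)$, forcing $\ov{x}\in F(\sigma_{\natmbU})$ via a subsequence argument. Your direct feasibility argument---approximating each $(a,b)\in U_{\alpha}$ by $(a_j,b_j)\in U_{\alpha}^j$ and passing to the limit in $\langle a_j,x_j\rangle\geq b_j$---bypasses the LSIO machinery entirely and is both shorter and self-contained. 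Both approaches share the final optimality step via Theorem~5.2.

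For upper semi-continuity the contrast is sharper. The paper does \emph{not} argue by contradiction from closedness; instead it introduces a new RO--LSIO transformation indexed by $T=I\times\mathbb{R}^{n+1}\times\mathbb{R}^{n+1}$, applies Theorems~2.2 and~2.3 to obtain LSIO upper semi-continuity at the fixed problem $\pi_{\natmbU}$, and then constructs for each $\natmbV$ a single LSIO $\pi_{\natmbV;\natmbU}$ satisfying $\delta^{\Pi}(\pi_{\natmbV;\natmbU},\pi_{\natmbU})\leq\natD(\natmbU,\natmbV)$. The remark following the proof explains that the original paired transformation of Theorem~3.3 fails here precisely because LSIO usc carries no uniform rate across $\Pi$-equivalent problems---exactly the subtlety you identify in your last paragraph. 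Your recession-cone argument avoids this altogether: it uses only closedness (just proved), the convergence $\nu(\RO(\natmbU_j))\to\nu(\RO(\natmbU))$ from Theorem~5.2, and boundedness of $F^{\opt}(\RO(\natmbU))$, and is again more elementary. What the paper's approach buys is a tighter connection to the existing LSIO stability literature; what yours buys is independence from that literature and a transparent reason why boundedness of the optimal set is the right hypothesis.
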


\begin{proof}
\pf\

\begin{enumerate}
\item $F^{\opt}(\RO(\cdot))$ is closed at $\natmbU$.

Let $T:=I\times \mathbb{R}^{n} \times \mathbb{R}$ with elements $(\alpha,t,s)$ where $\alpha\in I$, $t\in\mathbb{R}^{n}$ and $s\in\mathbb{R}$. Let a sequence $\{\natmbV_{j}\}_{j=1}^{\infty} \subseteq\bigotimes_{\alpha\in I}\mathbb{R}^{n+1}$ and a sequence $\{x_{j}\}_{j=1}^{\infty}\subseteq F^{\opt}(\RO(\natmbV_{j}))$ with $\natmbV_{j} \overset{\natD}{\to} \natmbU$ and with $x_{j}\to \overline{x}$ for some $\overline{x}\in\mathbb{R}^{n}$ be given. By definition, $F^{\opt}(\RO(\cdot))$ is closed at $\natmbU$ if $\overline{x}\in F^{\opt}(\RO(\natmbU))$. The idea of the proof is to show that $x_{j}$ becomes arbitrarily close to $F(\RO(\natmbU))$ as $j\to\infty$. Then, since $\nu(\RO(\natmbV_{j}))\to\nu(\RO(\natmbU))$ as $j\to\infty$ by Theorem \ref{thm:Lipschitz_multiple_coeff_cost_RHS_unc}, it must be the case that $\overline{x}\in F^{\opt}(\RO(\natmbU))$.

\begin{enumerate}[label=\arabic*.]
\item \emph{Define a LSIO $\pi_{\natmbU}\in\interior(\Pi_{s})$ that is equivalent to $\RO(\natmbU)$.}
Let $\pi_{\natmbU}:=(c,\sigma_{\natmbU})$ be defined as in Theorem~\ref{thm:Lipschitz_multiple_coeff_cost_RHS_unc}.
%
%
Then from Theorem \ref{thm:Lipschitz_multiple_coeff_cost_RHS_unc}, $\pi_{\natmbU}\in\interior(\Pi_{s})$, is equivalent to $\RO(\natmbU)$, and satisfies the strong Slater condition . 

\item \emph{Define a LSIO $\pi_{\natmbU;\natmbV_{j}}$ that is $\Pi$-equivalent to $\pi_{\natmbU}$.}

For each $\natmbV_{j}$, label the contraint uncertainty set associated with $\alpha \in I$ as $V_{j,\alpha}$, and define $\pi_{\natmbU;\natmbV_{j}}:=(c,\sigma_{\natmbU;\natmbV_{j}})$, where:
  \begin{equation*}
\sigma_{\natmbU,\natmbV_{j}}((\alpha,t,s)) := \begin{cases}
  (t,s) & \mbox{if } (t,s)\in U_{\alpha}\\
  \argmin_{(u_{a},u_{b})\in U_{\alpha}}d((u_{a},u_{b}),(t,s)) & \mbox{if } (t,s)\in V_{j,\alpha}\backslash U_{\alpha}\\
  (0_{n},-\rho) &\mbox{if } (t,s)\notin U_{\alpha}\cup V_{j,\alpha}.
  \end{cases}
  \end{equation*}
The proof that $\pi_{\natmbU;\natmbV_{j}}$ is well-defined, is $\Pi$-equivalent to $\pi_{\natmbU}$ and that $\pi_{\natmbU;\natmbV_{j}}\in\interior(\Pi_{s})$ is given in the proof of Theorem \ref{thm:Lipschitz_multiple_coeff_cost_RHS_unc}. Furthermore, since $\pi_{\natmbU;\natmbV}\sim_{\Pi}\pi_{\natmbU}$ and $\pi_{\natmbU}$ satisfies the strong Slater condition with Slater constant $\rho$, it follows that $\pi_{\natmbU;\natmbV}$ also satisfies the strong Slater condition with constant $\rho>0$.

\item \emph{Define a LSIO $\pi_{\natmbV_{j};\natmbU}$ that is equivalent to $\RO(\natmbV)$.}

Analogously, define $\pi_{\natmbV_{j};\natmbU}:=(c,\sigma_{\natmbV_{j};\natmbU})$, where:
\begin{equation*}
\sigma_{\natmbV_{j};\natmbU}((\alpha,t,s)) := \begin{cases}
(t,s) & \mbox{if } (t,s)\in V_{j,\alpha}\\
\argmin_{(v_{a},v_{b})\in V_{j,\alpha}} d((v_{a},v_{b}),(t,s)) & \mbox{if } (t,s)\in U_{\alpha}\backslash V_{j,\alpha}\\
(0_{n},-\rho) &\mbox{if } (t,s)\notin U_{\alpha}\cup V_{j,\alpha}.
\end{cases}
\end{equation*}
The proof that this is well-defined and is equivalent to $\RO(\natmbV)$ is given in the proof of Theorem \ref{thm:Lipschitz_multiple_coeff_cost_RHS_unc}.

\item \emph{Show that $d(x_{j},F(\sigma_{\natmbU;\natmbV_{j}})) \leq \frac{\psi(\|x_{j}\|)}{\delta^{\Sigma}(\sigma_{\natmbU;\natmbV_{j}},\Sigma_{i})} \natD(\natmbU,\natmbV_{j})$ for all $j$.}

We first show that
\begin{equation}\label{eq:continuity_Inequality_Step_One}
d(x_{j},F(\sigma_{\natmbU;\natmbV_{j}})) \leq \frac{\psi(\|x_{j}\|)}{\delta^{\Sigma}(\sigma_{\natmbU;\natmbV_{j}},\Sigma_{i})} \delta^{\Sigma}(\sigma_{\natmbU;\natmbV_{j}},\sigma_{\natmbV_{j};\natmbU}).
\end{equation}
Corollary 4.1 from \citet{canovas2006lipschitz} states that for any $\sigma_{0}\in\Sigma_{f}$ and $z^{0}\in F(\sigma_{0})$,
\begin{equation*}
d(z^{0},F(\sigma)) \leq \frac{\psi(\|z^{0}\|)}{\delta^{\Sigma}(\sigma,\Sigma_{i})} \delta^{\Sigma}(\sigma,\sigma_{0}),
\end{equation*}
for all $\sigma\in\interior(\Sigma_{f})$.
Thus, inequality \eqref{eq:continuity_Inequality_Step_One} follows from the application of that corollary with $\sigma_{\natmbV_{j};\natmbU}\mapsto \sigma_{0}$ and $\sigma_{\natmbU;\natmbV_{j}}\mapsto \sigma$. What remains is to check that the assumptions of the corollary are satisfied, namely that $\sigma_{\natmbV_{j};\natmbU} \in \Sigma_{f}$ and $\sigma_{\natmbU;\natmbV_{j}} \in \interior(\Sigma_{f})$. By Step 3, $x_{j}\in F^{\opt}(\pi_{\natmbV_{j};\natmbU}) \subseteq F(\pi_{\natmbV_{j};\natmbU})$,
which implies that $\pi_{\natmbV_{j};\natmbU}$ has a non-empty feasible solution set. Thus, $\sigma_{\natmbV_{j};\natmbU} \in \Sigma_{f}$. Since $\pi_{\natmbU;\natmbV_{j}}$ satisfies the strong Slater condition, by Theorem \ref{thm:lsc_SS_interior}, $\sigma_{\natmbU;\natmbV_{j}} \in \interior(\Sigma_{f})$. 

Then, we note that
\begin{equation}\label{eq:continuity_Inequality_Step_Two}
 \delta^{\Sigma}(\sigma_{\natmbU;\natmbV_{j}},\sigma_{\natmbV_{j};\natmbU}) = \natD(\natmbU,\natmbV_{j})
\end{equation}
by Theorem \ref{thm:ROLSIOtransformMultiple}, since $\pi_{\natmbU;\natmbV_{j}}$ and $\pi_{\natmbV_{j};\natmbU}$ have the same cost function.

Combining \eqref{eq:continuity_Inequality_Step_One} and \eqref{eq:continuity_Inequality_Step_Two} yields the desired inequality:
\begin{equation}\label{eq:continuity_Inequality}
d(x_{j},F(\sigma_{\natmbU;\natmbV_{j}})) \leq \frac{\psi(\|x_{j}\|)}{\delta^{\Sigma}(\sigma_{\natmbU;\natmbV_{j}},\Sigma_{i})} \natD(\natmbU,\natmbV_{j}).
\end{equation}

\item \emph{Show that $\overline{x}\in F(\sigma_{\natmbU})$.}

Since $\pi_{\natmbU;\natmbV_{j}}$ is equivalent to $\RO(\natmbU)$, it follows that $F^{\opt}(\pi_{\natmbU;\natmbV_{j}})$ is non-empty and bounded, which implies that $\pi_{\natmbU;\natmbV_{j}}\in \interior(\Pi_{s})$ (cf. proof of Theorem~\ref{thm:Lipschitz_multiple_coeff_cost_RHS_unc}). Furthermore, since $\pi_{\natmbU;\natmbV_{j}}\sim_{\Pi}\pi_{\natmbU}$ for all $j$, then $F(\sigma_{\natmbU;\natmbV_{j}})=F(\sigma_{\natmbU})$ for all $j$ and Lemma \ref{lem:LipschitzInv} further implies that $\delta^{\Sigma}(\sigma_{\natmbU;\natmbV_{j}},\Sigma_{i})=\delta^{\Sigma}(\sigma_{\natmbU},\Sigma_{i})$. Thus, for any $j$, \eqref{eq:continuity_Inequality} can be rewritten:
\begin{equation}\label{eq:continuity_Inequality_Final}
d(x_{j},F(\sigma_{\natmbU})) \leq \frac{\psi(\|x_{j}\|)}{\delta^{\Sigma}(\sigma_{\natmbU},\Sigma_{i})} \natD(\natmbU,\natmbV_{j}).
\end{equation}
We use inequality \eqref{eq:continuity_Inequality_Final} to show that $d(x_{j},F(\sigma_{\natmbU})) \to 0$ as $j\to \infty$. This result follows because the sequence $\psi(\|x_{j}\|)$ is bounded ($\psi(\| \cdot \|)$ is continuous and $x_{j}\to \overline{x}$ implies that $\psi(\| x_{j} \|)\to \psi(\| \overline{x}\|)$) and $\natD(\natmbU,\natmbV_{j})\to 0$ as $j\to\infty$ by assumption.


Next consider a subsequence of $\left\{x_{j} \right\}_{j=1}^{\infty}$ labeled $\left\{x_{j_{k}} \right\}_{k=1}^{\infty}$ and indexed by $j_{k}$. 
For each $k$, since $d(x_{j},F(\sigma_{\natmbU})) \to 0$ as $j\to \infty$, we can choose $j_{k}$ large enough so that $d(x_{j_{k}},F(\sigma_{\natmbU}))\leq 2^{-(k+1)}$. Now, we consider a different sequence $\left\{ x'_{k} \right\}_{k=1}^{\infty} \subseteq F(\sigma_{\natmbU})$ such that $d(x_{j_{k}},x'_{k})\to 0$ as $k\to\infty$, which implies $x'_{k}\to \overline{x}$. The existence of the sequence $\{x'_{k}\}_{k=1}^{\infty}$ follows from the fact that
\begin{equation*}
d(x_{j_{k}},F(\sigma_{\natmbU}))\leq 2^{-(k+1)} \implies \inf_{x'\in F(\sigma_{\natmbU})}d(x_{j_{k}},x')\leq 2^{-(k+1)}
\end{equation*}
and for each $k$, we can define a point $x_{k}'$ in $F(\sigma_{\natmbU})$ satisfying $d(x_{j_{k}},x_{k}')<2^{-k}$.

Finally, we must show that $x'_{k}\to \ov{x}$ as $k\to\infty$. By the triangle inequality:
%
\begin{equation*}
d(x'_{k},\overline{x})\leq d(x'_{k},x_{j_{k}})+d(x_{j_{k}},\overline{x})\to 0
\end{equation*}
as $k\to\infty$. Lastly, since $F(\sigma_{\natmbU})$ is closed (it is the intersection of closed sets), and $x'_{k}\to \overline{x}$, we have that $\overline{x}\in F(\sigma_{\natmbU})$.

\item \emph{Show that $\overline{x}\in F^{\opt}(\pi_{\natmbU})=F^{\opt}(\RO(\natmbU))$.}

Now we will show that $\overline{x}$ is indeed an optimal solution of $\pi_{\natmbU;\natmbV_{j}}$ and $\RO(\natmbU)$. Theorem \ref{thm:Lipschitz_multiple_coeff_cost_RHS_unc} holds for $\RO(\natmbU)$, so that $\nu(\RO(\natmbV_{j}))\to\nu(\RO(\natmbU))$ as $k\to\infty$. Also, $x_{j}\to\overline{x}$, so that $\langle c,x_{j}\rangle \to \langle c,\overline{x}\rangle$. Thus, since  $x_{j}\in F^{\opt}(\RO(\natmbV_{j}))$, we have that $\langle c,x_{j}\rangle=\nu(\RO(\natmbV_{j}))\to \nu(\RO(\natmbU))$, which implies that $\langle c, \overline{x}\rangle = \nu(\RO(\natmbU))$, in other words, $\overline{x}\in F^{\opt}(\RO(\natmbU))$.\\

\end{enumerate}

\item $F^{\opt}(\RO(\cdot))$ is upper semi-continuous at $\natmbU$.

Let $T:=I\times \mathbb{R}^{n+1} \times \mathbb{R}^{n+1}$ with elements $(\alpha,\tau,\xi)$ where $\alpha\in I$, $\tau\in\mathbb{R}^{n+1}$ and $\xi\in\mathbb{R}^{n+1}$. Let $\mathcal{O}$ be an open set satisfying $F^{\opt}(\RO(\natmbU))\subseteq\mathcal{O}$. By definition, $F^{\opt}(\RO(\cdot))$ is upper semi-continuous at $\natmbU$ if there exists some $\epsilon>0$ such that $F^{\opt}(\natmbV)\subseteq \mathcal{O}$ for all $\natmbV$ satisfying $\natD(\natmbU,\natmbV)<\epsilon$, where $\natmbV$ is a product of compact and convex $V_{\alpha}$. The idea of the proof is to use the fact that $F^{\opt}(\RO(\cdot))$ is upper semi-continuous for LSIO problems, then use another RO-LSIO transformation to gain upper semi-continuity for RO problems. 


\begin{enumerate}[label=\arabic*.]

\item \emph{Define the LSIO problem $\pi_{\natmbU}:=(c,\sigma_{\natmbU})$.} Define $\pi_{\natmbU}:=(c,\sigma_{\natmbU})$, where:
\begin{equation*}
\sigma_{\natmbU}(\alpha,\tau,\xi) := \begin{cases}
\tau & \mbox{if } \tau\in U_{\alpha}, \xi\in \mathbb{R}^{n+1},\\
(0_{n},-\rho) & \mbox{if } \tau\notin U_{\alpha}, \xi\in \mathbb{R}^{n+1}.
\end{cases}
\end{equation*}
The proof that $\pi_{\natmbU}$ is well-defined and equivalent to $\RO(\natmbU)$ essentially follows the same proof as in Theorem \ref{thm:Lipschitz_multiple_coeff_cost_RHS_unc}.

\item \emph{$F^{\opt}$ is upper semi-continuous at $\pi_{\natmbU}$.} Since $\pi_{\natmbU}$ is equivalent to $\RO(\natmbU)$, it follows that $\pi_{\natmbU}$ satisfies the strong Slater condition and is in $\Pi_{s}$. By Theorem \ref{thm:lsc_SS_interior}, $\pi_{\natmbU}$ satisfying the strong Slater condition means that $F(\cdot)$ is lower semi-continuous at $\sigma_{\natmbU}$. By Theorem \ref{thm:optSetConv}, lower semi-continuity implies that $F^{\opt}(\cdot)$ is closed at $\pi_{\natmbU}$. Also note that $F^{\opt}(\pi_{\natmbU})$ is non-empty and bounded because $\pi_{\natmbU}$ shares the same optimal solution set as $\RO(\natmbU)$. Thus, by Theorem \ref{thm:optSetConv}, $F^{\opt}(\cdot)$ is upper semi-continuous at $\pi_{\natmbU}$.

\item \emph{With the given $\mathcal{O}$, choose $\epsilon>0$ such that  if $F^{\opt}(\pi_{\natmbU})\subseteq \mathcal{O}$, then for all $\pi$ satisfying $\delta^{\Pi}(\pi,\pi_{\natmbU})\leq \epsilon$, we have $F^{\opt}(\pi)\subseteq \mathcal{O}$.} Such an $\epsilon$ exists by definition of upper semi-continuity at $\pi_{\natmbU}$. 

\item \emph{Choose $\natmbV$ satisfying $\natD(\natmbU,\natmbV)<\epsilon$ and $\natmbV$ has compact and convex $V_{\alpha}$.}

\item \emph{Define $\pi_{\natmbV;\natmbU}$ that is equivalent to $\RO(\mbV)$.} Define $\pi_{\natmbV;\natmbU}:=(c,\sigma_{\natmbV;\natmbU})$, where:
\begin{equation*}
\sigma_{\natmbV;\natmbU}(\alpha,\tau,\xi) := \begin{cases}
\xi & \mbox{if } \tau = \argmin_{u \in U_{\alpha}} d(\xi,u), \xi\in V_{\alpha},\\
(0_{n},-\rho) & \mbox{if } \tau\notin U_{\alpha}, \xi\in\mathbb{R}^{n+1},\\
\argmin_{v\in V_{\alpha}}d(v,\tau) & \mbox{otherwise}. \\
\end{cases}
\end{equation*}
Note that this is a well-defined mapping, since both the $\argmin$ in the first and third case exist and are unique due to $U_{\alpha}$ and $V_{\alpha}$ being compact and convex. Note that every constraint in $\RO(\natmbV)$ is included in $\sigma_{\natmbV;\natmbU}$ (first case), and that all other constraints are either trivial (second case) or redundant (third case). Thus, $\pi_{\natmbV;\natmbU}$ is equivalent to $\RO(\natmbV)$.

\item \emph{$\sup_{\alpha,\tau,\xi} \|\sigma_{\natmbU}(\alpha,\tau,\xi)-\sigma_{\natmbV;\natmbU}(\alpha,\tau,\xi) \| \leq \sup_{\alpha\in I} d_{H}(U_{\alpha},V_{\alpha}) =: \natD(\natmbU,\natmbV)$.}

Rewriting $\sigma_{\natmbU}(\alpha,\tau,\xi)$:
\begin{equation*}
\sigma_{\natmbU}(\alpha,\tau,\xi) := \begin{cases}
\tau & \mbox{if } \tau = \argmin_{u \in U_{\alpha}} d(\xi,u), \xi\in V_{\alpha},\\
(0_{n},-\rho) & \mbox{if } \tau\notin U_{\alpha}, \xi\in \mathbb{R}^{n+1},\\
\tau & \mbox{otherwise}.
\end{cases}
\end{equation*}
Notice the ``otherwise'' condition occurs \emph{only if} $\tau\in U_{\alpha}$.

Then:
\begin{equation*}
\|\sigma_{\natmbU}(\alpha,\tau,\xi)-\sigma_{\natmbV;\natmbU}(\alpha,\tau,\xi) \| := \begin{cases}
\min_{u \in U_{\alpha}}d(\xi,u) & \mbox{if } \tau = \argmin_{u \in U_{\alpha}} d(\xi,u), \xi\in V_{\alpha}, \\
0 & \mbox{if } \tau\notin U_{\alpha}, \xi\in\mathbb{R}^{n+1},\\
\min_{v\in V_{\alpha}} d(v,\tau) & \mbox{otherwise}. \\
\end{cases}
\end{equation*}
Finally:
\begin{align*}
\delta^{\Pi}(\pi_{\natmbV;\natmbU},\pi_{\natmbU}) & := \sup_{\alpha,\tau,\xi} \|\sigma_{\natmbU}(\alpha,\tau,\xi)-\sigma_{\natmbV;\natmbU}(\alpha,\tau,\xi) \| \\
& \leq
\sup_{\alpha\in I} \left\{ \adjustlimits\sup_{\xi\in V_{a}, \tau\in \mathbb{R}^{n+1}} \min_{u \in U_{\alpha}}d(\xi,u) , \adjustlimits\sup_{\tau\in U_{\alpha},\xi\in \mathbb{R}^{n+1}}\min_{v\in V_{\alpha}} d(v,\tau) \right\} \\
& = \sup_{\alpha\in I} d_{H}(U_{\alpha},V_{\alpha}) \\
& =: \natD(\natmbU,\natmbV).
\end{align*}
%


Since $\delta^{\Pi}(\pi_{\natmbV;\natmbU},\pi_{\natmbU})\leq \epsilon$, we have $F^{\opt}(\pi_{\natmbV;\natmbU})\subseteq \mathcal{O}$ by step 3. Since $F^{\opt}(\pi_{\natmbV;\natmbU})=F^{\opt}(\RO(\natmbV))$, the proof is complete. \hfill $\square$\\

\end{enumerate}


\end{enumerate}

\end{proof}

\begin{remark}
Note that we used a different RO-LSIO transformation in the proof of the second part of Theorem~\ref{thm:closedness_USC_optimal_set} than the one given in Theorems~\ref{thm:ROLSIOtransform} and~\ref{thm:ROLSIOtransformMultiple}. This new RO-LSIO transformation would have also worked for the proofs of Theorem \ref{thm:LipschitzSingleRO} and \ref{thm:Lipschitz_multiple_coeff_cost_RHS_unc}, however, we would have had to use the definition $T:=I\times\mathbb{R}^{n+1}\times\mathbb{R}^{n+1}$, and lost some of the geometric intuition of those proofs.

On the other hand, we cannot use the RO-LSIO transformation from Theorems \ref{thm:ROLSIOtransform} and \ref{thm:ROLSIOtransformMultiple} in the proof of the second part of Theorem \ref{thm:closedness_USC_optimal_set}. The reason is because the definition of upper semi-continuity for LSIO problems is fairly weak in that it does not provide estimates for convergence rates. In the proofs of Theorems \ref{thm:LipschitzSingleRO} and \ref{thm:Lipschitz_multiple_coeff_cost_RHS_unc}, we exploited the fact that Theorem \ref{thm:LipschitzContinuityLSIO} had Lipschitz estimates that were invariant for $\Pi$-equivalent LSIO problems. That is, given some $\eta$ with $\pi_{\natmbU}\sim_{\Pi}\pi'_{\natmbU}$, we can choose $\epsilon$ sufficiently small so that $|\nu(\pi)-\nu(\pi_{\natmbU}) |\leq L(\pi_{\natmbU}) \epsilon < \eta$ for all $\delta^{\Pi}(\pi,\pi_{\natmbU})\leq \epsilon$, then $|\nu(\pi)-\nu(\pi'_{\natmbU}) |\leq L(\pi'_{\natmbU}) \epsilon <\eta $ for all $\delta^{\Pi}(\pi,\pi'_{\natmbU})\leq \epsilon$. In other words, the optimal value function $\nu(\cdot)$ is, in some sense, ``uniformly Lipschitz continuous'' on $\Pi$-equivalent LSIO problems.

In contrast, LSIO upper semi-continuity at $\pi_{\natmbU}$ states that, for any given open set $\mathcal{O}$, there exists a neighborhood $\delta^{\Pi}(\pi,\pi_{\natmbU})\leq \epsilon$ for which each point $\pi$ in that neighborhood satisfies $F^{\opt}(\pi)\subseteq \mathcal{O}$ if $F^{\opt}(\pi_{\natmbU})\subseteq \mathcal{O}$. Thus, given $\mathcal{O}$ and that $\pi_{\natmbU}\sim_{\Pi}\pi'_{\natmbU}$ and $F^{\opt}(\pi_{\natmbU})\subseteq \mathcal{O}$, even if $\epsilon$ guarantees $F^{\opt}(\pi)\subseteq \mathcal{O}$ for all $\pi$ satisfying $\delta^{\Pi}(\pi,\pi_{\natmbU})\leq\epsilon$, it might not be the case that  $F^{\opt}(\pi)\subseteq \mathcal{O}$ for all $\pi$ satisfying $\delta^{\Pi}(\pi,\pi'_{\natmbU})\leq\epsilon$. In other words, the optimal solution set $F^{\opt}(\cdot)$ is, in some sense, not ``uniformly continuous'', even on $\Pi$-equivalent LSIO problems.

Thus, to overcome the lack of an explicit convergence rate in in the definition of upper semi-continuity, the new RO-LSIO transformation transfers the machinery of Theorems \ref{thm:ROLSIOtransform} and \ref{thm:ROLSIOtransformMultiple} into the second $\mathbb{R}^{n+1}$ in the index set $T$.

\end{remark}




\section{Lipschitz continuity of the $\epsilon$-optimal solution set} \label{sec:approx_optimal_set}

\subsection{Set and function distance notation}

The notation in this subsection is taken from \citet{attouch1993quantitative} and \citet{rockafellar2009variational}.

For any given $r>0$, define the set $(A)_{r}:=(A\cap rB)$; recall that $B$ is the unit ball. For two sets $C$ and $D$, subsets of Euclidean space, define the ``excess of $C$ on $D$'' as $e(C,D):= \sup_{x\in C}\inf_{y\in D}d(x,y)$, where $d(x,y)$ is the Euclidean metric. Note that $d_{H}(C,D)=\max\{e(C,D),e(D,C) \}$. We define the following metrics, which are extensions to the Hausdorff distance, as in \citet{rockafellar2009variational}.
\begin{definition}[Set distance definitions]\label{def:set_distance_definitions}
Let $C$ and $D$ be two non-empty sets and $r > 0$. Define the following metrics $\mb{d}_{r}(C,D)$ and $\wh{\mb{d}}_{r}(C,D)$:
\begin{eqnarray}
\mb{d}_{r}(C,D) &:=& \max_{\|x\|\leq r}\left| d(x,C) - d(x,D) \right|,\\
\wh{\mb{d}}_{r}(C,D) &:=& \inf\left\{  \eta\geq 0 \left| \begin{aligned} C\cap r B &\subseteq D+\eta B \\ D\cap r B &\subseteq C+\eta B \end{aligned} \right. \right\}= \max\{ e((C)_{r},D), e((D)_{r},C) \}. \label{eq:first_definition_of_hat_D}
\end{eqnarray}
The two different definitions of the set distance $\wh{\mb{d}}_{r}$ given in \eqref{eq:first_definition_of_hat_D} are given in \citet{rockafellar2009variational} and \citet{attouch1993quantitative}, respectively, and can be shown to be equivalent.
\end{definition}


We also define the following metric related to the epigraph distance between functions, as defined in \citet{rockafellar2009variational}.
\begin{definition}[Function distance definitions]\label{def:function_distance_definitions}
Let $f_{1}$ and $f_{2}$ be two extended real-valued functions. Define the following measures of distance:
$\delta_{r}(f_{1},f_{2})$, $\wh{\delta}_{r}(f_{1},f_{2})$ and $\wh{\delta}^{+}_{r}(f_{1},f_{2})$:
\begin{eqnarray*}
\delta_{r}(f_{1},f_{2}) &:=& \mb{d}_{r}(\mathrm{epi} f_{1}, \mathrm{epi} f_{2}),\\
\wh{\delta}_{r}(f_{1},f_{2}) &:=& \wh{\mb{d}}_{r}(\mathrm{epi} f_{1}, \mathrm{epi} f_{2}),\\
\wh{\delta}^{+}_{r}(f_{1},f_{2}) &:=& \inf\left\{\eta \geq 0 \left| \begin{aligned}
\min_{B(x,\eta)}f_{2} &\leq \max\{f_{1}(x),-r \}+\eta \\ \min_{B(x,\eta)}f_{1} & \leq \max\{f_{2}(x),-r \}+\eta  \mbox{ }
\end{aligned}\right. \forall x\in r B
\right\},
\end{eqnarray*}
where $\mathrm{epi} f := \{ (x,\alpha)\in \mathbb{R}^{n}\times\mathbb{R} \mbox{ : } \alpha\geq f(x) \}$.
\end{definition}

\begin{definition}[$\epsilon$-minimizers of a function]
Define the set of $\epsilon$-minimizers of a function $f$ as:
\begin{equation*}
\epsilon\mhyphen\mbox{argmin} f := \{ x\in \mathbb{R}^{n} \mbox{ : } f(x)\leq \inf f +\epsilon \}.
\end{equation*}
\end{definition}

\subsection{Lipschitz continuity of the $\epsilon$-optimal solution set for LSIO problems}
Before we show Lipschitz continuity of the $\epsilon$-optimal solution set for RO problems, we first establish an analogous Lipschitz continuity result for LSIO problems, which is the focus of this subsection. 
Next, we state and prove Lemmas \ref{lem:distApproxOpt} and \ref{lem:distFeasReg}, which combine different results from \citet{rockafellar2009variational} and apply them to LSIO problems. Lemma \ref{lem:distApproxOpt} relates the distance between $\epsilon$-optimal solution sets with the distance between feasible solution sets and Lemma \ref{lem:distFeasReg} relates the distance between feasible solution sets and the $\delta^{\Pi}$-distance between the LSIO problems. Combining Lemmas \ref{lem:distApproxOpt} and \ref{lem:distFeasReg} gives us the required Lipschitz continuity of the $\epsilon$-optimal solution set for LSIO problems.

\begin{lemma}[Distance between $\epsilon$-optimal solution sets via feasible solution sets]\label{lem:distApproxOpt} Suppose $\pi_{1}:=(c,\sigma_{1})$ and $\pi_{2}:=(c,\sigma_{2})$ are LSIO problems such that there exists an $r_{0}>0$ such that $r_{0} B \cap F^{\opt}(\pi_{1}) \neq \varnothing$ and $r_{0} B \cap F^{\opt}(\pi_{2}) \neq \varnothing$ and $\nu(\pi_{1})>-r_{0}$ and $\nu(\pi_{2})>-r_{0}$.
%
Define the extended real-valued functions $f_{1}(x):=\langle c,x \rangle + \chi_{F(\sigma_{1})}(x)$ and $f_{2}(x):= \langle c,x \rangle+ \chi_{F(\sigma_{2})}(x)$, where $\chi$ is the characteristic function. Then, for all $r> r_{0}$ and for all $\epsilon>0$:
\begin{equation*}
\wh{\mb{d}}_{r}(\epsilon\mathrm{\mhyphen argmin} f_{1}, \epsilon\mathrm{\mhyphen argmin} f_{2}) \leq (1+4r\epsilon^{-1})(1+\|c\|)\wh{\mb{d}}_{r}(F(\sigma_{1}),F(\sigma_{2})).
\end{equation*}
\end{lemma}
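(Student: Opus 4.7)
The plan is to reduce the statement to Theorem \ref{thm:estimates_for_approximately_optimal_solutions} (Theorem 7.69 of Rockafellar--Wets) applied to the functions $f_1$ and $f_2$, and then bound the resulting epigraphical distance $\wh{\delta}^{+}_{r}(f_1,f_2)$ in terms of the truncated Hausdorff distance between the feasible sets.

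First I would verify that the hypotheses of Theorem~\ref{thm:estimates_for_approximately_optimal_solutions} are met. Since $F(\sigma_i)$ is an intersection of closed affine halfspaces, it is closed and convex, so $\chi_{F(\sigma_i)}$ is proper, lower semi-continuous, and convex; adding the linear term $\langle c,\cdot\rangle$ preserves all three properties, so each $f_i$ is proper, lsc, and convex. Moreover $\argmin f_i = F^{\opt}(\pi_i)$ and $\min f_i = \nu(\pi_i)$, so the assumptions $r_0 B\cap \argmin f_i\neq\varnothing$ and $\min f_i\geq -r_0$ are precisely the hypotheses of the lemma.

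The core step is the bound
\begin{equation*}
\wh{\delta}^{+}_{r}(f_1,f_2)\;\leq\;(1+\|c\|)\,\wh{\mb{d}}_{r}(F(\sigma_1),F(\sigma_2)).
\end{equation*}
Set $\eta_0:=\wh{\mb{d}}_{r}(F(\sigma_1),F(\sigma_2))$ and $\eta:=(1+\|c\|)\eta_0$. For any $x\in rB$, I would verify the defining inequality $\min_{B(x,\eta)} f_2 \leq \max\{f_1(x),-r\}+\eta$ (the reverse follows by symmetry). If $x\notin F(\sigma_1)$ then $f_1(x)=+\infty$ and the inequality is trivial. If $x\in F(\sigma_1)\cap rB$, then by the definition of $\wh{\mb{d}}_r$ (specifically $F(\sigma_1)\cap rB\subseteq F(\sigma_2)+\eta_0 B$) there exists $y\in F(\sigma_2)$ with $\|y-x\|\leq \eta_0\leq \eta$, so $y\in B(x,\eta)$, and
\begin{equation*}
f_2(y)=\langle c,y\rangle\leq \langle c,x\rangle+\|c\|\,\|y-x\|\leq f_1(x)+\|c\|\eta_0\leq \max\{f_1(x),-r\}+\eta.
\end{equation*}

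Finally, applying Theorem~\ref{thm:estimates_for_approximately_optimal_solutions} with $f\mapsto f_1$, $g\mapsto f_2$ gives
\begin{equation*}
\wh{\mb{d}}_{r}(\epsilon\mhyphen\argmin f_1,\epsilon\mhyphen\argmin f_2)\leq (1+4r\epsilon^{-1})\wh{\delta}^{+}_{r}(f_1,f_2)\leq (1+4r\epsilon^{-1})(1+\|c\|)\wh{\mb{d}}_{r}(F(\sigma_1),F(\sigma_2)),
\end{equation*}
which is the claim. The only delicate point is the case analysis on whether $x$ lies in $F(\sigma_1)$ and the careful use of $\wh{\mb{d}}_r$'s truncated definition to extract a nearby feasible point for $\sigma_2$; I expect the rest to be mechanical once the hypotheses of Theorem~\ref{thm:estimates_for_approximately_optimal_solutions} are checked.
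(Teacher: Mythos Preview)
Your proposal is correct and follows the same overall route as the paper: check that $f_1,f_2$ are proper, lsc, and convex with $\argmin f_i=F^{\opt}(\pi_i)$ and $\min f_i=\nu(\pi_i)$, apply Theorem~\ref{thm:estimates_for_approximately_optimal_solutions} to get $\wh{\mb{d}}_r(\epsilon\mhyphen\argmin f_1,\epsilon\mhyphen\argmin f_2)\le(1+4r\epsilon^{-1})\wh{\delta}^+_r(f_1,f_2)$, and then bound $\wh{\delta}^+_r(f_1,f_2)$ by $(1+\|c\|)\wh{\mb{d}}_r(F(\sigma_1),F(\sigma_2))$.

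The only difference is in how this last bound is obtained. The paper cites Example~7.62 of \citet{rockafellar2009variational} (a general estimate for functions of the form $f_0+\chi_C$), which comes with the side condition $\wh{\mb{d}}_r(F(\sigma_1),F(\sigma_2))<r'-r$ for some auxiliary radius $r'$, and then invokes Proposition~4.37 of the same reference to show that a suitable $r'$ always exists. You instead prove the bound directly from the definition of $\wh{\delta}^+_r$ via the case split on whether $x\in F(\sigma_1)$, using closedness of $F(\sigma_2)$ to extract a nearest feasible point $y$ with $\|y-x\|\le\eta_0$. Your argument is more self-contained and sidesteps the $r'$ bookkeeping entirely; the paper's version has the advantage of pointing to a reusable general lemma. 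Either way the conclusion is the same.
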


\begin{proof} \pf\ The outline of the proof is as follows. We first construct unconstrained convex functions $f_{1}$ and $f_{2}$ whose $\epsilon$-minimizers coincide with $\epsilon$-optimal solutions of $\pi_{1}$ and $\pi_{2}$. Next, we use Theorem \ref{thm:estimates_for_approximately_optimal_solutions} to obtain an inequality relating $\wh{\mb{d}}_{r}(\epsilon\mathrm{\mhyphen argmin} f_{1}, \epsilon\mathrm{\mhyphen argmin} f_{2})$ to the epigraph distance $\wh{\delta}^{+}_{r}(f_{1},f_{2})$. Finally, we use Example 7.62 and Proposition 4.37 from \citet{rockafellar2009variational} to further bound $\wh{\delta}^{+}_{r}(f_{1},f_{2})$ by $\wh{\mb{d}}_{r}(F(\sigma_{1}),F(\sigma_{2}))$.

\begin{enumerate}
\item \emph{Define $f_{1}(x):=\langle c,x \rangle+\chi_{F(\sigma_{1})}(x)$ and $f_{2}(x):=\langle c,x \rangle+\chi_{F(\sigma_{2})}(x)$.} We will show that $f_{1}$ and $f_{2}$ are proper, lower semi-continuous, and convex functions and that the minimizers (and $\epsilon$-minimizers) of $f_{1}$ and $f_{2}$ coincide with the optimal (and $\epsilon$-optimal) solutions of $\pi_{1}$ and $\pi_{2}$.

Since $F(\sigma_{1})$ and $F(\sigma_{2})$ are non-empty, there exist $x$ and $x'$ such that $f_{1}(x)<+\infty$ and $f_{2}(x')<+\infty$. Thus, $f_{1}$ and $f_{2}$ are proper functions.

By Theorem 1.6 from \citet{rockafellar2009variational}, $f_{1}$ and $f_{2}$ are lower semi-continuous if and only if $\mbox{lev}_{\leq\alpha}f_{1}$ and $\mbox{lev}_{\leq\alpha}f_{2}$ are closed for all $\alpha\in\mathbb{R}$, where
%
%
\begin{equation*}
\mbox{lev}_{\leq\alpha}f_{i}:= \{ x\in \mathbb{R}^{n} \mbox{ : } f_{i}(x)\leq \alpha \}=\{x \in F(\sigma_{i}) \mbox{ : } \langle c,x \rangle\leq \alpha \}, \mbox{ for } i = 1, 2.
\end{equation*}
%
Let $f_{01}(x):=\langle c,x \rangle$, and note that $f_{01}$ is continuous on $\mathbb{R}^{n}$, and so $f_{01}^{-1}((-\infty,\alpha])=:\mbox{lev}_{\leq\alpha} f_{01}$ is closed. Since $F(\sigma_{1})$ is closed, $\mbox{lev}_{\leq\alpha}f_{1}=F(\sigma_{1})\cap \mbox{lev}_{\leq\alpha} f_{01}$ is closed. The proof for $\mbox{lev}_{\leq\alpha}f_{2}$ is identical.

Note that the characteristic function $\chi_{A}$ is convex if $A$ is convex. Thus, $f_{1}$ and $f_{2}$ are convex because they are the sum of convex functions.

Finally, by construction, the minimizers (and $\epsilon$-minimizers) of $f_{1}$ and $f_{2}$ are exactly the optimal solutions (and $\epsilon$-optimal solutions) of $\pi_{1}$ and $\pi_{2}$. This equivalence follows because $f_{1}(x)=+\infty$ whenever $x\notin F(\sigma_{1})$ and $f_{2}(x)=+\infty$ whenever $x\notin F(\sigma_{2})$, and so minimizing $f_{1}$ and $f_{2}$ is the same as minimizing $\langle c,x \rangle$ over $F(\sigma_1)$ and $F(\sigma_2)$, respectively.

\item \emph{Use Theorem \ref{thm:estimates_for_approximately_optimal_solutions} to get:}

 \begin{equation}\label{eq:first_ineq_Lemma_71}
 \wh{\mb{d}}_{r}(\epsilon\mathrm{\mhyphen argmin} f_{1}, \epsilon\mathrm{\mhyphen argmin} f_{2}) \leq (1+4r \epsilon^{-1})\wh{\delta}^{+}_{r}(f_{1},f_{2})
\end{equation}

 for all $r > r_{0}$.

We apply Theorem \ref{thm:estimates_for_approximately_optimal_solutions} to $f_{1}$ and $f_{2}$. Theorem \ref{thm:estimates_for_approximately_optimal_solutions} requires that $f_{1}$ and $f_{2}$ are proper, lower semi-continuous, and convex functions, which was shown earlier. The other conditions in Theorem \ref{thm:estimates_for_approximately_optimal_solutions} are satisfied by our assumptions, and the fact that the minimizers of $f_{1}$ and $f_{2}$ coincide with the optimal solutions of $\pi_{1}$ and $\pi_{2}$, so that $f_{1}$ and $f_{2}$ satisfy $\argmin f_{1}\cap r_{0} B\neq \varnothing$ and $\argmin f_{2}\cap r_{0} B\neq \varnothing$ and $\min f_{1}\geq -r_{0}$ and $\min f_{2}\geq -r_{0}$.

\item \emph{Use Example 7.62 from \citet{rockafellar2009variational} to get: $\wh{\delta}^{+}_{r}(f_{1},f_{2})\leq (1+\|c\|)\wh{\mb{d}}_{r}(F(\sigma_{1}),F(\sigma_{2}))$.}

We apply the results of Example 7.62 from \citet{rockafellar2009variational} to $f_{1}$ and $f_{2}$, by setting $\langle c,x \rangle\mapsto f_{01}=f_{02}$, $F(\sigma_{1})\mapsto C_{1}$ and $F(\sigma_{2})\mapsto C_{2}$. For any $r>0$ the constant $\kappa_{i}(r)$ from Example 7.62 can be chosen uniformly to be $\| c\|$, since $f_{01}=f_{02}=\langle c,x \rangle$ so that $|\langle c,\ov{x} \rangle -\langle c,x \rangle|\leq \| c\| \|\ov{x}-x\|$ by the Cauchy-Schwarz inequality. Furthermore, $f_{01}=f_{02}$ so that $| f_{01}(x)-f_{02}(x)|=0$ for all $x$. Thus, for all $0<r<r'<+\infty$ and $\wh{\mb{d}}_{r}(F(\sigma_{1}),F(\sigma_{2}))<r'-r$:
\begin{equation*}
\wh{\delta}^{+}_{r}(f_{1},f_{2})\leq (1+\|c\|)\wh{\mb{d}}_{r}(F(\sigma_{1}),F(\sigma_{2})).
\end{equation*}

If we apply Proposition 4.37 parts (a) and (c) from \citet{rockafellar2009variational}, we obtain
\begin{equation*}
\wh{\mb{d}}_{r}(F(\sigma_{1}),F(\sigma_{2}))\leq \mb{d}_{r}(F(\sigma_{1}),F(\sigma_{2}))) \leq \max\{d(0,F(\sigma_{1})),d(0,F(\sigma_{2})) \} +r.
\end{equation*}
Thus, for \emph{any} $r>0$, by choosing $r':=\max\{d(0,F(\sigma_{1})),d(0,F(\sigma_{2})) \} +2r$, it follows that:
\begin{equation}\label{eq:second_Lemma_Inequality}
\wh{\delta}^{+}_{r}(f_{1},f_{2})\leq (1+\|c\|)\wh{\mb{d}}_{r}(F(\sigma_{1}),F(\sigma_{2})).
\end{equation}
%



\item \emph{Combine the inequalities.} Using the inequalities \eqref{eq:first_ineq_Lemma_71} and \eqref{eq:second_Lemma_Inequality} it follows that:
\begin{equation*}
\wh{\mb{d}}_{r}(\epsilon\mathrm{\mhyphen argmin} f_{1}, \epsilon\mathrm{\mhyphen argmin} f_{2}) \leq (1+4r\epsilon^{-1})(1+\|c\|)\wh{\mb{d}}_{r}(F(\sigma_{1}),F(\sigma_{2})),
\end{equation*}
for all $r>r_{0}$. \hfill $\square$\\

\end{enumerate}

\end{proof}

\begin{lemma}[Distance between feasible solution sets]\label{lem:distFeasReg}
Let $\pi_{0}:=(c,\sigma_{0})\in \interior(\Pi_{f})$ be given. Let $\eta$ satisfying $0 < \eta < \delta^{\Sigma}(\sigma_{0},\Sigma_{i})$ be given. For all $\sigma_{1},\sigma_{2}\in \Sigma_{f}$ satisfying $\delta^{\Sigma}(\sigma_{1},\sigma_{0})\leq \eta$ and $\delta^{\Sigma}(\sigma_{2},\sigma_{0})\leq \eta$, the following inequality holds for  $\pi_{1}:=(c,\sigma_{1})$ and $\pi_{2}:=(c,\sigma_{2})$ and for all $r>0$:
\begin{equation*}
\wh{\mb{d}}_{r}(F(\sigma_{1}),F(\sigma_{2})) \leq \left( \sup_{z\in r B} \frac{\psi(\|z\|)}{\delta^{\Sigma}(\sigma_{0},\Sigma_{i})-\eta} \right) \delta^{\Pi}(\pi_{1},\pi_{2}) = \left(\frac{(1+r)\sqrt{1+r^{2}}}{\delta^{\Sigma}(\sigma_{0},\Sigma_{i})-\eta}\right)\delta^{\Pi}(\pi_{1},\pi_{2}).
\end{equation*}
\end{lemma}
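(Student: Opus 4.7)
The plan is to reduce this lemma to Corollary 4.1 of \citet{canovas2006lipschitz} (the same corollary invoked in Step 4 of the proof of Theorem~\ref{thm:closedness_USC_optimal_set}), which states that for any $\sigma_{0}\in\Sigma_{f}$, any $z^{0}\in F(\sigma_{0})$ and any $\sigma\in\interior(\Sigma_{f})$,
\begin{equation*}
d(z^{0},F(\sigma))\;\leq\;\frac{\psi(\|z^{0}\|)}{\delta^{\Sigma}(\sigma,\Sigma_{i})}\,\delta^{\Sigma}(\sigma,\sigma_{0}).
\end{equation*}
I would then combine this pointwise estimate with the two observations (i) $\delta^{\Pi}(\pi_{1},\pi_{2})=\delta^{\Sigma}(\sigma_{1},\sigma_{2})$ since $\pi_{1}$ and $\pi_{2}$ share the cost function $c$, and (ii) $\psi(\|z^{0}\|)\leq \psi(r)=(1+r)\sqrt{1+r^{2}}$ whenever $\|z^{0}\|\leq r$ (monotonicity of $\psi$ from Definition~\ref{def:constantDefinition}).

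First I would verify that the corollary is applicable, i.e.\ that $\sigma_{1},\sigma_{2}\in\interior(\Sigma_{f})$ and that $\delta^{\Sigma}(\sigma_{j},\Sigma_{i})>0$. A reverse triangle inequality on $\delta^{\Sigma}$ gives
\begin{equation*}
\delta^{\Sigma}(\sigma_{j},\Sigma_{i})\;\geq\;\delta^{\Sigma}(\sigma_{0},\Sigma_{i})-\delta^{\Sigma}(\sigma_{0},\sigma_{j})\;\geq\;\delta^{\Sigma}(\sigma_{0},\Sigma_{i})-\eta\;>\;0
\end{equation*}
for $j=1,2$, so each $\sigma_{j}$ is bounded away from $\Sigma_{i}$; in particular each $\sigma_{j}\in\interior(\Sigma_{f})$. (This is the small technical step; everything else is a symmetric computation.) Next, fix $z^{0}\in F(\sigma_{1})\cap rB$ and apply the corollary with $\sigma_{1}\mapsto\sigma_{0}$ and $\sigma_{2}\mapsto\sigma$ to obtain
\begin{equation*}
d(z^{0},F(\sigma_{2}))\;\leq\;\frac{\psi(\|z^{0}\|)}{\delta^{\Sigma}(\sigma_{2},\Sigma_{i})}\,\delta^{\Sigma}(\sigma_{1},\sigma_{2})\;\leq\;\frac{(1+r)\sqrt{1+r^{2}}}{\delta^{\Sigma}(\sigma_{0},\Sigma_{i})-\eta}\,\delta^{\Pi}(\pi_{1},\pi_{2}).
\end{equation*}
Taking the supremum over $z^{0}\in F(\sigma_{1})\cap rB$ bounds $e((F(\sigma_{1}))_{r},F(\sigma_{2}))$ by the same expression.

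Finally I would repeat the identical argument with the roles of $\sigma_{1}$ and $\sigma_{2}$ swapped to bound $e((F(\sigma_{2}))_{r},F(\sigma_{1}))$, using that $\delta^{\Sigma}$ and $\delta^{\Pi}$ are symmetric. Taking the maximum of the two excesses and invoking the equivalent characterization
\begin{equation*}
\wh{\mb{d}}_{r}(F(\sigma_{1}),F(\sigma_{2}))\;=\;\max\bigl\{e((F(\sigma_{1}))_{r},F(\sigma_{2})),\;e((F(\sigma_{2}))_{r},F(\sigma_{1}))\bigr\}
\end{equation*}
from Definition~\ref{def:set_distance_definitions} yields the claimed inequality. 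The main obstacle is essentially just the bookkeeping around showing $\sigma_{1},\sigma_{2}\in\interior(\Sigma_{f})$ with the quantitative lower bound $\delta^{\Sigma}(\sigma_{j},\Sigma_{i})\geq \delta^{\Sigma}(\sigma_{0},\Sigma_{i})-\eta$; once that is in hand the lemma is a direct corollary of \citet{canovas2006lipschitz}'s Corollary 4.1 together with the evaluation of $\psi$ at $\|z^{0}\|\leq r$.
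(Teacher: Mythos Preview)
Your proposal is correct and follows essentially the same route as the paper: both apply Corollary~4.1 of \citet{canovas2006lipschitz} pointwise, bound $\delta^{\Sigma}(\sigma_{j},\Sigma_{i})$ below by $\delta^{\Sigma}(\sigma_{0},\Sigma_{i})-\eta$ via the (reverse) triangle inequality, take the supremum over $z^{0}\in rB$, and finish by symmetry and the excess characterization of $\wh{\mb{d}}_{r}$. The only cosmetic difference is that the paper shows $\sigma_{j}\in\interior(\Sigma_{f})$ by a short contradiction argument using $\delta^{\Sigma}(\sigma_{0},\Sigma_{i})=\delta^{\Sigma}(\sigma_{0},\bd(\Sigma_{f}))$, whereas you obtain it directly from $\delta^{\Sigma}(\sigma_{j},\Sigma_{i})>0$.
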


\begin{proof} \pf\ For brevity, write $F(\sigma_{1})$ and $F(\sigma_{2})$ as $F_{1}$ and $F_{2}$, respectively. By \eqref{eq:first_definition_of_hat_D}, $\wh{\mb{d}}_{r}(F_{1},F_{2}):= \max\left\{e((F_{1})_{r},F_{2}) , e((F_{2})_{r},F_{1})\right\}$. We will show that both $e((F_{1})_{r},F_{2})$ and $e((F_{2})_{r},F_{1})$ are bounded by $\sup_{z\in r B}\frac{\psi(\| z\|)}{\delta^{\Sigma}(\sigma_{0},\Sigma_{i})-\eta}\delta^{\Pi}(\pi_{1},\pi_{2})$. We will only prove this inequality for $e((F_{1})_{r},F_{2})$, and omit the analogous proof for $e((F_{2})_{r},F_{1})$. Let  $\eta$ satisfying $0 < \eta < \delta^{\Sigma}(\sigma_{0},\Sigma_{i})$ and $\sigma_{1},\sigma_{2}\in \Sigma_{f}$ satisfying $\delta^{\Sigma}(\sigma_{1},\sigma_{0})\leq \eta$ and $\delta^{\Sigma}(\sigma_{2},\sigma_{0})\leq \eta$ be given.

\begin{enumerate}
\item \emph{First, $\sigma_{1},\sigma_{2}\in \interior(\Sigma_{f})$.} By assumption, $\sigma_{0}\in \interior(\Sigma_{f})$. By Corollary 1 from \citet{canovas2005distance}, since $\sigma_{0}\in \Sigma_{f}$, we have $\delta^{\Sigma}(\sigma_{0},\Sigma_{i})=\delta^{\Sigma}(\sigma_{0},\bd(\Sigma_{f}))$. Suppose to the contrary that, $\sigma_{1}\in \bd(\Sigma_{f})$. Then by definition $\delta^{\Sigma}(\sigma_{0},\bd(\Sigma_{f})) \leq \delta^{\Sigma}(\sigma_{1},\sigma_{0})\leq \eta < \delta^{\Sigma}(\sigma_{0},\Sigma_{i})= \delta^{\Sigma}(\sigma_{0},\bd(\Sigma_{f}))$, which is a contradiction. The same reasoning holds for $\sigma_{2}$.
\item \emph{Let $z^{1}\in F_{1}$ be given. Then $\sup_{z^{1}\in F_{1}\cap r B} d(z^{1},F_{2})\leq \sup_{z^{1}\in F_{1}\cap r B} \frac{\psi(\| z^{1} \|)}{\delta^{\Sigma}(\sigma_{2},\Sigma_{i})}\delta^{\Sigma}(\sigma_{1},\sigma_{2})$.}

Apply Corollary 4.1 from \citet{canovas2006lipschitz} with $\sigma_{1}\mapsto \sigma_{0}$ ($\sigma_{0}$ in this instance referring to the $\sigma_{0}$ in \emph{that} Corollary) and $\sigma_{2}\mapsto\sigma$, with the required assumption that $\sigma_{1},\sigma_{2}\in \interior(\Sigma_{f})$ being satisfied. Thus, for any $z^{1}\in F_{1}$, we have
\begin{equation*}
d(z^{1},F_{2})\leq \frac{\psi(\| z^{1} \|)}{\delta^{\Sigma}(\sigma_{2},\Sigma_{i})}\delta^{\Sigma}(\sigma_{1},\sigma_{2}).
\end{equation*}
In particular, this inequality holds for any $z^{1}\in F_{1}\cap r B$:
\begin{equation}\label{eq:the_left_side_is_exactly_eF1rF2}
\sup_{z^{1}\in F_{1}\cap r B} d(z^{1},F_{2})\leq \sup_{z^{1}\in F_{1}\cap r B} \frac{\psi(\| z^{1} \|)}{\delta^{\Sigma}(\sigma_{2},\Sigma_{i})}\delta^{\Sigma}(\sigma_{1},\sigma_{2}).
\end{equation}

\item \emph{Then $e((F_{1})_{r},F_{2})\leq \sup_{z\in r B} \frac{\psi(\| z \|)}{\delta^{\Sigma}(\sigma_{2},\Sigma_{i})}\delta^{\Sigma}(\sigma_{1},\sigma_{2}).$}

The left hand side of \eqref{eq:the_left_side_is_exactly_eF1rF2} is precisely the definition of $e((F_{1})_{r},F_{2})$. The right hand side of \eqref{eq:the_left_side_is_exactly_eF1rF2} is at most the supremum over $z\in r B$:
\begin{equation*}
e((F_{1})_{r},F_{2})\leq \sup_{z\in r B} \frac{\psi(\| z \|)}{\delta^{\Sigma}(\sigma_{2},\Sigma_{i})}\delta^{\Sigma}(\sigma_{1},\sigma_{2}).
\end{equation*}

\item \emph{Finally, $e((F_{1})_{r},F_{2}) \leq \sup_{z\in r B}\frac{\psi(\| z\|)}{\delta^{\Sigma}(\sigma_{0},\Sigma_{i})-\eta}\delta^{\Pi}(\pi_{1},\pi_{2})$.}

We first show that $\delta^{\Sigma}(\sigma_{2},\Sigma_{i}) \geq \delta^{\Sigma}(\sigma_{0},\Sigma_{i})-\eta$:
\begin{eqnarray}
\delta^{\Sigma}(\sigma_{2},\Sigma_{i}) &:=& \inf_{\sigma'\in\Sigma_{i}}\delta^{\Sigma}(\sigma_{2},\sigma')= \inf_{\sigma'\in\Sigma_{i}}\delta^{\Sigma}(\sigma',\sigma_{2})\\
&\geq & \inf_{\sigma'\in\Sigma_{i}} (\delta^{\Sigma}(\sigma',\sigma_{0}) - \delta^{\Sigma}(\sigma_{0},\sigma_{2})) \label{eq:first_inequality_by_triangle}\\
&\geq & \delta^{\Sigma}(\sigma_{0},\Sigma_{i}) -\eta, \label{eq:second_inequality_by_definition}
\end{eqnarray}
where \eqref{eq:first_inequality_by_triangle} follows from the triangle inequality and \eqref{eq:second_inequality_by_definition} follows because $\delta^{\Sigma}(\sigma_{0},\Sigma_{i}):=\inf_{\sigma'\in\Sigma_{i}}\delta^{\Sigma}(\sigma',\sigma_{0})$ and $\delta^{\Sigma}(\sigma_{0},\sigma_{2})\leq \eta$. Since $\pi_{1}$ and $\pi_{2}$ have the same cost function, $\delta^{\Sigma}(\sigma_{1},\sigma_{2})=\delta^{\Pi}(\pi_{1},\pi_{2})$.

\item \emph{Then, $\wh{\mb{d}}_{r}(F(\sigma_{1}),F(\sigma_{2})) \leq \left( \sup_{z\in r B} \frac{\psi(\|z\|)}{\delta^{\Sigma}(\sigma_{0},\Sigma_{i})-\eta} \right) \delta^{\Pi}(\pi_{1},\pi_{2}) = \left(\frac{(1+r)\sqrt{1+r^{2}}}{\delta^{\Sigma}(\sigma_{0},\Sigma_{i})-\eta}\right)\delta^{\Pi}(\pi_{1},\pi_{2}).$}

The proof of $e((F_{2})_{r},F_{1}) \leq \sup_{z\in r B}\frac{\psi(\| z\|)}{\delta^{\Sigma}(\sigma_{0},\Sigma_{i})-\eta}\delta^{\Pi}(\pi_{1},\pi_{2})$ follows by interchanging the roles of $\sigma_{1}$ and $\sigma_{2}$ in the previous steps. Thus we obtain the inequality $\wh{\mb{d}}_{r}(F(\sigma_{1}),F(\sigma_{2})) \leq \left( \sup_{z\in r B} \frac{\psi(\|z\|)}{\delta^{\Sigma}(\sigma_{0},\Sigma_{i})-\eta} \right) \delta^{\Pi}(\pi_{1},\pi_{2})$. The equality follows by the definition and monotonicity of $\psi(\|z\|)$ and that $\|z\|\leq r$ for $z\in rB$. \hfill $\square$

\end{enumerate}

\end{proof}

\begin{theorem}[Lipschitz continuity of the $\epsilon$-optimal solution set for LSIO problems]\label{thm:lipschitz_continuity_of_epsilon_approximate}
Let $\pi_{0}:=(c,\sigma_{0})\in \interior(\Pi_{f})$ be given. Let $\eta$ satisfying $0 < \eta < \delta^{\Sigma}(\sigma_{0},\Sigma_{i})$ be given. Suppose $\pi_{1}:=(c,\sigma_{1})$ and $\pi_{2}:=(c,\sigma_{2})$ are LSIO problems with the following properties:
\begin{enumerate}
\item $\delta^{\Sigma}(\sigma_{1},\sigma_{0})\leq \eta$ and $\delta^{\Sigma}(\sigma_{2},\sigma_{0})\leq \eta$.
\item There exists an $r_{0}>0$ such that $r_{0} B \cap F^{\opt}(\pi_{1}) \neq \varnothing$ and $r_{0} B \cap F^{\opt}(\pi_{2}) \neq \varnothing$ and $\nu(\pi_{1})>-r_{0}$ and $\nu(\pi_{2})>-r_{0}$.
\end{enumerate}
Define the extended real valued functions $f_{1}(x):=\langle c,x \rangle + \chi_{F(\sigma_{1})}(x)$ and $f_{2}(x):= \langle c,x \rangle+ \chi_{F(\sigma_{2})}(x)$.
Then we have, for all $r> r_{0}$:
\begin{equation*}
\wh{\mb{d}}_{r}(\epsilon\mathrm{\mhyphen argmin} f_{1}, \epsilon\mathrm{\mhyphen argmin} f_{2}) \leq (1+4r\epsilon^{-1})(1+\|c\|) \left(\frac{(1+r)\sqrt{1+r^{2}}}{\delta^{\Sigma}(\sigma_{0},\Sigma_{i})-\eta}\right)\delta^{\Pi}(\pi_{1},\pi_{2}).
\end{equation*}
\end{theorem}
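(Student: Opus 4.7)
The proof plan is essentially a chaining argument that composes Lemmas \ref{lem:distApproxOpt} and \ref{lem:distFeasReg}. The theorem's right-hand side already factors into the product of the two Lipschitz-type constants from those lemmas, so the main task is to verify that both lemmas can be applied to $\pi_1$ and $\pi_2$, and then combine their conclusions multiplicatively.

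First, I would check the hypotheses of Lemma \ref{lem:distApproxOpt}. The requirement that there exists $r_0 > 0$ with $r_0 B \cap F^{\opt}(\pi_i) \neq \varnothing$ and $\nu(\pi_i) > -r_0$ for $i=1,2$ is assumption 2 of the theorem verbatim, so the lemma applies and gives
\begin{equation*}
\wh{\mb{d}}_{r}(\epsilon\mathrm{\mhyphen argmin}\, f_{1}, \epsilon\mathrm{\mhyphen argmin}\, f_{2}) \leq (1+4r\epsilon^{-1})(1+\|c\|)\,\wh{\mb{d}}_{r}(F(\sigma_{1}),F(\sigma_{2}))
\end{equation*}
for every $r > r_0$.

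Next I would check the hypotheses of Lemma \ref{lem:distFeasReg}. We have $\pi_0 \in \interior(\Pi_f)$ and the distance bounds $\delta^{\Sigma}(\sigma_j,\sigma_0) \leq \eta < \delta^{\Sigma}(\sigma_0,\Sigma_i)$ by assumption. The only nontrivial verification is that $\sigma_1, \sigma_2 \in \Sigma_f$; this follows from assumption 2, since $r_0 B \cap F^{\opt}(\pi_i) \neq \varnothing$ forces $F(\sigma_i) \neq \varnothing$. Lemma \ref{lem:distFeasReg} then yields
\begin{equation*}
\wh{\mb{d}}_{r}(F(\sigma_{1}),F(\sigma_{2})) \leq \left(\frac{(1+r)\sqrt{1+r^{2}}}{\delta^{\Sigma}(\sigma_{0},\Sigma_{i})-\eta}\right)\delta^{\Pi}(\pi_{1},\pi_{2})
\end{equation*}
for every $r > 0$, and in particular for every $r > r_0$.

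Substituting this into the inequality from the first step immediately produces the stated bound. There is no real obstacle here beyond the hypothesis-checking: both lemmas were set up precisely so that their constants compose cleanly, and since $\pi_1$ and $\pi_2$ share the same cost function $c$, the function-distance argument in Lemma \ref{lem:distApproxOpt} and the feasible-set argument in Lemma \ref{lem:distFeasReg} both route through the same normed factor $\|c\|$ without any incompatibility. The one subtlety worth mentioning in the writeup is the implicit use of the fact that $\pi_0$ and $\pi_1,\pi_2$ all share the cost function $c$, which is why $\delta^{\Sigma}(\sigma_1,\sigma_2) = \delta^{\Pi}(\pi_1,\pi_2)$ and why the $f_i$'s are defined using that same $c$.
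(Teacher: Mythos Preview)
Your proposal is correct and matches the paper's approach exactly: the paper's proof is a one-line statement that the result follows from the direct application of Lemmas~\ref{lem:distApproxOpt} and~\ref{lem:distFeasReg}. Your hypothesis-checking (in particular, noting that $\sigma_1,\sigma_2\in\Sigma_f$ follows from assumption~2) is a bit more explicit than what the paper writes, but the argument is identical.
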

\begin{proof} \pf\ The result follows from the direct application of Lemma \ref{lem:distApproxOpt} and \ref{lem:distFeasReg}.\hfill $\square$
\end{proof}

Theorem~\ref{thm:lipschitz_continuity_of_epsilon_approximate}, derived by combining variational analysis and LSIO results in the literature, provide new insight into the quantitative continuity of $\epsilon$-optimal solution sets, supplementing the results in \citet{canovas2006lipschitz} for optimal solution sets.

\subsection{Lipschitz continuity of the $\epsilon$-optimal solution set for RO problems}

Using Theorem~\ref{thm:lipschitz_continuity_of_epsilon_approximate}, we can now prove Lipschitz continuity of the $\epsilon$-optimal solution set for RO problems.

\begin{theorem}[Lipschitz continuity of the $\epsilon$-optimal solution set for RO problems]\label{thm:Lipschitz_continuity_epsilon_approx_RO}
Let $\RO(\natmbU)$ be a RO problem of the form \eqref{eq:multipleRobustFormINITIAL} with non-empty compact and convex uncertainty set $U_{\alpha}$ in each constraint, indexed by $\alpha\in I$, with fixed cost function $c$. Suppose that:
\begin{enumerate}
\item $\RO(\natmbU)$ satisfies the strong Slater condition, with strong Slater constant $\rho>0$,
\item $F^{\opt}(\RO(\natmbU))$ is non-empty and bounded, 
\item $\sup\{ -b^{\alpha}, \alpha\in I \}<+\infty$.
\end{enumerate}
Define the LSIO problem $\pi_{\natmbU}\in\interior(\Pi_{s})$, where:
\begin{equation*}
\sigma_{\natmbU}(\mb{t}) = \sigma_{\natmbU}((\alpha,t,s)) := \begin{cases}
(t, s) & \mbox{if } (t,s)\in U_{\alpha},\\
(0_{n},-\rho) & \mbox{if } (t,s)\notin U_{\alpha}.
\end{cases}
\end{equation*}
Let $\eta>0$ satisfying $0<\eta < \delta^{\Pi}(\pi_{\natmbU},\bd(\Pi_{s}))$ be given. Let
$r_{0}> 0$ satisfy $r_{0}B \cap \epsilon\mathrm{\mhyphen argmin}(\RO(\natmbU))\neq \varnothing$ and $\nu(\RO(\natmbU))>-r_{0}$. Suppose $\natmbV$, with compact and convex uncertainty sets $V_{\alpha}$, satisfies the following conditions:
\begin{enumerate}
\item $d_{\natural}(\natmbU,\natmbV)<\eta$,
\item $r_{0}B\cap \epsilon\mathrm{\mhyphen argmin}(\RO(\natmbV))\neq \varnothing$,
\item $\nu(\RO(\natmbV))>-r_{0}$.
\end{enumerate}
Then, for all $r>r_{0}$:
\begin{multline*}
\wh{\mb{d}}_{r}(\epsilon\mathrm{\mhyphen argmin} (\RO(\natmbU)), \epsilon\mathrm{\mhyphen argmin} (\RO(\natmbV))) \\
\leq (1+4r\epsilon^{-1})(1+\|c\|) \left(\frac{(1+r)\sqrt{1+r^{2}}}{\delta^{\Sigma}(\sigma_{\natmbU},\Sigma_{i})-\eta}\right)\natD(\natmbU,\natmbV).
\end{multline*}

\end{theorem}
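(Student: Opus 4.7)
The plan is to reduce the robust result to the LSIO result (Theorem~\ref{thm:lipschitz_continuity_of_epsilon_approximate}) by routing everything through the multi-constraint RO--LSIO transformation of Theorem~\ref{thm:ROLSIOtransformMultiple}, exactly in the spirit of the proof of Theorem~\ref{thm:Lipschitz_multiple_coeff_cost_RHS_unc}. First, I set $T := I\times\mathbb{R}^{n}\times\mathbb{R}$ and construct the two LSIO problems $\pi_{\natmbU;\natmbV} := (c,\sigma_{\natmbU;\natmbV})$ and $\pi_{\natmbV;\natmbU} := (c,\sigma_{\natmbV;\natmbU})$ as in Theorem~\ref{thm:ROLSIOtransformMultiple}. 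By that theorem, both problems are well-defined, $\pi_{\natmbU;\natmbV}$ is equivalent to $\RO(\natmbU)$, $\pi_{\natmbV;\natmbU}$ is equivalent to $\RO(\natmbV)$, and $\delta^{\Pi}(\pi_{\natmbU;\natmbV},\pi_{\natmbV;\natmbU}) = \natD(\natmbU,\natmbV)$. Moreover, since every constraint of $\pi_{\natmbU;\natmbV}$ already appears in $\pi_{\natmbU}$ and vice versa (trivial/redundant constraints aside), $\pi_{\natmbU;\natmbV}\sim_{\Pi}\pi_{\natmbU}$, which inherits the strong Slater condition and membership in $\interior(\Pi_{s})\subseteq\interior(\Pi_{f})$ as in the proof of Theorem~\ref{thm:Lipschitz_multiple_coeff_cost_RHS_unc}.

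Next, I apply Theorem~\ref{thm:lipschitz_continuity_of_epsilon_approximate} with the identifications $\pi_{0}\mapsto \pi_{\natmbU;\natmbV}$, $\pi_{1}\mapsto \pi_{\natmbU;\natmbV}$, and $\pi_{2}\mapsto \pi_{\natmbV;\natmbU}$. The hypotheses are checked as follows. Because $\pi_{\natmbU;\natmbV}\in\interior(\Pi_{f})$, the ambient assumption on $\pi_{0}$ holds. With $\sigma_{1}:=\sigma_{\natmbU;\natmbV}$ and $\sigma_{2}:=\sigma_{\natmbV;\natmbU}$ we have $\delta^{\Sigma}(\sigma_{1},\sigma_{0}) = 0 \leq \eta$ and $\delta^{\Sigma}(\sigma_{2},\sigma_{0}) = \natD(\natmbU,\natmbV) < \eta$. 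The $r_{0}$-assumptions on the optimal values and $\epsilon$-optimal sets transfer verbatim from the hypotheses on $\RO(\natmbU)$ and $\RO(\natmbV)$ to $\pi_{\natmbU;\natmbV}$ and $\pi_{\natmbV;\natmbU}$ because each LSIO is equivalent to its corresponding RO problem (same feasible set, cost, optimal value, $\epsilon$-optimal set). Consequently,
\begin{equation*}
\wh{\mb{d}}_{r}(\epsilon\mathrm{\mhyphen argmin}\, f_{1},\, \epsilon\mathrm{\mhyphen argmin}\, f_{2}) \leq (1+4r\epsilon^{-1})(1+\|c\|)\left(\frac{(1+r)\sqrt{1+r^{2}}}{\delta^{\Sigma}(\sigma_{\natmbU;\natmbV},\Sigma_{i})-\eta}\right)\delta^{\Pi}(\pi_{\natmbU;\natmbV},\pi_{\natmbV;\natmbU}),
\end{equation*}
where $f_{j}(x) = \langle c,x\rangle + \chi_{F(\sigma_{j})}(x)$.

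Finally, I translate the right-hand side back into RO quantities. Since $\pi_{\natmbU;\natmbV}\sim_{\Pi}\pi_{\natmbU}$, Lemma~\ref{lem:LipschitzInv}(2) gives $\delta^{\Sigma}(\sigma_{\natmbU;\natmbV},\Sigma_{i}) = \delta^{\Sigma}(\sigma_{\natmbU},\Sigma_{i})$, and Theorem~\ref{thm:ROLSIOtransformMultiple} gives $\delta^{\Pi}(\pi_{\natmbU;\natmbV},\pi_{\natmbV;\natmbU}) = \natD(\natmbU,\natmbV)$. By construction $f_{1}$ and $f_{2}$ are the convex indicator-plus-linear functions whose $\epsilon$-minimizers equal the $\epsilon$-optimal sets of $\pi_{\natmbU;\natmbV}$ and $\pi_{\natmbV;\natmbU}$, which in turn equal $\epsilon\mathrm{\mhyphen argmin}\,\RO(\natmbU)$ and $\epsilon\mathrm{\mhyphen argmin}\,\RO(\natmbV)$ respectively. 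Substituting these three equalities yields the claimed bound.

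The only non-bookkeeping step is checking the LSIO assumption $\pi_{\natmbU;\natmbV}\in\interior(\Pi_{f})$ together with $\eta<\delta^{\Sigma}(\sigma_{\natmbU;\natmbV},\Sigma_{i})$; the former is handled by the Slater-to-interior route already used in Theorem~\ref{thm:Lipschitz_multiple_coeff_cost_RHS_unc}, while the latter follows from $\eta<\delta^{\Pi}(\pi_{\natmbU},\bd(\Pi_{s}))\leq \delta^{\Sigma}(\sigma_{\natmbU},\Sigma_{i}) = \delta^{\Sigma}(\sigma_{\natmbU;\natmbV},\Sigma_{i})$, using Corollary~1 of \citet{canovas2005distance} plus Lemma~\ref{lem:LipschitzInv}. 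This is the main technical point; everything else is transport of notation through the RO--LSIO equivalence.
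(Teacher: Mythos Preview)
Your proposal is correct and follows essentially the same approach as the paper: the paper's proof simply says the steps are ``almost identical to that of Theorem~\ref{thm:Lipschitz_multiple_coeff_cost_RHS_unc}, with the only major difference in that we have to apply Theorem~\ref{thm:lipschitz_continuity_of_epsilon_approximate} in place of Theorem~\ref{thm:LipschitzContinuityLSIO},'' and that is exactly what you do---construct $\pi_{\natmbU;\natmbV}$ and $\pi_{\natmbV;\natmbU}$ via Theorem~\ref{thm:ROLSIOtransformMultiple}, verify the hypotheses of Theorem~\ref{thm:lipschitz_continuity_of_epsilon_approximate}, and translate back using $\Pi$-equivalence and Lemma~\ref{lem:LipschitzInv}. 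Your write-up is in fact more detailed than the paper's own proof, and the extra justification you give for $\eta<\delta^{\Sigma}(\sigma_{\natmbU;\natmbV},\Sigma_{i})$ is a useful clarification.
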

\begin{remark}
We may replace the set of conditions on $\natmbV$ with the two conditions:
\begin{enumerate}
\item $d_{\natural}(\natmbU,\natmbV)<\min\left\{ \eta, \frac{\nu(\RO(\natmbU))+r_{0}}{L(\pi_{\natmbU},\eta)} \right\}$,
\item $r_{0}B\cap \epsilon\mathrm{\mhyphen argmin}(\RO(\natmbV))\neq \varnothing$.
\end{enumerate}
The condition $d_{\natural}(\natmbU,\natmbV)<\min\left\{ \eta, \frac{\nu(\RO(\natmbU))+r_{0}}{L(\pi_{\natmbU},\eta)} \right\}$ replaces conditions 1 and 2 in Theorem \ref{thm:Lipschitz_continuity_epsilon_approx_RO} because we can apply Theorem \ref{thm:Lipschitz_multiple_coeff_cost_RHS_unc} to guarantee:
\begin{eqnarray*}
\nu(\RO(\natmbU))-\nu(\RO(\natmbV)) & < &  L(\pi_{\natmbU},\eta) d_{\natural}(\natmbU,\natmbV),\\
& \leq & \nu(\RO(\natmbU))+r_{0},
\end{eqnarray*}
\end{remark}
and hence $\nu(\natmbV) >-r_{0}.$


\begin{proof}\pf\ The steps of the proof are almost identical to that of Theorem \ref{thm:Lipschitz_multiple_coeff_cost_RHS_unc}, with the only major difference in that we have to apply Theorem \ref{thm:lipschitz_continuity_of_epsilon_approximate} in place of Theorem \ref{thm:LipschitzContinuityLSIO}. All the steps of the proof are shown in the same way as in Theorem \ref{thm:Lipschitz_multiple_coeff_cost_RHS_unc}. \hfill $\square$\\

\end{proof}

\section{Conclusion}
In this paper, we showed that robust linear optimization and robust LSIO is continuous with respect to perturbations in the uncertainty set. In particular, by constructing an explicit RO-LSIO transformation, we showed that the optimal value and $\epsilon$-optimal solution set mappings are Lipschitz continuous, which are novel quantitative stability results for robust optimization. We also showed that the optimal solution set mapping is closed and upper semi-continuous under less restrictive conditions on the uncertainty set compared with previous results in the literature. In addition, the robust LSIO stability results may provide insight to understanding stability for general robust convex optimization.


%


%
%
%

\appendix

\section{Facts about sets of LSIO problems.}\label{app:set_LSIO_problems_relationships}

\begin{remark}[Non-emptiness of constraint sets and problem sets]\label{rem:non_Empty_Constraints}
The sets $\Sigma_{f}$, $\bd(\Sigma_{f})$ and $\Sigma_{i}$ are non-empty. Furthermore, the sets $\Pi_{f}$, $\bd(\Pi_{f})$, $\Pi_{i}$, and $\Pi_{s}$ are non-empty.

\end{remark}
\begin{proof}
\pf\
Consider constraint system $\sigma_{0}$ with $(0_{n},0)$, that is, $\langle 0_{n}, x\rangle \geq 0$, as the only constraint (up to some multiplicity). Note that $\sigma_{0}$ is in both $\Sigma_{f}$ and $\bd(\Sigma_{f})$; the latter inclusion is obtained by recognizing that the constraint systems $\sigma_{\epsilon}$ with $\langle 0_{n},x \rangle \geq \epsilon$ as the only constraint satisfy $\sigma_{\epsilon}\in \Sigma_{i}$  for any $\epsilon>0$. The fact that $\sigma_{\epsilon}\in \Sigma_{i}$  for any $\epsilon>0$ also shows that $\Sigma_{i}$ is non-empty.

The same examples with the cost function $c=0_{n}$ for all of them prove that $\Pi_{f}$, $\bd(\Pi_{f})$, $\Pi_{i}$, and $\Pi_{s}$ are non-empty. \hfill $\square$
\end{proof}



\begin{remark}[Distance to $\Pi_{\infty}$]\label{rem:distance_to_Pi_infty}
The following hold:
\begin{eqnarray*}
\sigma\in\cl(\Sigma_{f}) &\implies& \sigma \notin \Sigma_{\infty}\\
\pi\in\cl(\Pi_{f}) &\implies& \pi \notin \Pi_{\infty}.
\end{eqnarray*}
\end{remark}
\begin{proof}
\pf\
We prove the first statement. By Corollary 1 from \citet{canovas2005distance}, if $\sigma\in \Sigma_{f}$ then $\delta^{\Sigma}(\sigma,\Sigma_{i})=\delta^{\Sigma}(\sigma,\bd(\Sigma_{f}))$. Remark 2 from \citet{canovas2005distance} states $+\infty>\delta^{\Sigma}(\sigma,\Sigma_{i})=\delta^{\Sigma}(\sigma,\bd(\Sigma_{f}))$. Thus, by definition, $\sigma\notin \Sigma_{\infty}$. If $\sigma\in \bd(\Sigma_{f})$, then by definition $\sigma\notin \Sigma_{\infty}$.

We prove the second statement. If $\pi:=(c,\sigma)\in\Pi_{f}$, by definition, $\sigma\in\Sigma_{f}$. By the first statement, $\sigma\notin\Sigma_{\infty}$. By definition,
\begin{equation*}
\delta^{\Pi}(\pi,\bd(\Pi_{f})):=\inf_{\pi'\in\bd(\Pi_{f})}\delta^{\Pi}(\pi,\pi')=\inf_{\pi':=(c',\sigma')\in\bd(\Pi_{f})}\max\{\|c-c'\|, \delta^{\Sigma}(\sigma,\sigma') \}.
\end{equation*}
Consider the restricted subset of points in $\bd(\Pi_{f})$ such that $\pi'=(c,\sigma')$, i.e. the problems with all the \emph{same} cost function as $\pi$, which we denote $\bd(\Pi_{f})\cap \{ \pi' \mbox{ : } c'=c \}$. Then:
\begin{equation}\label{eq:problem_distance}
\delta^{\Pi}(\pi,\bd(\Pi_{f})) = \inf_{\pi':=(c',\sigma')\in\bd(\Pi_{f})}\max\{\|c-c'\|, \delta^{\Sigma}(\sigma,\sigma') \} \leq \inf_{\pi'\in \bd(\Pi_{f})\cap \{ \pi' \mbox{ \tiny{:} } c'=c \}} \max\{ 0 , \delta^{\Sigma}(\sigma,\sigma') \}.
\end{equation}
The last term in \eqref{eq:problem_distance} is the same as $\delta^{\Sigma}(\sigma,\bd(\Sigma_{f}))$. Since $\delta^{\Sigma}(\sigma,\bd(\Sigma_{f}))<+\infty$, it follows that $\delta^{\Pi}(\pi,\bd(\Pi_{f}))<+\infty$. Thus, by definition, $\pi\notin\Pi_{\infty}$. If $\pi\in \bd(\Pi_{f})$, then by definition $\pi\notin \Pi_{\infty}$.
 \hfill $\square$
\end{proof}



\section{Proofs of lemmas in Section \ref{sec:Lipschitz_constant_invariance}.}\label{app:proof_clairfying_lemmas}


\subsection{Proof of Lemma \ref{lem:distIllPosed}}\label{app:proof_Lemma_dist_Ill_Posed}

\distillposed*

\begin{remark}
Note that in \cite{canovas2005distance}, $\Sigma_{s}$ refers to ``strongly infeasible'', which conflicts with the notation of $\Pi_{s}$ which refers to ``solvable''. For the remainder of this proof, we will use $\Sigma_{si}$ to denote strongly infeasible to distinguish it from $\Pi_{s}$. The set of strongly infeasible problems refers to the systems which contain a finite subset of constraints that is infeasible. With this definition, it is clear that $\Sigma_{si}$ is non-empty, because the constraint system with $\langle 0_{n},x\rangle \geq 1$ as a constraint is strongly infeasible.  For brevity, write $H(\sigma_{0})$ as $H$.
\end{remark}

\begin{proof} \pf\ All we need to show is that $\delta^{\Sigma}(\sigma_{0},\Sigma_{i}) = \delta^{\Sigma}(\sigma_{0},\bd(\Sigma_{si}))$, because $\delta^{\Sigma}(\sigma_{0},\bd(\Sigma_{si})) = d(0_{n+1},\bd(H))$ follows from Theorem 6 in \citet{canovas2005distance}.

First, we show $\delta^{\Sigma}(\sigma_{0},\Sigma_{i}) \le \delta^{\Sigma}(\sigma_{0},\bd(\Sigma_{si}))$.  Since $\Sigma_{si}\subseteq\Sigma_{i}$,
\begin{equation}\label{eq:i_leq_si}
\delta^{\Sigma}(\sigma_{0},\Sigma_{i})\leq \delta^{\Sigma}(\sigma_{0},\Sigma_{si}).
\end{equation}
By Corollary 1 in \citet{canovas2005distance}, since $\sigma_{0}\notin \Sigma_{si}\subsetneq \Sigma$,
\begin{equation}\label{eq:si_equal_bdsi}
\delta^{\Sigma}(\sigma_{0},\Sigma_{si})=\delta^{\Sigma}(\sigma_{0},\bd(\Sigma_{si})).
\end{equation}
Thus,
\begin{equation}
\delta^{\Sigma}(\sigma_{0},\Sigma_{i}) \le \delta^{\Sigma}(\sigma_{0},\Sigma_{si}) = \delta^{\Sigma}(\sigma_{0},\bd(\Sigma_{si})).
\end{equation}
To prove the reverse inequality, first note that by Corollary 1 in \citet{canovas2005distance},
\begin{equation}\label{eq:bd_C_equal_i}
\delta^{\Sigma}(\sigma_{0},\Sigma_{i}) = \delta^{\Sigma}(\sigma_{0},\bd(\Sigma_{f})).
\end{equation}
%
%
%
%
%
%
For all $\sigma\in\bd(\Sigma_{f})$, we have $\sigma\notin \Sigma_{\infty}$ by definition. Thus, by Theorem 5 part (iii) from \citet{canovas2005distance}, for all $\sigma\in\bd(\Sigma_{f})$, we have $\sigma\in\bd(\Sigma_{si})$. This means that
\begin{equation}\label{eq:bd_sigmaF_greater_bd_sigmaSi}
\delta^{\Sigma}(\sigma_{0},\bd(\Sigma_{f})) \geq \delta^{\Sigma}(\sigma_{0},\bd(\Sigma_{si})).
\end{equation}
%
%

\hfill $\square$

\end{proof}


\begin{remark} This lemma gives us a way to calculate the quantity $\delta^{\Sigma}(\sigma_{0},\Sigma_{i})$ in a less abstract space.
\end{remark}

\subsection{Proof of Lemma \ref{lem:greaterThanZero}.}\label{app:proof_Lemma_greater_than_zero}

\constantDefinition*

\greaterThanZero*

We prove each of the points individually:
\begin{proof} \pf\

\begin{enumerate}
\item The required statement is equivalent to proving $0_{n+1}\notin \bd(H(\sigma_{0}))$. By Remark \ref{rem:distance_to_Pi_infty}, $\pi_{0}\notin \Pi_{\infty}$, since $\pi_{0}\in\interior(\Pi_{f})$.  Proposition 2(ii) from \citet{canovas2006distance} states that if $\pi_{0}\in\Pi\backslash\Pi_{\infty}$, then $0_{n+1}\in \ext(H(\sigma_{0}))$ if and only if $\pi_{0}\in\interior(\Pi_{f})$. Since $\pi_{0}\in\interior(\Pi_{s})\subseteq \interior(\Pi_{f})$, we immediately have $0_{n+1}\in \ext(H(\sigma_{0}))$.\\

\item The required statement is equivalent to $0_{n}\notin\bd(Z^{-}(\pi_{0}))$. Proposition 3(i) from \citet{canovas2006distance} gives $0_{n+1}\in\interior(Z^{-}(\pi_{0}))$ if and only if $\pi_{0}\in\interior(\Pi_{s})$.\\

\item Theorem 2 from \citet{canovas2006distance} states that if $\pi\in \cl(\Pi_{s})$, then $\delta^{\Pi}(\pi_{0},\bd(\Pi_{s}))=\min\{d(0_{n+1},\bd(H(\sigma_{0}))),d(0_{n},\bd(Z^{-}(\pi_{0}))) \}$. Note that $\pi\in\interior(\Pi_{s})\subseteq\cl(\Pi_{s})$. Thus, by the previous steps, $\delta^{\Pi}(\pi_{0},\bd(\Pi_{s}))>0$.\\

\item Let $\epsilon$ be such that $0 < \epsilon < \delta^{\Pi}(\pi_{0},\bd(\Pi_{s}))$ be given. By assumption, $\pi_{0}\in\Pi_{f}$, so that $\sigma_{0}\in\Sigma_{f}$. Then, by Theorem 2 from \citet{canovas2006distance}, $d(0_{n+1},\bd(H(\sigma_{0}))) \geq \delta^{\Pi}(\pi_{0},\bd(\Pi_{s}))$, noting that $\pi_{0}\in\interior(\Pi_{s})\subseteq\cl(\Pi_{s})$. Lastly, from Lemma \ref{lem:distIllPosed}, we have  $\delta^{\Sigma}(\sigma_{0},\Sigma_{i})= d(0_{n+1},\bd(H(\sigma_{0}))) \geq  \delta^{\Pi}(\pi_{0},\bd(\Pi_{s}))>\epsilon$.\\

\item For the ``in particular'' statement, we need only check that the denominators are non-zero and that $\nu(\pi_{0})$ is finite. Since we are using the Euclidean norm, $d_{*}(0_{n},\bd(Z^{-}(\pi_{0}))) = d(0_{n},\bd(Z^{-}(\pi_{0})))$. Now, $d(0_{n},\bd(Z^{-}(\pi_{0}))) \geq \delta^{\Pi}(\pi_{0},\bd(\Pi_{s})) >\epsilon > 0$ due Theorem 2 from \citet{canovas2006distance}. Also, $\delta^{\Sigma}(\sigma_{0},\Sigma_{i})>\epsilon>0$ due to the fourth statement. Lastly, since $\pi_{0}\in\interior(\Pi_{s})$, the optimal value $\nu(\pi_{0})$ is finite. \hfill $\square$

\end{enumerate}
\end{proof}

\section{Proof of Theorem \ref{thm:Lipschitz_multiple_coeff_cost_RHS_unc}.} \label{app:Proof_Theorem_Lipschitz_Multiple}
\LipschitzMultipleRO*

\begin{proof}
\pf\ Let $T:=I\times \mathbb{R}^{n}\times \mathbb{R}$. Denote the elements $\mb{t}\in T$ as tuples $\mb{t}:=(\alpha,t,s)$ where $\alpha\in I$, $t\in\mathbb{R}^{n}$ and $s\in\mathbb{R}$.

\begin{enumerate}

\item \emph{Write $\RO(\natmbU)$ as the LSIO problem $\pi_{\natmbU}$.}
First, we show that $\pi_{\natmbU}$ and $\RO(\natmbU)$ are equivalent. Note that the cost functions for the two problems are the same, and that $\pi_{\natmbU}$ contains all the constraints of $\RO(\natmbU)$ plus some additional trivial constraints. Thus $\pi_{\natmbU}$ and $\RO(\natmbU)$ have the same feasible solution set, optimal value, and optimal solution sets. Furthermore, since $\RO(\natmbU)$ satisfies the strong Slater condition with Slater constant $\rho$, and since $\langle 0_{n},x\rangle \geq -\rho +\rho = 0$ for all $x$, it follows that $\pi_{\natmbU}$ satisfies the strong Slater condition with Slater constant $\rho$.

Second, we show that $\pi_{\natmbU}\in\interior(\Pi_{s})$. By assumption, $F^{\opt}(\RO(\natmbU))$ is non-empty and bounded, so $F^{\opt}(\pi_{\natmbU})$ is non-empty and bounded. Since $\pi_{\natmbU}$ satisfies the strong Slater condition, by Theorem \ref{thm:lsc_SS_interior}, we have $\pi_{\natmbU}\in\interior(\Pi_{f})$. Thus, by Proposition 1 part (vi) from \citet{canovas2006ill}, $c\in\interior(\mbox{cone}(\{ a_{\mb{t}}\mbox{ : } \mb{t}\in T \}))$, and by part (vii), this implies that $\pi_{\natmbU}\in\interior(\Pi_{s})$.

\item \emph{Choose $\natmbV$ in an $\epsilon$-neighborhood of $\natmbU$.}
Let $\epsilon>0$ be given satisfying $0 < \epsilon< \delta^{\Pi}(\pi_{\natmbU},\bd(\Pi_{s}))$. Such an $\epsilon$ exists because $\delta^{\Pi}(\pi_{\natmbU},\bd(\Pi_{s}))>0$ by Lemma \ref{lem:greaterThanZero}. Let $\natmbV$ be any compact and convex set in $\mathbb{R}^n$ satisfying $d_{\natural}(\natmbU,\natmbV)\leq \epsilon < \delta^{\Pi}(\pi_{\natmbU},\bd(\Pi_{s}))$. Such a $\natmbV$ exists; e.g., the choice $V_{\alpha}:=U_{\alpha}+B(0,\epsilon/2)$ for all $\alpha\in I$ satisfies $d_{\natural}(\natmbU,\natmbV)<\epsilon$.

\item \emph{Define $\pi_{\natmbU;\natmbV}:=(c,\sigma_{\natmbU;\natmbV})$.} Define the LSIO problem $\pi_{\natmbU;\natmbV}:=(c,\sigma_{\natmbU;\natmbV})$ as in Theorem \ref{thm:ROLSIOtransformMultiple}. By the same theorem, $\pi_{\natmbU;\natmbV}$ is well-defined. Since for each $\alpha\in I$, $(t,s)\in V_{\alpha}\backslash U_{\alpha}$, $\argmin_{(u_{a},u_{b})\in U_{\alpha}}d((u_{a},u_{b}),(t,s))$ is an element of $\natmbU$, every constraint in $\pi_{\natmbU;\natmbV}$ is in $\pi_{\natmbU}$ and vice versa.  Thus, $\pi_{\natmbU;\natmbV}\sim_{\Pi}\pi_{\natmbU}$ Furthermore, since $\pi_{\natmbU;\natmbV}\sim_{\Pi}\pi_{\natmbU}$ and $\pi_{\natmbU}$ satisfies the strong Slater condition with the Slater constant $\rho$, it follows that $\pi_{\natmbU;\natmbV}$ also satisfies the strong Slater condition with constant $\rho>0$.

\item  \emph{Define $\pi_{\natmbV;\natmbU}:=(c,\sigma_{\natmbV;\natmbU})$.} Define the LSIO problem $\pi_{\natmbV;\natmbU}:=(c,\sigma_{\natmbV;\natmbU})$ as in Theorem \ref{thm:ROLSIOtransformMultiple}. By the same theorem, $\pi_{\natmbV;\natmbU}$ is well-defined and is equivalent to $\RO(\natmbV)$.

\item \emph{By Theorem \ref{thm:ROLSIOtransformMultiple}, $\delta^{\Pi}(\pi_{\natmbU;\natmbV},\pi_{\natmbV;\natmbU}) = \natD(\natmbU,\natmbV):= \sup_{\alpha\in I}d_{H}(U_{\alpha},V_{\alpha})<\epsilon$.}

\item \emph{Apply Theorem \ref{thm:LipschitzContinuityLSIO} with $\pi_{\natmbU;\natmbV}\mapsto \pi_{0}, \pi_{1}$ and $\pi_{\natmbV;\natmbU}\mapsto \pi_{2}$.} We check that the assumptions of Theorem \ref{thm:LipschitzContinuityLSIO} are satisfied:
\begin{enumerate}
\item \emph{$\pi_{\natmbU;\natmbV}\in \interior(\Pi_{s})$.} This proof is identical to the proof that $\pi_{\natmbU}\in\interior(\Pi_{s})$, given in Step 1.

\item \emph{$\epsilon < \delta^{\Pi}(\pi_{\natmbU;\natmbV},\bd(\Pi_{s}))$.} Recall that $\pi_{\natmbU;\natmbV}$ and $\pi_{\natmbU}$ are both in $\interior(\Pi_{s})$, have non-empty bounded optimal solution sets and are $\Pi$-equivalent to each other. Thus, we have  $\epsilon < \delta^{\Pi}(\pi_{\natmbU},\bd(\Pi_{s})) =\delta^{\Pi}(\pi_{\natmbU;\natmbV},\bd(\Pi_{s}))$, where the inequality is by assumption and the equality is by Lemma \ref{lem:LipschitzInv}.

\item \emph{$\delta^{\Pi}(\pi_{\natmbU;\natmbV},\pi_{\natmbV;\natmbU})\leq\epsilon$ and $\delta^{\Pi}(\pi_{\natmbU;\natmbV},\pi_{\natmbU;\natmbV})=0\leq\epsilon$.} The first inequality comes from Step 5, and the second inequality is trivial.
\end{enumerate}

Thus, the assumptions of Theorem \ref{thm:LipschitzContinuityLSIO} are satisfied, so:
\begin{equation}\label{eq:paper_proof_Lipschitz_pre_single_RO}
| \nu(\pi_{\natmbU;\natmbV}) - \nu(\pi_{\natmbV;\natmbU}) | \leq L(\pi_{\natmbU;\natmbV},\epsilon) \delta^{\Pi}(\pi_{\natmbU;\natmbV},\pi_{\natmbV;\natmbU}),
\end{equation}
where $L(\pi_{\natmbU;\natmbV},\epsilon)$, as defined in Definition \ref{def:constantDefinition}.

\item \emph{Lastly, show that the Lipschitz constant is independent of the choice of $V$.} 
Applying Lemma \ref{lem:LipschitzInv} to $\Pi$-equivalent problems $\pi_{\natmbU}$ and $\pi_{\natmbU;\natmbV}$, it follows that $L(\pi_{\natmbU},\epsilon) = L(\pi_{\natmbU;\natmbV},\epsilon)$. Now define $L(\natmbU,\epsilon):= L(\pi_{\natmbU},\epsilon)$. By recognizing that $\nu(\pi_{\natmbU;\natmbV})=\nu(\RO(\natmbU))$ and $\nu(\pi_{\natmbV;\natmbU})=\nu(\RO(\natmbV))$, and using $\delta^{\Pi}(\pi_{\natmbU;\natmbV},\pi_{\natmbV;\natmbU}) = d_{H}(\natmbU,\natmbV)$, we obtain the final result:
\begin{align*}
| \nu(\mb{RO}(\natmbU)) - \nu(\mb{RO}(\natmbV)) | & = | \nu(\pi_{\natmbU;\natmbV}) - \nu(\pi_{\natmbV;\natmbU}) |,\\
& \hspace{7mm} \text{(By definition of $\pi_{\natmbU;\natmbV}$ and $\pi_{\natmbV;\natmbU}$)}\\
& \leq L(\pi_{\natmbU;\natmbV},\epsilon) \delta^{\Pi}(\pi_{\natmbU;\natmbV},\pi_{\natmbV;\natmbU}),\\
& \hspace{7mm} \text{(By Theorem \ref{thm:LipschitzContinuityLSIO})}\\
& = L(\pi_{\natmbU;\natmbV},\epsilon) d_{H}(\natmbU,\natmbV),\\
& \hspace{7mm} \text{(By Theorem \ref{thm:ROLSIOtransformMultiple})} \\
& = L(\natmbU,\epsilon) d_{H}(\natmbU,\natmbV).\\
& \hspace{7mm} \text{(Since $\pi_{\natmbU;\natmbV}\sim_{\Pi}\pi_{\natmbU}$)}\\
\end{align*}
\hfill $\square$
\end{enumerate}

\end{proof}

\section{Summary of Notation}\label{app:Summary_Notation}

Notation used in this paper is summarized in Table \ref{tab:notationConstants}.

\begin{table}[H]
\centering
\scalebox{0.75}{
\begin{tabular}{l|l|l}
 \multicolumn{1}{c|}{\textbf{Notation}} & \multicolumn{1}{c|}{\textbf{Name}} & \multicolumn{1}{c}{\textbf{Definition}}\\
 \hline
$c$ & cost vector & $c\in\mathbb{R}^{n}$\\
$a$ & coefficient vector & $a:T\to\mathbb{R}^{n}$\\
$b$ & right-hand side & $b: T\to \mathbb{R}$\\
\hline
$T$ & index of LSIO problem & \\
$\pi$ & LSIO problem & $\pi:=(c,(a,b))$\\
$\sigma$ & constraint system & $\sigma:=(a,b)$\\
$\Pi$ & Set of LSIO problems & $\Pi:=\mathbb{R}^{n}\times(\mathbb{R}^{n}\times\mathbb{R})^{T}$\\
$\Sigma$ & Set of constraint systems & $\Sigma:=(\mathbb{R}^{n}\times\mathbb{R})^{T}$\\
\hline
$d(x,y)$ & Dist. b/w points in $\mathbb{R}^{n}$ & \\
$d(x,C)$ & Dist. b/w a point and a set & $\inf_{y\in C}d(x,y)$  \\
\hline
 $e(C,D)$ & Excess of $C$ on $D$ & $\sup_{x\in C}\inf_{y\in D}d(x,y)$\\
$d_{H}(C,D)$ & Hausdorff distance & $\max\left\{ \adjustlimits\sup_{u\in U}\inf_{v\in V} \|u-v\|, \adjustlimits\sup_{v\in V}\inf_{u\in U} \|u-v\|  \right\}$. \\
$\mb{d}_{r}(C,D)$ & & $ \max_{\|x\|\leq \rho}\left| d(x,C) - d(x,D) \right| $\\
$\wh{\mb{d}}_{r}(C,D)$ & & $\inf\left\{  \eta\geq 0 \left| \begin{aligned} C\cap\rho B &\subseteq D+\eta B \\ D\cap\rho B &\subseteq C+\eta B \end{aligned} \right. \right\}:=\max\{ e((C)_{r},D), e((D)_{r},C) \}$.\\
$\natD(\natmbU,\natmbV)$ &  & $\sup_{\alpha\in I} d_{H}(U_{\alpha},V_{\alpha})$ \\

\hline
$\delta_{r}(f_{1},f_{2})$ & & $\mb{d}_{r}(\mathrm{epi} f_{1}, \mathrm{epi} f_{2})$\\

$\wh{\delta}_{r}(f_{1},f_{2})$ & & $\wh{\mb{d}}_{r}(\mathrm{epi} f_{1}, \mathrm{epi} f_{2})$\\

$\wh{\delta}^{+}_{r}(f_{1},f_{2})$ & & $\inf\left\{\eta \geq 0 \left| \begin{aligned}
\min_{B(x,\eta)}f_{2} &\leq \max\{f_{1}(x),-\rho \}+\eta \\ \min_{B(x,\eta)}f_{1} & \leq \max\{f_{2}(x)-\rho \}+\eta  \mbox{ }
\end{aligned}\right. \forall x\in\rho B
\right\}$\\

$\delta^{\Sigma}(\sigma_{1},\sigma_{2})$ & distance b/w $\sigma_{1}$ and $\sigma_{2}$ & $ \delta^{\Sigma}(\sigma_{1},\sigma_{2}):=\sup_{t\in T} \left\| \left(\begin{array}{c} a^{1}(t) \\ b^{1}(t) \end{array} \right) - \left(\begin{array}{c} a^{2}(t) \\ b^{2}(t) \end{array} \right) \right\|$\\

$\delta^{\Pi}(\pi_{1},\pi_{2})$ & distance b/w $\pi_{1}$ and $\pi_{2}$ & $\delta^{\Pi}(\pi_{1},\pi_{2}):= \max\{ \|c^{1}-c^{2}\|, \delta^{\Sigma}(\sigma_{1},\sigma_{2})  \}$\\

$F(\sigma)$ & feasible solution set of $\pi$ &  $F(\sigma) := \{ x\in \mathbb{R}^{n} \mbox{ : } \langle a_{t}, x \rangle \geq b_{t}, \mbox{ } \forall t\in T\}$\\
$\nu(\pi)$ & optimal value of $\pi$ & $\nu(\pi) := \inf_{x\in\mathbb{R}^{n}} \{ \langle c, x \rangle \mbox{ : } x \in F(\sigma) \}$\\
$F^{\opt}(\pi) $ & optimal solution set of $\pi$ & $F^{\opt}(\pi) := \{ x \in \mathbb{R}^{n} \mbox{ : } \langle c , x \rangle = \nu(\pi) \}$\\
$A(\pi)$ & & $A(\pi) := \mathrm{conv}( \{ a_{t} , t\in T \} )$\\
$R(\pi)$ & & $R(\pi) := \{ -b_{t} , t\in T ; \nu(\pi) \}$\\
$Z^{-}(\pi)$ & & $Z^{-}(\pi) := \mathrm{conv}( \{ a_{t} , t\in T ; -c\} )$\\
$H(\sigma)$ & & $H(\sigma) := \mathrm{conv}\left( \left\{ \left( \begin{array}{c} a_{t} \\ b_{t} \end{array} \right)  \right\}_{t\in T} \right) + \left\{ \left( \begin{array}{c} 0_{n} \\ -\mu \end{array} \right) \right\}_{\mu \geq 0}$\\
\hline
$\Sigma_{f}$ & set of feasible $\sigma$ & $\Sigma_{f}:= \{\sigma \in \Sigma \mbox{ : } F(\sigma) \neq \varnothing \}$\\
$\Sigma_{i}$ & set of infeasible $\sigma$ & $\Sigma_{i}:= \Sigma \backslash \Sigma_{f}$\\
$\Sigma_{\infty}$ & &  $\Sigma_{\infty}:=\{\sigma\in \Sigma \mbox{ : }\delta^{\Sigma}(\sigma,\bd(\Sigma_{f}))=+\infty\}$\\
$\Pi_{f}$ & set of feasible $\pi$ & $\Pi_{f} := \{ \pi= (c,\sigma) \in \Pi \mbox{ : } \sigma \in \Sigma_{f}\}$\\
$\Pi_{i}$ & set of infeasible  $\pi$  & $\Pi_{i}:=\Pi\backslash\Pi_{f}$\\
$\Pi_{\infty}$ & &  $\Pi_{\infty}:=\{\pi\in \Pi \mbox{ : }\delta^{\Pi}(\pi,\bd(\Pi_{f}))=+\infty\}$\\
$\Pi_{s}$ & set of solvable  $\pi$  & $\Pi_{s}:= \{ \pi= (c,\sigma) \in \Pi \mbox{ : } F^{\opt}(\pi)\neq \varnothing \}$\\
\hline
$\RO(U)$ & RO with unc set $U$ & $\RO(U):=(c,U,b)$\\
$F(\RO(U))$ & feas set of RO problem & \\
$\nu(\RO(U))$ & opt value of RO problem & \\
$F^{\opt}(\RO(U))$ & opt set of RO problem & \\
$\varphi(\lambda)$ && $\varphi(\lambda)=\varphi_{*}(\lambda) := \sqrt{1+\lambda^{2}}$\\
$\psi(\alpha)$ && $\psi(\alpha) := (1+\alpha)\sqrt{1+\alpha^{2}}$\\
$\widehat{\rho}(\pi_{0})$ && $\widehat{\rho}(\pi_{0}) := \frac{\sup R(\pi_{0}) }{ d_{*}(0_{n},\bd(Z^{-}(\pi_{0}))) }$\\
$\beta(\pi_{0},\epsilon)$ && $\beta(\pi_{0},\epsilon) := \frac{\psi(\widehat{\rho}(\pi_{0}))}{\delta^{\Sigma}(\sigma_{0},\Sigma_{i}) - \epsilon}$\\
$\gamma(\pi_{0},\epsilon)$ && $\gamma(\pi_{0},\epsilon) := \varphi_{*}(0)( \widehat{\rho}(\pi_{0}) + \beta(\pi_{0})\epsilon ) + \| c^{0} \|_{*} \beta(\pi_{0})$\\
$\mu(\pi_{0},\epsilon)$ && $\mu(\pi_{0},\epsilon) := \varphi(0)\frac{ \sup R(\pi_{0}) + \epsilon\max\{1,\gamma(\pi_{0})\}}{ d(0_{n},\bd(Z^{-}(\pi_{0}))) - \epsilon}$\\
$L(\pi_{0},\epsilon)$ & Lipschitz constant & $L(\pi_{0},\epsilon) := \varphi_{*}(0) \left( (\epsilon + \|c^{0}\|)\frac{\psi(\mu(\pi_{0},\epsilon))}{\delta^{\Sigma}(\sigma_{0},\Sigma_{i})-\epsilon} + \mu(\pi_{0},\epsilon)\right)$\\
\end{tabular}
}
\caption{Summary of notation.}  \label{tab:notationConstants}
\end{table}

\section*{Acknowledgments.} This research was funded by an Ontario Graduate Scholarship and a Natural Sciences and Engineering Research Council of Canada grant.


\bibliographystyle{plainnat}
\bibliography{RobustStability} 


\end{document}